\newtheorem{theorem}{Theorem}
\newtheorem{lemma}[theorem]{Lemma}
\newtheorem{proposition}[theorem]{Proposition}
\newtheorem{corollary}[theorem]{Corollary}
\theoremstyle{definition}
\newtheorem{definition}[theorem]{Definition}
\newtheorem{conjecture}[theorem]{Conjecture}
\newtheorem{remark}[theorem]{Remark}
\newcommand{\R}{\mathbb{R}}      
\newcommand{\Z}{\mathbb{Z}}
\newcommand{\N}{\mathbb{N}}
\def\Lk{{\operatorname{Lk}}}
\def\Tw{{\operatorname{Tw}}}
\def\Wr{{\operatorname{Wr}}}
\def\Rib{{\operatorname{Rib}}}
\def\Cr{{\operatorname{Cr}}}
\def\Len{{\operatorname{Len}}}
\newcommand{\mk}{\mathcal{K}}
\newcommand{\um}{\mathcal{U}}
\newcommand{\kwf}{\mathcal{K}_{w,F}}
\newcommand{\kw}{\mathcal{K}_{w}}
\newcommand{\kpwf}{\mathcal{K}'_{w,F}}
\newcommand{\uwf}{\mathcal{U}_{w,F}}
\newcommand{\uw}{\mathcal{U}_{w}}
\newcommand{\Ai}{\frac{\alpha_1}{2}}
\newcommand{\Aii}{\frac{\alpha_2}{2}}
\newcommand{\Aiii}{\frac{\alpha_3}{2}}
\newcommand{\w}{\frac{w}{2}}
\newcommand{\rin}{r_{in}}
\begin{document}

\title{Linking number and folded ribbon unknots}
\author[Denne]{Elizabeth Denne}
\address{Elizabeth Denne: Washington \& Lee University, Department of Mathematics, Lexington VA}
\email[Corresponding author]{dennee@wlu.edu}
\urladdr{https://elizabethdenne.academic.wlu.edu/}
\author[Larsen]{Troy Larsen}
\address{Troy Larsen: Washington \& Lee University}
%\email{larsent22@mail.wlu.edu}
%%
\date{\today}
%\date{}                                           % Activate to display a given date or no date
%%%%%%%
\makeatletter								% This is needed until the AMS updates their amsart style.
\@namedef{subjclassname@2020}{%
  \textup{2020} Mathematics Subject Classification}
\makeatother

\subjclass[2020]{57K10}
\keywords{Knots, unknots, links, folded ribbon knots, ribbonlength, linking number}

%%%%%%%%%
\begin{abstract}
We study Kauffman's model of folded ribbon knots: knots made of a thin strip of paper folded flat in the plane. The folded ribbonlength is the length to width ratio of such a folded ribbon knot. The folded ribbon knot is also a framed knot, and the ribbon linking number is the linking number of the knot and one boundary component of the ribbon. We find the minimum folded ribbonlength for $3$-stick unknots with ribbon linking numbers $\pm1$ and $\pm 3$, and we prove that the minimum folded ribbonlength for $n$-gons with obtuse interior angles is achieved when the $n$-gon is regular. Among other results, we prove that the minimum folded ribbonlength of any folded ribbon unknot which is a topological annulus with ribbon linking number $\pm n$ is bounded from above by~$2n$.
 \end{abstract}

\maketitle

%%%%%%%%%%%%%%%%%%%%%%
\section{Introduction}\label{sect:introduction}

Take a long thin strip of paper, tie a trefoil knot in it, then gently tighten and flatten it. As can be seen in Figure~\ref{fig:trefoil-pentagon}, the boundary of the knot forms a pentagon. This observation is well-known in recreational mathematics \cite{Ashley, John, Wel}. L. Kauffman \cite{Kauf05}  introduced a mathematical model of such a {\em folded ribbon knot}. Kauffman viewed the ribbon as a set of rays parallel to a polygonal knot diagram with the folds acting as mirrors and the over-under information appropriately preserved. An overview of the history of folded ribbon knots can be found in \cite{Den-FRS}. 
\begin{center}
\begin{figure}[htbp]
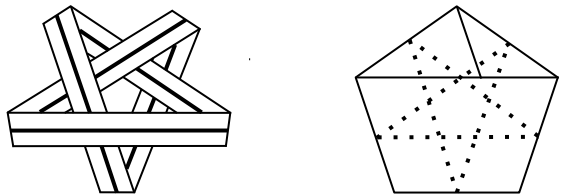

\begin{overpic}{trefoil-pentagon}
\end{overpic}
\caption{On the left a folded ribbon trefoil knot. On the right, the folded trefoil knot has been ``pulled tight'', minimizing the folded ribbonlength. Figure used with permission from \cite{Den-FRC}.}
\label{fig:trefoil-pentagon}
\end{figure}
\end{center}

One of the beautiful things about folded ribbon knots is that, while they have a formal mathematical description (given in Section~\ref{sect:model}), they can also be constructed from strips of paper. We strongly encourage the reader to have strips of paper handy as they read along. Many of the constructions given in later sections will make the most sense when made with a physical model.

For a knot (or link) $\mk$, we denote the corresponding folded ribbon knot as $\kwf$, where $w$ is the width of the ribbon and $F$ is the choice of folds (as described in Section~\ref{sect:model}). We seek to find the {\em folded ribbonlength} $\Rib(\kwf$), the ratio of the length of the knot to the width of the ribbon around it. One of the main open problems in the theory of folded ribbons is to find the minimum folded ribbonlength for any knot or link type $K=[\mk]$. A typical way to attack this problem is to find a particular example of a folded ribbon knot, then calculate the folded ribbonlength. This gives an upper bound of the minimum folded ribbonlength for that knot type.  The majority of the papers written to date have taken that strategy. Here is the summary of what is known about the upper bounds on the folded ribbonlength of various knot types. We have given references followed by the best upper bounds to date. 
\begin{enumerate}
\item Trefoil knot \cite{Den-FRF, Kauf05, KMRT}, with $\Rib(K)\leq 6$ from \cite{Den-FRF}.
\item $(p,q)$-torus link \cite{DeM, Den-FRF, KMRT, Tian}, with $\Rib(K)\leq 2p$ where $p\geq q\geq 2$ from \cite{Den-FRF}.
\item Figure 8 knot \cite{Den-FRF, Kauf05, Tian}, with $\Rib(K)\leq 10$ from \cite{Den-FRF}
\item Twist knot with $n$ twists \cite{Den-FRF, Tian}, with 
$$\Rib(K)\leq \begin{cases} 2n+6  \quad \text{ for $n\leq 8$ from \cite{Den-FRF}}, \\
 \frac{\sqrt{5}+1}{2}n+5+\sqrt{5}+\sqrt{\frac{5+\sqrt{5}}{2}}  \quad\text{ for $n\geq 9$ from \cite{Tian}}.
 \end{cases}$$
\item $2$-bridge knot $K$ \cite{Den-FRF}, with crossing number $\Cr(K)$ has $\Rib(K)\leq  6\Cr(K)-2$.
\item $(p,q,r)$-pretzel knot \cite{Den-FRF}, with $\Rib(K)\leq 2(|p|+|q|+|r|) + 2$.
\end{enumerate}
There is work in-progress \cite{KNY} showing that a $2$-bridge link $K$ with crossing number $\Cr(K)$ has $\Rib(K)\leq 2\Cr(K)+2$. Also, any twisted torus knot $K=T_{p,q;r,s}$ (see \cite{Dean,Lee}) has
$$\Rib(K) \leq \begin{cases} 2(\max\{p,q,r\}+|s|r \quad \text{for $r<p+q$ and $s\neq 0$,}
\\ 2(p+(|s|-1)r) \quad \text{for $r\leq p-q$.}
\end{cases}
$$

Finding lower bounds on folded ribbonlength appears to be harder. In \cite{DKTZ}, E. Denne {\em et al.} make a first pass at this problem for $3$-stick unknots. This work is reviewed at the start of Section~\ref{sect:3-stick}. We make some progress on finding lower bounds in this paper, as described further below.

A second open problem in the theory of folded ribbon knots is to relate the minimum folded ribbonlength of a knot type $K$ to its crossing number $\Cr(K)$. The {\em ribbonlength crossing number problem} asks to find positive constants $c_1, c_2, \alpha, \beta$ such that
\begin{equation*}
c_1\cdot \Cr(K)^\alpha \leq \Rib(K) \leq c_2\cdot \Cr(K)^\beta.
\label{eq:crossing}
\end{equation*}
Y. Diao and R. Kusner \cite{DK} conjecture that $\alpha=\frac{1}{2}$ and $\beta=1$.
Other people \cite{Den-FRC, Den-FRF, KMRT, Tian} have made progress on this problem, including proving $\beta=1$ for many infinite knot families, and $\beta=\frac{3}{2}$ for any knot type. This open problem is not the focus of this paper.  

In this paper, we recognize that folded ribbon knots are framed knots. We seek to minimize folded ribbonlength while respecting the knot type, the framing of the folded ribbon knot, and the topological type of the folded ribbon knot. That is, with respect to {\em ribbon equivalence}. This is a challenging problem, and in this paper we make a start by working with unknots.

In Section~\ref{sect:defn}, we remind the reader of the formal definition of a folded ribbon knot and folded ribbonlength. We also review the local geometry of a fold in a folded ribbon knot. The ribbonlength of the pieces of ribbon in a fold found in Proposition~\ref{prop:extended-fold} and Corollary~\ref{cor:triangles} are used extensively.

We formally define {\em ribbon equivalence} in Section~\ref{sect:ribbon-equivalence}. In particular, we keep track of the framing of the folded ribbon knot by computing the {\em ribbon linking number}. This is the linking number between the knot and one boundary component of the ribbon. There are a few shortcuts that we have developed for computing ribbon linking number, and these can be found in Section~\ref{sect:linking-number}. Of note is Lemma~\ref{lem:fold-compute}, where we give the sign that any given fold contributes to the ribbon linking number.  We use this insight together with the minimum folded ribbonlength needed to create a fold (from Corollary~\ref{cor:triangles}) to give our first lower bound on the minimum folded ribbonlength in Theorem~\ref{thm:RibLowerBd}:  suppose the folded ribbon knot $\kwf$ has writhe $\Wr(\kwf) =0$ and ribbon linking number $\Lk(\kwf)=\pm n$. Then the minimum folded ribbonlength of such a folded ribbon knot is bounded from below as follows:
\begin{compactenum} 
\item $2n \leq \Rib([\kwf])$  \ \  when $\kwf$ is a topological annulus,
\item $n \leq \Rib([\kwf])$ \ \ when $\kwf$ is a M\"obius band. 
\end{compactenum} 
We finish Section~\ref{sect:linking-number} by computing the possible ribbon linking numbers of folded ribbon unknots whose  knot diagrams are convex $n$-gons. In Proposition~\ref{prop:n-linking}, we prove that the ribbon linking number is a complete invariant for these unknots. In other words, the folding information determines the ribbon linking number, and the ribbon linking number determines the folding information (up to permutation among the vertices). 

In Section~\ref{sect:3-stick}, we work exclusively with $3$-stick unknots. We build on older work from \cite{DKTZ}  and prove that the minimum folded ribbonlength occurs when the unknot is an equilateral triangle. In this case the folded ribbon unknot is a M\"obius band and either has ribbon linking number $\pm 1$ (Corollary~\ref{cor:3-stick-1}) or ribbon linking number $\pm 3$ (Theorem~\ref{thm:3-stick-3}). In Section~\ref{sect:n-stick}, we  generalize these arguments to folded ribbon $n$-stick unknots. The nature of the geometry of the folds lead us to restrict our attention to $n$-stick unknots whose fold angles $\alpha_i$ satisfy $\frac{\pi}{2}\leq \alpha_i <\pi$. In Corollary~\ref{cor:minngon}, we prove that the minimum folded ribbonlength occurs when the unknot is a regular $n$-gon. Knowing the folded ribbonlength of a regular $n$-gon and the possible ribbon linking numbers (Proposition~\ref{prop:n-linking}) allows us, in Theorem~\ref{thm:n-unknot1},
 to find upper bounds on the minimum folded ribbonlength for folded ribbon unknots which are M\"obius bands and have ribbon linking number $\pm n$.
 
In Section~\ref{sect:unknots}, we only consider folded ribbon unknots which are topological annuli. We broaden our view and look at unknots whose knot diagrams are not convex and not regular. Our main result is Theorem~\ref{thm:linking-n}, which states that there is a folded ribbon unknot $\uwf$ which is a topological annulus,  such that
\begin{compactenum}
\item for any $n\in \N$, the knot diagram $\um$ has $(2n+2)$ sticks,
\item $\uwf$ has ribbon linking number $\Lk(\uwf)=\pm n$, and 
\item $\uwf$ has folded ribbonlength $\Rib(\uwf) = 2n$.
\end{compactenum}
In our final result, we consider folded ribbon knots where the knot $\mk$ is non-trivial. We create a connected sum between the knot $\mk$ and and unknot $\um$ from Theorem~\ref{thm:linking-n} which has any desired ribbon linking number. This allows us to find an upper bound on the minimum folded ribbonlength for a non-trivial knot whose corresponding folded ribbon has any ribbon linking number.

%%%%%%%%%%%%%%%%%%%%%%%%%%%
%%%%%%%%%%%%%%%%%%%%%%%%%%%%%%
\section{Definitions and Background}\label{sect:defn}

\subsection{Modeling folded ribbon knots}\label{sect:model}

Following knot theory texts \cite{Adams, Crom, Liv}, we define a {\em knot} $\mk$ to be a simple closed curve in $\R^3$ that is equivalent to a polygonal knot.  Two knots are equivalent if they are ambient isotopic, and the equivalence class of a knot $\mk$ is denoted by $K=[\mk]$. A {\em link} is a finite disjoint union of knots. The definitions in this paper also hold for links.  A {\em polygonal knot} $\mk$ has a finite number of vertices $v_1,\dots, v_n$ and edges $e_1=[v_1,v_2], \dots, e_n=[v_n,v_1]$.  If $\mk$ is oriented, we assume that the numbering of the vertices follows the orientation. Sometimes the edges of a polygonal knot are called {\em sticks}. 
A {\em polygonal knot diagram} is a projection of a polygonal knot to a plane, where the crossing information of overlapping strands has been preserved. This is usually depicted at a crossing where the ``lower strand" has a break at the intersection.

Formally, a {\em folded ribbon knot of width $w$}, denoted $\kw$, is a piecewise linear immersion of an annulus or M\"obius band into the plane, where the fold lines are the only singularities, and where the crossing information is consistent. A detailed description can be found in  \cite{Den-FRS, Den-FRF, DKTZ}.  We observe that a folded ribbon knot has two kinds of double points in the plane. The first are those near crossings of the knot diagram,  and the second are those near the fold lines of the ribbon. Specifically, we define a {\em fold} to be a connected component of the set of double points which lifts to a single component containing the fold line in the preimage. For example,  on the left in Figure~\ref{fig:folding-info}, the fold is the set of double points found in triangle $\Delta ABC$.  We also observe that at each fold, there is a choice of which piece of the ribbon lies over the other. Given an orientation of $\kw$, we define an {\em underfold} and {\em overfold} as shown in Figure~\ref{fig:folding-info}. Altogether, these choices give the {\em folding information} $F$ of a folded ribbon knot. 

\begin{figure}[htbp]
\begin{center}
\begin{overpic}{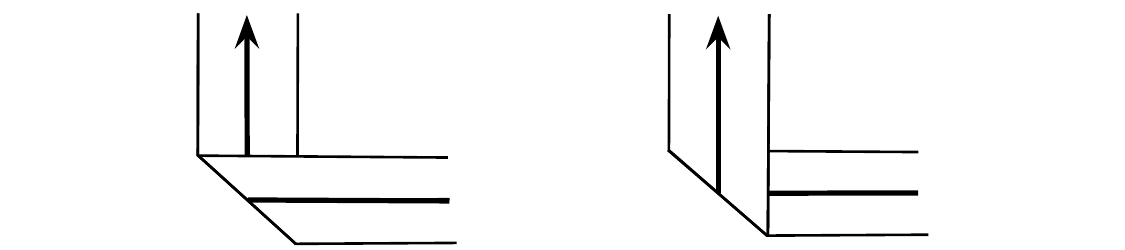}
\put (14.5, 8){$A$}
\put(27, 9){$B$}
\put(25, -2.5){$C$}
\put(36, 6){$e_{i-1}$}
\put(23, 20){$e_{i}$}
\put(18, 3){$v_i$}

\put(76, 6.5){$e_{i-1}$}
\put(65, 20){$e_{i}$}
\put(60, 4){$v_i$}
\end{overpic}
\caption{A right underfold (left) and a right overfold (right). Figure re-used with permission from~\cite{DKTZ}.}
\label{fig:folding-info}
\end{center}
\end{figure}

There is an alternate view of a folded ribbon knot $\kwf$ which accounts for the folding information explicitly. We start by by taking a particular polygonal knot diagram $\mk$ and adding a ribbon of width $w$ about $\mk$ such that the folds at each vertex have folding information~$F$. Here, the boundary of the ribbon is equidistant from and parallel to $\mk$, and each fold line is perpendicular to the angle bisector of the angle at the vertex. %The fold lines each have length $\frac{w}{\cos(\alpha_i/2)}$, where $\alpha_i$ is the fold angle at each vertex $v_i$
In this case, we will refer to $\kwf$ as the folded ribbon knot {\em corresponding} to $\mk$. Following  Definition~2.7 in \cite{DKTZ}, we assume that the folded ribbon knot corresponding to $\mk$ has no singularities (is immersed), except at the fold lines which are assumed to be disjoint. We also assume that $\kwf$ has consistent crossing information, and this agrees with the folding information given by $F$, and
 with the crossing information from $\mk$.  We proved in \cite{Den-FRF} that all polygonal knot diagrams have such a folded ribbon knot for small enough widths.
 
As a simple example of these ideas, take a strip of paper and join the ends to create an annulus. Now flatten to create two folds: this is a folded ribbon unknot. This folded ribbon unknot corresponds to the knot diagram with two edges or sticks, where we remember that one stick is over the other.  This example fits our narrative since immersions are locally one-to-one. It also serves to remind us that the polygonal knot diagrams of folded ribbon knots need not be regular.\footnote{A  polygonal knot diagram is {\em regular} (see \cite{Liv}) provided that no three points on the knot project to the same point and no vertex projects to the same point as any other point on the knot.} 
%%%%%%%%%%

\subsection{Diagram Equivalence and Ribbonlength}\label{sect:ribbonlength}

  Given our understanding of a folded ribbon knot, it is natural to wonder when two folded ribbon knots are equivalent.  The most obvious way to do so is to ignore the folded ribbon altogether and only consider the knot type.
  
\begin{definition}[\cite{DKTZ} Definition 3.4] Two folded ribbon knots are {\em diagram equivalent} if they have equivalent knot diagrams.
\end{definition}

There are other ways to define ribbon equivalence, and we discuss these in Section~\ref{sect:ribbon-equivalence}.
A natural question is to try to find the least length of paper needed to tie a folded ribbon knot. We therefore define a scale-invariant quantity called {\em folded ribbonlength}.

\begin{definition}[\cite{Kauf05}] The {\em folded ribbonlength} of a particular folded ribbon knot $\kwf$  of width $w$  and folding information $F$ is the quotient of the length of $\mk$ to the width $w$:
$$ \Rib(\mathcal{K}_{w,F}) := \frac{\Len(\mathcal{K})}{w}.$$
\end{definition}

The {\em (folded) ribbonlength problem} asks us to ``minimize'' the folded ribbonlength of a folded ribbon knot in its knot type; that is, with respect to diagram equivalence. For example,  consider the folded ribbon unknot with a knot diagram $\um$ consisting of just two edges.  After setting the width $w=1$, we see that $\Len(\um)\rightarrow 0$. We say the (minimum) ribbonlength of the unknot is $0$. More formally, we define:

\begin{definition}
The {\em folded ribbonlength} of a knot (or link) type $K$ is defined to be
$$\Rib(K)= \Rib([\mk]) = \inf_{\mk\in[\mk]} \Rib(\kwf).$$
\end{definition}

We can easily find upper bounds for the folded ribbonlength of a knot type $K=[\mk]$. Specifically, we can set width $w=1$ and find the length of a knot diagram $\mk$ of a folded ribbon knot $\kw$. This computation then gives the folded ribbonlength for $\kw$, and an upper bound on the folded ribbonlength for $[\mk]$. The majority of the papers written to date fall into this context, see for instance \cite{DeM,  Den-FRS, Den-FRC,
 Kauf05, KMRT, Tian}.  In this paper, we aim to minimize ribbonlength with respect to ribbon equivalence (defined in Section~\ref{sect:ribbon-equivalence}).

%%%%%%%%
\subsection{Local geometry of folded ribbons}\label{sect:useful-geom}

To help with our later computations of folded ribbonlength, we now provide a more detailed description of the geometry of a fold.  In Figure~\ref{fig:AcuteVsObtuse}, we see two folds (the double points in $\Delta ABF$). Their exact geometry depends on whether the fold angle $\theta$ is acute or obtuse.  We have oriented the knot diagram and ribbon in Figure~\ref{fig:AcuteVsObtuse} so that in both cases the fold $\Delta ABF$ is a left underfold. The {\em fold angle} $\theta:=\angle HCD$ at vertex $C$ is shown.\footnote{Following Definition 2.4 in  \cite{DKTZ}, we assume that the fold angle is the angle $\theta$ at a vertex $v_i$ between adjacent edges $e_{i-1}$ and $e_i$, where $0\leq \theta \leq \pi$. The angle is positive when $e_i$ is to the left of $e_{i-1}$ and is negative when $e_i$ is to the right.} Since we use the knot diagram to construct our folded ribbon knot, we can talk about the pieces of the ribbon that correspond to particular parts of the knot diagram.

\begin{center}
\begin{figure}[htbp]
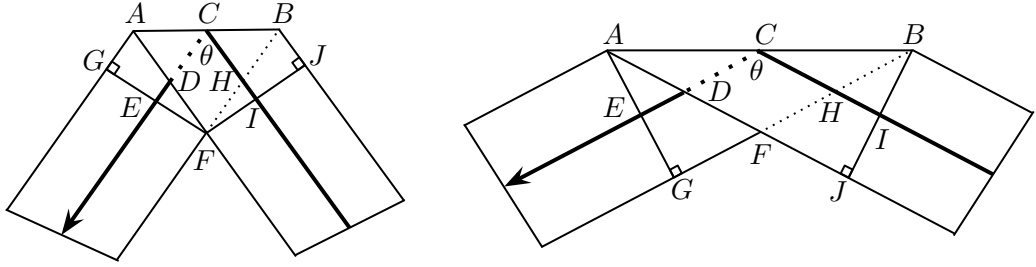

\begin{overpic}{AcuteVsObtuse}
\put(12,23){$A$}
\put(19,23){$C$}
\put(19.15, 19){$\theta$}
\put(26,23){$B$}
\put(17,16.5){$D$}
\put(11.5,14){$E$}
\put(18.5,9){$F$}
\put(8,18.5){$G$}
\put(20,16.5){$H$}
\put(23.5, 13){$I$}
\put(29.5, 19){$J$}
\put(57.5,21){$A$}
\put(72,21){$C$}
\put(71.5, 17.5){$\theta$}
\put(86,21){$B$}
\put(67.5,15.5){$D$}
\put(57.5,14){$E$}
\put(71.5,9.5){$F$}
\put(64,6.25){$G$}
\put(78,13.5){$H$}
\put(83.4,11){$I$}
\put(79, 6){$J$}
\end{overpic}
\caption{The fold from angle $\theta$ consists of two overlapping triangles of ribbon, which correspond to the part of the knot diagram $DC+CH$. Similarly, the  extended fold from angle $\theta$ is the two pieces of ribbon determined by $EC+CI$. Figure re-used with permission from \cite{Den-FRC}.}
\label{fig:AcuteVsObtuse}
\end{figure}
\end{center}

\begin{definition} [\cite{Den-FRF} Definition 9] Assume that the fold angle $\theta$ satisfies $0<\theta < \pi$. Using the notation in Figure~\ref{fig:AcuteVsObtuse}, we define the {\em fold from angle $\theta$} to be the two identical overlapping triangles of ribbon, determined by the knot diagram $DC+CH$. The {\em extended fold from angle $\theta$} is defined to to be the two congruent pieces of ribbon determined by  $EC+CI$. 
\label{def:extended-fold}
\end{definition}

\begin{remark} \label{rmk:fold-angle} Why the restriction on the fold angle $\theta$? Firstly, there is no fold when $\theta=\pi$. The other extreme case occurs when $\theta=0$. Here,  the fold consists of two overlapping rectangles or trapezoids of ribbon.  We saw this kind of fold previously in the $2$-stick unknot case. 
We also note the special case when the fold angle $\theta =\frac{\pi}{2}$.  This angle is the transition point between the acute and obtuse folds shown in Figure~\ref{fig:AcuteVsObtuse}.   In this case, the fold and the extended fold are identical. 
\label{rmk:extended-fold}
\end{remark}

We can use basic trigonometry and the geometry of a folded ribbon (explained fully in \cite{Den-FRF}) to compute the ribbonlength of a fold  and of an extended fold.

\begin{proposition}[\cite{Den-FRF} Corollary 18] \label{prop:extended-fold}
Using the notation from Figure~\ref{fig:AcuteVsObtuse}, we see that the ribbonlength
\begin{compactenum} 
\item  of a fold from angle $\theta$ is 
$$\Rib(|DC|+|CH|)= \frac{1}{\sin\theta};$$
\item of an extended fold from angle $\theta$ is 
$$\Rib(|EC|+|CI|)=\begin{cases} \cot(\frac{\theta}{2})
 & \text{ when $0<\theta\leq \frac{\pi}{2}$,} \\
\cot(\frac{\pi}{2}-\frac{\theta}{2})& \text{when $\frac{\pi}{2}\leq \theta< \pi$.}
\end{cases}
$$
\end{compactenum}
\end{proposition}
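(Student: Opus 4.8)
The plan is to reduce the proposition to elementary right-triangle trigonometry once the geometry of the overlapping region is pinned down. First I would place the vertex $C$ at the origin with the angle bisector along the vertical axis, so that the two edge-segments $CD$ and $CH$ of the knot diagram each make angle $\theta/2$ with the bisector and the fold line (which is perpendicular to the bisector, by the construction in Section~\ref{sect:model}) is horizontal through $C$. The two pieces of ribbon meeting at the fold are the width-$w$ strips centered on these two edges; since their centerlines cross at angle $\theta$, the intersection of the two strips is a rhombus centered at $C$, and the fold $\Delta ABF$ is the half of this rhombus lying on the ribbon side of the fold line, with $A,B$ the rhombus corners on the fold line and $F$ the apex on the bisector. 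By the reflection symmetry across the bisector, $\mk$ it suffices to analyze the edge $CD$ and then double the length obtained, since $|DC|=|CH|$ and $|EC|=|CI|$.

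For part (1), I would locate $D$ as the point where the centerline $CD$ crosses the boundary of the \emph{opposite} ribbon, i.e.\ the side $AF$ of the fold. A point at distance $t$ from $C$ along one edge has perpendicular distance $t\sin\theta$ to the line carrying the other edge; setting this equal to the half-width, $t\sin\theta = w/2$, gives $|DC| = \frac{w}{2\sin\theta}$. Hence $|DC|+|CH| = \frac{w}{\sin\theta}$, and dividing by $w$ yields $\Rib(|DC|+|CH|)=\frac{1}{\sin\theta}$.

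For part (2), the extended fold is the shortest sub-ribbon along the two edges whose perpendicular cross-sections sweep out all of $\Delta ABF$, so $|CE|$ equals the projection onto edge $CD$ of whichever vertex of $\Delta ABF$ lies farthest from $C$ in the edge direction. There are exactly two candidates. The apex $F$ on the bisector has $|CF|=\frac{w}{2\sin(\theta/2)}$ (from $|CF|\sin(\theta/2)=w/2$), with projection $|CF|\cos(\theta/2)=\frac{w}{2}\cot\frac{\theta}{2}$; the corner $A$ on the fold line has $|CA|=\frac{w}{2\cos(\theta/2)}$ (from $|CA|\cos(\theta/2)=w/2$), with projection $|CA|\sin(\theta/2)=\frac{w}{2}\tan\frac{\theta}{2}$. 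Comparing $\cot\frac{\theta}{2}$ with $\tan\frac{\theta}{2}$ shows the apex $F$ governs when $0<\theta\le\frac{\pi}{2}$ and the corner $A$ governs when $\frac{\pi}{2}\le\theta<\pi$; doubling and dividing by $w$ then gives $\cot\frac{\theta}{2}$ and $\tan\frac{\theta}{2}=\cot\!\left(\frac{\pi}{2}-\frac{\theta}{2}\right)$ respectively, the two expressions agreeing at $\theta=\frac{\pi}{2}$ in accordance with Remark~\ref{rmk:extended-fold}.

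The trigonometry itself is routine; the one genuinely delicate point is the case split in part (2), namely recognizing that the vertex of the fold triangle controlling the extent of the extended fold switches from the apex $F$ to the crease-corner $A$ exactly as $\theta$ passes through $\frac{\pi}{2}$. Establishing the rhombus vertices carefully, so that each perpendicular-distance computation is valid and the correct extremal vertex is selected in each range of $\theta$, is the main thing to get right; everything else follows from the bisector symmetry together with the three right triangles at $D$, $F$, and $A$.
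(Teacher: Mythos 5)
Your proof is correct. The paper does not actually prove Proposition~\ref{prop:extended-fold} — it imports it from \cite{Den-FRF} as precisely the kind of ``basic trigonometry and geometry of a folded ribbon'' computation you carried out — and your derivation (the rhombus-intersection picture, locating $D$ where the knot diagram exits the opposite strip so that $|DC|\sin\theta = w/2$, and locating $E$ via the extremal projection of the fold triangle's vertices, with the controlling vertex switching from the apex to the fold-line corner exactly at $\theta=\tfrac{\pi}{2}$) is essentially the intended argument, consistent with the fold/extended-fold geometry the paper describes in Definition~\ref{def:extended-fold} and in the proof of Lemma~\ref{lem:fold-meet}.
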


\begin{corollary}[\cite{Den-FRF} Corollary 19] \label{cor:triangles}
The minimum folded ribbonlength needed to create a fold (or an extended fold) is $1$, and occurs when $\theta=\frac{\pi}{2}$. In this case the fold (or extended fold) consists of two identical right isosceles triangles whose equal sides have length $w$. 
\end{corollary}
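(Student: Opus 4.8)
The plan is to read this off directly from the two ribbonlength formulas supplied by Proposition~\ref{prop:extended-fold}: since those formulas are already established, the corollary reduces to a one-variable minimization of each formula over the admissible range $0<\theta<\pi$, followed by a short geometric identification of the configuration at the optimal angle. I would treat the fold and the extended fold separately, note that both minima occur at $\theta=\frac{\pi}{2}$ with value $1$, and then describe the shape there.

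First I would handle the fold, whose ribbonlength is $\frac{1}{\sin\theta}$. On $(0,\pi)$ we have $0<\sin\theta\le 1$, with equality precisely at $\theta=\frac{\pi}{2}$, so $\frac{1}{\sin\theta}\ge 1$ with equality exactly there. (Equivalently, $\frac{d}{d\theta}\!\left(\frac{1}{\sin\theta}\right)=-\frac{\cos\theta}{\sin^2\theta}$ vanishes only at $\theta=\frac{\pi}{2}$, and the sign change of $\cos\theta$ confirms a strict minimum.) This gives the fold value $1$ attained uniquely at $\theta=\frac{\pi}{2}$.

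Next I would handle the extended fold, where the only real care is the piecewise definition. On $(0,\frac{\pi}{2}]$ the ribbonlength is $\cot(\frac{\theta}{2})$; as $\theta$ increases, $\frac{\theta}{2}$ increases through $(0,\frac{\pi}{4}]$, and since $\cot$ is decreasing on $(0,\pi)$ this branch decreases to its minimum $\cot(\frac{\pi}{4})=1$ at $\theta=\frac{\pi}{2}$, blowing up as $\theta\to 0^+$. On $[\frac{\pi}{2},\pi)$ the ribbonlength is $\cot(\frac{\pi}{2}-\frac{\theta}{2})$; now $\frac{\pi}{2}-\frac{\theta}{2}$ decreases through $(0,\frac{\pi}{4}]$ as $\theta$ increases, so this branch increases away from its value $\cot(\frac{\pi}{4})=1$ at $\theta=\frac{\pi}{2}$, blowing up as $\theta\to\pi^-$. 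The two branches agree at $\theta=\frac{\pi}{2}$, so the extended fold ribbonlength is continuous and likewise has minimum $1$ attained uniquely at $\theta=\frac{\pi}{2}$.

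Finally, for the geometric statement I would invoke Remark~\ref{rmk:extended-fold}, which tells us the fold and extended fold coincide at $\theta=\frac{\pi}{2}$, and then substitute $\theta=\frac{\pi}{2}$ into the construction of Figure~\ref{fig:AcuteVsObtuse}: the two edges meet at a right angle, the fold line is perpendicular to their bisector, and each of the two overlapping ribbon pieces is a right isosceles triangle whose equal sides have length equal to the ribbon width $w$. The calculus is routine; the one step I would be careful with is this geometric identification. Rather than inferring the shape merely from the value $\Rib=1$, I would verify it honestly, either by placing $C$ at the origin with the two edges symmetric about the vertical bisector and computing the vertices of $\Delta ABF$ at $\theta=\frac{\pi}{2}$, or by specializing the explicit parametrization of these triangles used in the proof of Proposition~\ref{prop:extended-fold} in \cite{Den-FRF}, so that the leg length $w$ and the right angle come out exactly.
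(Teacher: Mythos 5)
Your proposal is correct and follows essentially the same route as the paper: the corollary is exactly the minimization of the two formulas in Proposition~\ref{prop:extended-fold} over $0<\theta<\pi$, with the minimum $1$ attained at $\theta=\frac{\pi}{2}$, where Remark~\ref{rmk:extended-fold} makes the fold and extended fold coincide as two right isosceles triangles with legs of length $w$. Your extra care in verifying the triangle geometry directly (rather than inferring it from the value $\Rib=1$) is sound but not a departure from the paper's argument.
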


%%%%%%%%%%%
%%%%%%%%%%%
\section{Folded ribbon equivalence}\label{sect:ribbon-equivalence}
%%%%%%%%%%%
We have already seen that one way to define ribbon equivalence is to only track the knot type and ignore the folded ribbon. 
Another way to understand folded ribbon equivalence is to keep track of the folded ribbon as well as the knot type. We start by reviewing the discussions found in \cite{Den-FRS,  Den-FRF, DKTZ}.  

Since an oriented folded ribbon knot can be viewed as a framed knot, we can compute the linking number between the knot diagram and one boundary component.  Recall that the {\em linking number} is a link invariant used to determine the degree to which components of a link are entangled.  Given an oriented two-component link $L=A\cup B$, the linking number  $\Lk(A,B)$ is an integer which is defined to be one half the sum of $+1$ crossings and $-1$ crossings between $A$ and $B$ (see Figure~\ref{crossingvalue}). 

\begin{figure}[htbp]
\begin{center}
\begin{overpic}{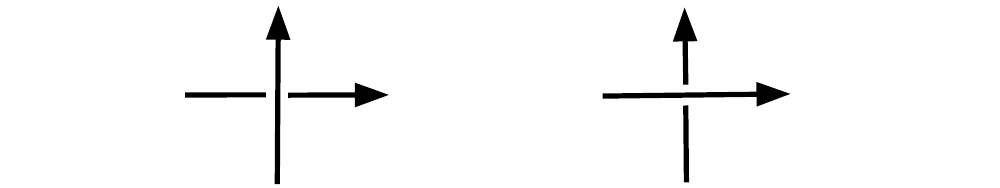}
\end{overpic}
\end{center}
\caption{The crossing on the left is labelled $-1$, the crossing on the right $+1$. Figure re-used with permission from~\cite{DKTZ}.}
\label{crossingvalue}
\end{figure}

 \begin{definition} [\cite{DKTZ} Definition 3.1] \label{ribbonlinkingnumber}
 Given an oriented folded ribbon knot $\kwf$, we define the {\em ribbon linking number} to be the linking number between the knot diagram and one boundary component of the ribbon.  We denote this as $\Lk(\kwf)$.
 \end{definition}
 
 \begin{definition} [\cite{Den-FRF} Definition 13] Two oriented folded ribbon knots (or links) are {\em (folded) ribbon equivalent} if 
 \begin{compactenum}
 \item they are diagram equivalent, 
 \item have the same ribbon linking number, and 
 \item are both topologically equivalent either to a M\"obius band or an annulus when considered as ribbons in $\R^3$.
 \end{compactenum}
\end{definition}

We remind the reader that if a polygonal knot diagram has an even number of edges, then the corresponding folded ribbon knot is a topological annulus. If a polygonal knot diagram has an odd number of edges, then the corresponding folded ribbon knot is a M\"obius band.

\begin{figure}[htbp]
\begin{center}
\begin{overpic}{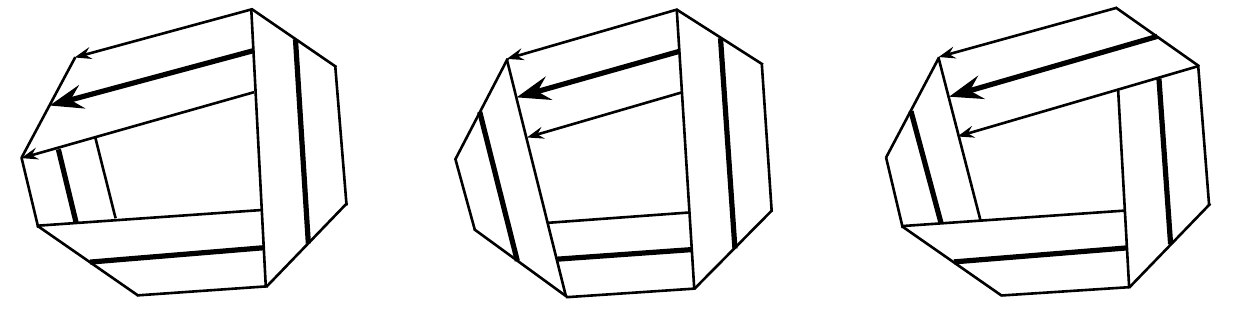}
\end{overpic}
 \caption{Three different folded ribbon 4-stick unknots. The left and center ribbon unknots have ribbon linking number 0, while the one on the right has ribbon linking number $-2$. Figure re-used with permission from \cite{DKTZ}. } 
\label{fig:4unknot}
\end{center}
\end{figure}

Some examples of ribbon equivalence are shown in Figure~\ref{fig:4unknot}. The left and center folded ribbon unknots are ribbon equivalent (with ribbon linking number 0), while the one on the right is not ribbon equivalent to them (with ribbon linking number $-2$). This example shows that there can be different looking folded ribbon knots with the same ribbon linking number. 
 
The rest of this paper is dedicated to finding folded ribbonlength with respect to {\em ribbon equivalence}.  We start by considering unknots. The $2$-stick unknot is a topological annulus with ribbon linking number 0. In this case, the ribbonlength of the unknot with respect to ribbon equivalence is also 0.  We will consider the folded ribbonlength of unknots with other ribbon linking numbers and which are M\"obius bands in the rest of this paper. Before we do that, we introduce some new notation to clearly describe these cases. 

\begin{definition}
Let  $\mk\in[\mk]$, and suppose that the corresponding folded ribbon knot $\kwf$ is of topological type $T$ and has ribbon linking number $\Lk(\kwf)=k$. Then, the {\em minimum folded ribbonlength} of such a folded ribbon knot $\kwf$ is defined to be
$$ \Rib([\kwf]) = \inf_{\substack{\kwf \in T,\\ \Lk(\kwf)=k}} \Rib(\kwf).$$
\end{definition}

In Section~\ref{sect:3-stick}, we find the minimal folded ribbonlength for 3-stick unknots whose corresponding folded ribbons are topological M\"obius bands. In Section~\ref{sect:n-stick}, we give upper bounds on folded ribbonlength of folded ribbon unknots with any ribbon linking number by considering unknots whose diagrams have obtuse interior angles.
In Section~\ref{sect:unknots}, we vastly improve these upper bounds by looking at unknot diagrams which are not regular.

%%%%%%%%%%%%%%%%%%%%%%%%%%%%%%%%%

\section{Computing ribbon linking number}\label{sect:linking-number}

Since we are interested in finding folded ribbonlength with respect to ribbon equivalence, we need to be able to easily find the ribbon linking number. This section contains a number of observations and shortcuts to help with this computation. In addition, we discuss the relationship between the ribbon linking number, twist, and writhe of folded ribbon knots. In Theorem~\ref{thm:RibLowerBd}, we use our knowledge of folds to give lower bounds on folded ribbonlength for folded ribbon knots with zero writhe. We end by showing that ribbon linking number is a complete invariant for folded ribbon unknots whose diagrams are non-degenerate convex $n$-gons. Specifically, in Proposition~\ref{prop:n-linking} we show that for convex $n$-gons, the ribbon linking number determines the folding information and vice versa. 

To start, we observe that the crossings between the ribbon's boundary and the knot diagram usually only happen in two places: at a crossing of the knot diagram, or at a fold.  Figure~\ref{fig:intersections-fc} illustrates some examples of these intersections. At a crossing of the knot diagram (Figure~\ref{fig:intersections-fc} right) we see that the linking number at the crossings between the boundary component(s) of the ribbon and the knot diagram are the same as the sign of the crossing of the knot diagram. At a fold, the sign of the crossings are also straightforward to compute, and we deduce the following lemma.

\begin{figure}[htbp]
\begin{center}
\begin{overpic}{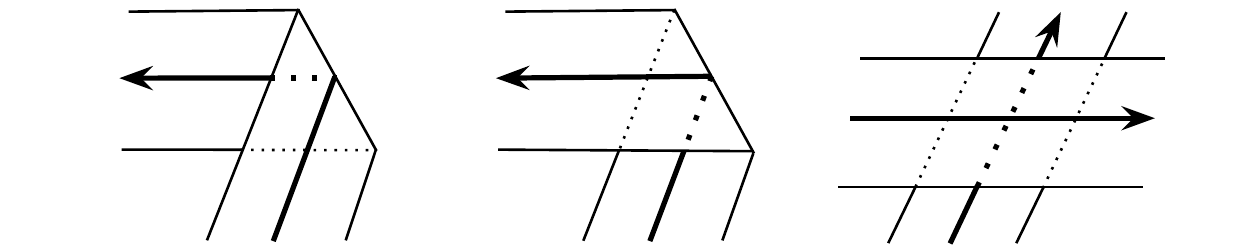}
\put(19.5,5.5){$+1$}
\put(18,14.5){$+1$}
\put(47.75,14.5){$-1$}
\put(49.5,5.5){$-1$}
\put(79,15.75){$+1$}
\put(70.5,7.75){$+1$}
\put(73,2.5){$+1$}
\put(85.5,7.75){$+1$}
\end{overpic}
 \caption{The values of the linking number between the ribbon's boundary and the knot diagram. From left to right: a left underfold, a left overfold, and a crossing. 
 } 
\label{fig:intersections-fc}
\end{center}
\end{figure}

\begin{lemma} \label{lem:fold-compute}
Let $\kwf$ be an oriented folded ribbon knot. At a fold with fold angle $0<\theta< \pi$, there are two intersection points between the ribbon's boundary and the knot diagram. These intersection points both contribute the same sign to the ribbon linking number as follows:
\begin{compactitem}
\item left underfold $+1$,
\item left overfold $-1$,
\item right underfold $-1$,
\item right overfold $+1$.
\end{compactitem}

\end{lemma}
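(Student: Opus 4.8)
The plan is to reduce the claim to a purely local computation at a single fold, since the lemma concerns only the two intersection points arising from one fold with fold angle $0<\theta<\pi$. First I would set up the local picture exactly as in Figure~\ref{fig:folding-info} and Figure~\ref{fig:AcuteVsObtuse}: fix an orientation of the knot diagram $\mk$, so that at the vertex $v_i$ the incoming edge $e_{i-1}$ and outgoing edge $e_i$ meet at the fold angle $\theta$. The ribbon boundary consists of two curves parallel to $\mk$ at distance $\w$ on either side; at the fold, one of these boundary curves must cross the knot diagram (the core), producing precisely the two intersection points asserted. The goal is then to determine the sign each crossing contributes to $\Lk(\mk, \partial)$ according to the sign conventions in Figure~\ref{crossingvalue}.

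The key steps, in order, are as follows. \textbf{Step 1:} Verify that there are exactly two such intersection points. Because the boundary is parallel to the core away from the fold and the fold line is perpendicular to the angle bisector, the relevant boundary strand enters the fold triangle, crosses the core twice (once near each of the two edges $e_{i-1}$ and $e_i$), and exits; the restriction $0<\theta<\pi$ guarantees a genuine fold (by Remark~\ref{rmk:fold-angle}) and hence exactly two transverse crossings. \textbf{Step 2:} For a fixed handedness (say a left fold, where $e_i$ lies to the left of $e_{i-1}$ so $\theta>0$), read off the relative orientations of the boundary strand and the core at each of the two crossings, and combine with the over/under data supplied by $F$. The distinction between an underfold and an overfold is exactly the choice of which sheet of ribbon lies on top at the fold (Figure~\ref{fig:folding-info}), and this flips which strand is the overstrand at the crossing, hence flips the crossing sign. \textbf{Step 3:} Apply the sign convention of Figure~\ref{crossingvalue} to each of the four cases. \textbf{Step 4:} Check that the two crossings at a given fold carry the \emph{same} sign — this is the content of the phrase ``both contribute the same sign'' — by symmetry of the fold triangle across the angle bisector, which exchanges the two crossings while preserving the over/under convention and the relative orientation.

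The symmetry observation in Step~4 does most of the work: a reflection through the angle bisector is an orientation-preserving symmetry of the local configuration that swaps the role of $e_{i-1}$ and $e_i$, so the two crossings are congruent and necessarily receive identical signs; this is why one can speak of ``the sign a fold contributes'' rather than having to track two independent signs. The remaining four-way case analysis (left/right $\times$ under/over) is then reduced to computing a single representative sign and tracking how each of the two binary choices negates it: swapping left$\leftrightarrow$right reverses the orientation of $\mk$ relative to the boundary and flips the sign, while swapping under$\leftrightarrow$over flips which strand is on top and also flips the sign. This recovers the sign table exactly, since left underfold is declared $+1$, and each single switch (to left overfold, or to right underfold) yields $-1$, while the double switch (right overfold) returns to $+1$.

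The main obstacle I anticipate is purely bookkeeping rather than conceptual: getting the orientation and over/under conventions to line up consistently with the paper's fixed sign conventions (the left/right convention for the signed fold angle in the footnote to Figure~\ref{fig:AcuteVsObtuse}, and the crossing-sign convention of Figure~\ref{crossingvalue}). It is easy to introduce an overall sign error or to mislabel which of the four cases is the ``base'' positive case, so the delicate part is fixing one reference configuration carefully — I would take the left underfold of Figure~\ref{fig:intersections-fc} as the anchor, where the stated value $+1$ can be read directly off the figure — and then deriving the other three cases by the two independent sign-flipping symmetries described above, rather than by recomputing each from scratch. Once the anchor case and the two flip rules are pinned down, the lemma follows with no further computation.
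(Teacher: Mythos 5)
Your sign table is correct, and your overall architecture (anchor the left underfold at $+1$ from Figure~\ref{fig:intersections-fc}, then derive the remaining cases by sign-flipping moves) can be made to work; moreover your under$\leftrightarrow$over rule is justified correctly, since exchanging which sheet lies on top swaps over- and understrand at both crossings while leaving all directions fixed. The genuine gaps are in your other two symmetry claims. First, Step~4: the reflection through the angle bisector is \emph{not} an orientation-preserving self-symmetry of the oriented, stacked fold. It is orientation-reversing on the plane, it reverses the direction of travel along the core, and, once the stacking is tracked correctly, it carries a left underfold to a \emph{right} underfold while negating crossing signs; it does not map the fold to itself, so it cannot be used to compare the two crossings of one and the same fold. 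The fact that those two crossings share a sign has a more pedestrian explanation: one crossing is (core of the top sheet) over (boundary of the bottom sheet), the other is (boundary of the top sheet) over (core of the bottom sheet), and since the core and the boundary strand of a given sheet are parallel and co-directed near the fold, both crossings have the same ordered pair of over- and under-directions, hence the same sign. Second, your left$\leftrightarrow$right rule is misjustified: ``reversing the orientation of $\mk$ relative to the boundary'' is not an available operation on a folded ribbon, because the boundary's orientation is tied to that of the knot diagram; one can only reverse both together. Reversing both is exactly the paper's move, and it \emph{preserves} every crossing sign while swapping left$\leftrightarrow$right \emph{and} under$\leftrightarrow$over simultaneously, pairing the left underfold with the right \emph{over}fold (both $+1$), not with the right underfold. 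Ironically, the correct mechanism for your pure left$\leftrightarrow$right flip is precisely the bisector reflection you tried to use in Step~4: as a mirror of the plane it fixes the underfold/overfold designation, swaps handedness, and negates all signs.

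A smaller but still genuine error is in Step~1: it is not true that a single boundary strand enters the fold and crosses the core twice. A fold acts as a mirror and exchanges the two sides of the ribbon, so two distinct local boundary arcs pass through the fold region (the arc left of the incoming edge continues as the arc right of the outgoing edge, and vice versa), and each of them crosses the knot diagram exactly once. The total count of two is unchanged, but the distinction matters: when $\kwf$ is an annulus the two arcs may lie on different boundary components, which is exactly why a fold contributes $\pm\frac{1}{2}$ to $\Lk(\kwf)$ for an annulus but $\pm 1$ for a M\"obius band (Remark~\ref{rmk:link-compute}). For comparison, the paper's own proof sidesteps all of this bookkeeping: it reads both left-fold cases directly off Figure~\ref{fig:intersections-fc}, and then obtains both right-fold cases at once by reversing the orientation of the entire ribbon, noting that this reversal leaves all crossing signs unchanged while converting the left underfold into a right overfold and the left overfold into a right underfold.
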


\begin{proof} Figure~\ref{fig:intersections-fc} illustrates the left underfold and left overfold case. By reversing the orientation of the folded ribbon in Figure~\ref{fig:intersections-fc}, we see the left underfold becomes a right overfold, and the left overfold becomes a right underfold. The signs of the crossings are unchanged by this reversal (since the orientation of both the knot diagram and boundary component are reversed).
\end{proof}

The restriction on the fold angle $\theta$ was explained previously in Remark~\ref{rmk:fold-angle}.  We follow up by noting that folds with fold angle $0$ do not contribute to the ribbon linking number because the ribbon boundary of the fold does not intersect the knot diagram. We saw this fact in action when we computed the ribbon linking number of the $2$-stick unknot. Recall that there are two folds with fold angle $0$ and so the ribbon linking number is $0$.

\begin{remark}\label{rmk:link-compute} The topological type of the ribbon affects the computation of ribbon linking number.  There are two boundary components when the folded ribbon is a topological annulus, and we only use one in our computation. Looking at Figure~\ref{fig:intersections-fc}, we see that at a fold, we only use one of the two crossings between the ribbon boundary and the knot diagram. At a crossing, we only use two of the four crossings between the ribbon boundary and knot diagram. Thus folds contribute $\pm \frac{1}{2}$ to the ribbon linking number and crossings contribute $\pm 1$. When the ribbon is a M\"obius band, there is a single boundary component and we use all crossings between the ribbon boundary and knot diagram. Thus folds contribute $\pm 1$ to the ribbon linking number and crossings contribute $\pm 2$. 
\end{remark}

For a given folded ribbon knot, we note that there may be additional places where the ribbon's boundary intersects the knot diagram of a different piece of ribbon without the corresponding knot diagrams intersecting. (For example, when there is a crossing close to a fold line, or on one side of a Reidemeister 2 move.) We won't explore such intersections here since they don't necessarily occur. Indeed, in \cite{Den-FRF} we prove that for any regular polygonal knot diagram $\mk$ and folding information $F$, there is a constant $M>0$ such that for all widths $w<M$, the corresponding folded ribbon knot $\kwf$ has only single and double points.

Later in Section~\ref{sect:unknots}, we construct unknot diagrams whose corresponding folded ribbon unknots are topological annuli and with specific ribbon linking number. The following observation is very useful there and elsewhere.
 
\begin{lemma} \label{lem:two-folds}
Suppose a folded ribbon knot $\kwf$ is a topological annulus. Consider a piece of ribbon of $\kwf$ which corresponds to part of a knot diagram with no self-intersections and two vertices. Then that piece of ribbon has ribbon linking number $\pm 1$ if and only if it has two folds with the same sign. 
\end{lemma}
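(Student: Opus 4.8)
The plan is to reduce the computation to bookkeeping over the two folds, using the sign rule of Lemma~\ref{lem:fold-compute} together with the annulus contribution convention recorded in Remark~\ref{rmk:link-compute}. First I would observe that, since the relevant part of the knot diagram has two vertices and no self-intersections, the corresponding piece of ribbon has exactly two folds and no crossings (taking the width small enough that only single and double points occur, as guaranteed in \cite{Den-FRF}). Consequently the only intersection points between the ribbon's boundary and the knot diagram lying in this piece are those arising from the two folds, and the ``ribbon linking number of the piece'' is simply the sum of the signed contributions of these two folds.

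Next I would invoke the annulus case of Remark~\ref{rmk:link-compute}: because $\kwf$ is a topological annulus we track a single boundary component, so at each fold exactly one of the two boundary--diagram intersection points is counted, and with the factor of $\tfrac12$ in the definition of linking number each fold contributes precisely $\pm\tfrac12$ to $\Lk$. The sign is then read off from Lemma~\ref{lem:fold-compute}: left underfolds and right overfolds contribute $+\tfrac12$, while left overfolds and right underfolds contribute $-\tfrac12$.

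Finally I would carry out the short case analysis. If the two folds have the same sign, their contributions add to $+\tfrac12+\tfrac12=+1$ or to $-\tfrac12-\tfrac12=-1$, so the piece has ribbon linking number $\pm1$; if the two folds have opposite signs, the contributions cancel to $0$. Since no other intersections occur within the piece, this yields the biconditional in both directions at once.

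The main obstacle here is less a genuine difficulty than a bookkeeping subtlety: one must be careful that in the annulus case each fold contributes $\pm\tfrac12$ rather than $\pm1$, which requires correctly applying Remark~\ref{rmk:link-compute} so that exactly one of the two boundary--diagram crossings at each fold is used and the half-integer factor is not double-counted. I would also make explicit what ``the ribbon linking number of a piece'' means---namely the partial sum of fold contributions coming from that piece---so that the additivity used in the case analysis is justified. Once this convention is fixed, the argument is immediate.
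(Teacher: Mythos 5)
Your proposal is correct and follows essentially the same route as the paper's proof: both arguments combine the sign rule of Lemma~\ref{lem:fold-compute} with the annulus convention of Remark~\ref{rmk:link-compute} (each fold contributing $\pm\tfrac12$), and conclude that the two contributions sum to $\pm1$ exactly when the folds share a sign. Your added care in noting that the piece contains no crossings and that opposite signs give $0$ only makes explicit what the paper leaves implicit.
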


\begin{figure}[htbp]
    \begin{overpic}{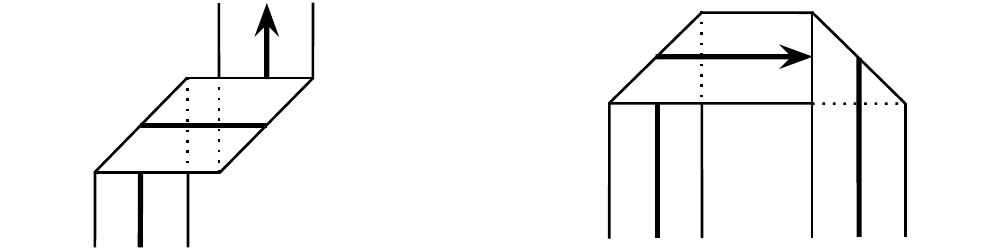}
    \put(-8,12){right overfold}
    \put(13,9){$+1$}
    \put(24,6){left underfold}
     \put(22.5,13.5){$+1$}
    \put(44,20){right overfold}
    \put(64,16){$+1$}
    \put(88,18){right overfold}
    \put(80.75,16){$+1$}
     \end{overpic}
    \caption{A piece of folded ribbon with zero writhe and ribbon linking number $+1$ must have two folds with the same sign.}
    \label{fig:linking-1}
\end{figure}

\begin{proof} Since we assume the knot diagram $\mk$ has no self-intersections, then the ribbon linking number depends only on the two folds (corresponding to the vertices of $\mk$). 
Recall that Lemma~\ref{lem:fold-compute} says that  the sign of a fold is as follows:  left underfold or right overfold~$+1$,  left overfold or right underfold~$-1$.  From Remark~\ref{rmk:link-compute}, we know that each fold contributes $\pm\frac{1}{2}$ to the ribbon linking number (since $\kwf$ is a topological annulus).
Thus, the two folds have the same sign if and only if the ribbon linking number is $\pm 1$. The two possible cases are shown in Figure~\ref{fig:linking-1}.
 \end{proof}

%
%\begin{figure}[htbp]
%\begin{center}
%\begin{overpic}{intersections-other.pdf}
%%\put(19.5,5.5){$+1$}
%%\put(18,14.5){$+1$}
%%%
%%\put(47.75,14.5){$-1$}
%%\put(49.5,5.5){$-1$}
%%%
%%\put(79,15.75){$+1$}
%%\put(70.5,7.75){$+1$}
%%\put(73,2.5){$+1$}
%%\put(85.5,7.75){$+1$}
%\end{overpic}
% \caption{The values of the linking number between the boundary component(s) and the knot diagram.
% } 
%\label{fig:intersections-other}
%\end{center}
%\end{figure}

\subsection{Link, Twist and Writhe}\label{sect:LkTwWr}
Since oriented folded ribbon knots can be viewed as framed knots, we can also talk about the twist and writhe of the  ribbon.  In practical terms, the twist measures how much the ribbon rotates around its axis, while the writhe gives a measure of non-planarity of the ribbon's axis curve. 

\begin{definition}
Given an oriented folded ribbon knot $\kwf$,  we define the {\em twist} $\Tw(\kwf)$ to be one-half the sum of the crossing values between the knot diagram and one boundary component of the folded ribbon {\em computed only at folds}.  We define the {\em writhe} $\Wr(\kwf)$, to be the sum of the crossing values of its knot diagram $\mk$.  
\end{definition}

 We note that our definitions of twist and writhe exactly correspond to the definitions given in terms of integrals for ribbon knots in~$\mathbb{R}^3$ (see \cite{Adams, Egg}). The reader may wonder about non-regular knot diagrams for writhe; for example, the $2$-stick unknot. Here, we follow the integral definition of writhe, and we start with our unknot diagram in the plane. Then we can imagine bending one edge slightly above the plane in a direction normal to the plane, and bending one edge slightly below the plane. From any direction (other than the direction normal to the plane) the unknot will have writhe zero.  We conclude that this folded ribbon unknot has writhe $0$.

For ribbon knots in~$\R^3$, there is a well known relationship due to G.~C\u{a}lug\u{a}reanu~\cite{ Cal59, Cal61} and J.H.~White~\cite{Egg, Whi} between the linking number, twist, and the writhe. Since our definitions correspond to theirs, we immediately have the following:

\begin{theorem}\label{thm:link-twist-writhe}
Given an oriented folded ribbon knot $\kwf$, we have
\begin{enumerate}
\item  $\Lk(\kwf) =\Tw(\kwf)+\Wr(\kwf)$ \ \ when $\kwf$ is a topological annulus, 
\item $\Lk(\kwf) =\Tw(\kwf)+2\Wr(\kwf)$ \ \ when $\kwf$ is a M\"obius band.
\end{enumerate}
\end{theorem}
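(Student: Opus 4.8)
The plan is to reduce every quantity in the statement to a signed count of crossings between the knot diagram $\mk$ and the ribbon boundary, and then to bookkeep those crossings by location. By Definition~\ref{ribbonlinkingnumber}, $\Lk(\kwf)$ is the linking number of $\mk$ with one boundary component, that is, one-half the signed sum of all crossings between $\mk$ and that boundary. I would first argue --- as is essentially recorded in the discussion preceding Lemma~\ref{lem:fold-compute} --- that (for a width small enough that only single and double points occur) these crossings appear in only two kinds of location: at the folds of the ribbon, and at the crossings of the knot diagram $\mk$. Hence the signed crossing sum splits as a sum over folds plus a sum over diagram crossings, and therefore
$$\Lk(\kwf) = \tfrac{1}{2}\,C_{\mathrm{fold}} + \tfrac{1}{2}\,C_{\mathrm{cross}},$$
where $C_{\mathrm{fold}}$ and $C_{\mathrm{cross}}$ denote the signed crossing totals at folds and at diagram crossings, respectively.

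The second step is to match each piece to $\Tw$ and $\Wr$. By definition $\Tw(\kwf) = \tfrac{1}{2} C_{\mathrm{fold}}$, so the fold contribution is exactly the twist and needs nothing further; indeed Lemma~\ref{lem:fold-compute} already pins down the sign each fold contributes. It then remains to identify $\tfrac{1}{2} C_{\mathrm{cross}}$. Here I would use the local model of a diagram crossing in Figure~\ref{fig:intersections-fc} (right), where the sign of each boundary-versus-diagram crossing equals the sign of the underlying diagram crossing, so that a diagram crossing of sign $\varepsilon$ contributes $\varepsilon$ for each boundary strand it meets.

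The decisive step --- and the one I expect to be the real obstacle --- is the dependence on topological type, which is exactly the content of Remark~\ref{rmk:link-compute}. When $\kwf$ is an annulus, the chosen boundary component meets $\mk$ at two of the four local crossings at each diagram crossing, so $\tfrac{1}{2} C_{\mathrm{cross}} = \sum_{\text{crossings}} \varepsilon = \Wr(\kwf)$, which gives statement (1). When $\kwf$ is a M\"obius band there is a single boundary component, and it meets $\mk$ at all four local crossings, so $\tfrac{1}{2} C_{\mathrm{cross}} = 2\sum_{\text{crossings}} \varepsilon = 2\,\Wr(\kwf)$, which gives statement (2). The factor of two is the crux: the cleanest way to see it rigorously is to note that the boundary of a M\"obius band is a connected double cover of the core, so it runs past each diagram crossing twice. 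I would either argue this directly from Remark~\ref{rmk:link-compute}, or pass to the orientation double cover --- a genuine annulus --- apply the classical C\u{a}lug\u{a}reanu--White relation $\Lk = \Tw + \Wr$ there, and descend. Either way, once the combinatorial definitions of $\Lk$, $\Tw$, and $\Wr$ have been matched to the integral-geometric ones, the annulus identity is precisely the C\u{a}lug\u{a}reanu--White theorem, and the M\"obius identity is its double-cover shadow.
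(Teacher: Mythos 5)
Your proof is correct, but it takes a genuinely different route from the paper's. The paper disposes of Theorem~\ref{thm:link-twist-writhe} in one stroke: it observes that its combinatorial definitions of $\Lk$, $\Tw$, and $\Wr$ agree with the classical integral definitions for ribbons in $\R^3$, and then cites the C\u{a}lug\u{a}reanu--White relation, so no crossing-by-crossing argument appears at all. You instead give a self-contained combinatorial proof: split the signed $\mk$-versus-boundary crossings by location (folds versus diagram crossings), identify the fold half-sum with $\Tw$ by definition, and use the local count of Remark~\ref{rmk:link-compute} (the chosen boundary component sees two of the four local crossings in the annulus case, and all four in the M\"obius case) to identify the remaining half-sum with $\Wr$ or $2\Wr$. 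What your approach buys is precisely the step the paper leaves implicit: the classical relation $\Lk=\Tw+\Wr$ is stated for honest annular framings, and the factor of $2$ in the M\"obius case does not follow from it ``immediately''---it requires exactly your observation that the single boundary component double covers the core and hence runs past every diagram crossing twice; that double-cover picture also shows the two boundary strands are coherently oriented with the core, so the four local crossings all carry the sign of the underlying diagram crossing rather than cancelling, a point worth making explicit. What the paper's approach buys is brevity and a direct tie to the smooth theory. The one hypothesis you should state up front (as the paper does before Lemma~\ref{lem:fold-compute} and after Remark~\ref{rmk:link-compute}) is that the width is small enough that the boundary meets the knot diagram only at folds and at diagram crossings; otherwise your decomposition of the crossing sum is not exhaustive.
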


We can use our knowledge of the ribbonlength of folds to obtain a lower bound on the folded ribbonlength of unknots.

\begin{theorem} \label{thm:RibLowerBd}
Suppose the folded ribbon knot $\kwf$ has writhe $\Wr(\kwf) =0$ and ribbon linking number $\Lk(\kwf)=\pm n$. Then the minimum folded ribbonlength of such a folded ribbon knot is bounded from below as follows:
\begin{enumerate} 
\item $2n \leq \Rib([\kwf])$  \ \  when $\kwf$ is a topological annulus,
\item $n \leq \Rib([\kwf])$ \ \ when $\kwf$ is a M\"obius band. 
\end{enumerate} 
\end{theorem}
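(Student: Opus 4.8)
The plan is to bound the ribbon linking number in terms of the number of folds, and then convert a count of folds into a lower bound on ribbonlength using Corollary~\ref{cor:triangles}. Since the writhe is assumed to be zero, Theorem~\ref{thm:link-twist-writhe} tells us that the ribbon linking number equals the twist exactly (in the annulus case) or equals the twist (in the M\"obius case, since the writhe term vanishes): in both cases $\Lk(\kwf)=\Tw(\kwf)$, so $|\Tw(\kwf)|=n$. The twist is computed entirely at the folds, and by Lemma~\ref{lem:fold-compute} every fold contributes a sign $\pm 1$ to the crossing count between the knot diagram and a boundary component. The first step is therefore to translate $|\Tw(\kwf)|=n$ into a lower bound on the number of folds that actually contribute, keeping careful track of the normalization from Remark~\ref{rmk:link-compute}.

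First I would handle the topological annulus case. By Remark~\ref{rmk:link-compute}, each contributing fold adds $\pm\tfrac12$ to the ribbon linking number (one boundary component, one of the two crossings per fold), so the twist is one-half the signed count of contributing folds. To have $|\Tw(\kwf)|=n$ we need the signed sum of fold contributions to have absolute value $n$, which forces at least $2n$ folds with fold angle strictly between $0$ and $\pi$ (folds with angle $0$ contribute nothing, as noted after Lemma~\ref{lem:fold-compute}). Each such fold has folded ribbonlength at least $1$ by Corollary~\ref{cor:triangles}, and these pieces of ribbon are disjoint portions of the knot diagram, so their lengths add. Hence $\Rib(\kwf)\geq 2n$, and taking the infimum over the equivalence class gives $2n\leq\Rib([\kwf])$.

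The M\"obius band case is analogous but with the other normalization from Remark~\ref{rmk:link-compute}: here there is a single boundary component and all crossings are used, so each contributing fold adds $\pm1$ to the ribbon linking number, and the twist equals one-half the signed fold count where each fold is now weighted by its full $\pm1$. To reach $|\Tw(\kwf)|=n$ we therefore need at least $n$ nontrivial folds, each contributing ribbonlength at least $1$ by Corollary~\ref{cor:triangles}, giving $\Rib(\kwf)\geq n$ and hence $n\leq\Rib([\kwf])$.

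The main obstacle I expect is making the additivity of ribbonlength over folds rigorous: I have tacitly assumed that the $2n$ (respectively $n$) folds occupy disjoint arcs of the knot diagram and that the ``ribbonlength of a fold is at least $1$'' bound can be summed without double-counting any length. I would need to argue that distinct folds correspond to distinct vertices of the polygonal diagram and that the fold (or extended-fold) pieces from Definition~\ref{def:extended-fold} are non-overlapping along the diagram, so that the total length of $\mk$ is at least the sum of the individual fold contributions. A secondary subtlety is the sign bookkeeping: signed cancellation among folds could in principle let $|\Tw(\kwf)|=n$ be achieved with more than $2n$ (or $n$) folds but never fewer, so the counting argument only gives a lower bound on the fold count, which is exactly what is needed; I would state this carefully to avoid claiming an exact fold count.
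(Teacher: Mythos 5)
Your proposal is correct and follows essentially the same route as the paper's own proof: writhe zero reduces the linking number to twist, Remark~\ref{rmk:link-compute} converts $|\Tw(\kwf)|=n$ into a lower bound of $2n$ (annulus) or $n$ (M\"obius band) contributing folds, and Corollary~\ref{cor:triangles} gives ribbonlength at least $1$ per fold. The additivity and sign-cancellation subtleties you flag are handled no more rigorously in the paper, which simply notes that extra folds and crossings occur in oppositely signed pairs and that fold lengths sum into the total.
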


\begin{proof} Since $\Wr(\kwf)=0$, the ribbon linking number comes solely from the twist $\Tw(\kwf)$, that is from the folds of $\kwf$. We apply Remark~\ref{rmk:link-compute} and see that if $\kwf$ is an annulus, then there must be $2n$ folds with the same sign to give ribbon linking number $n$. If $\kwf$ is a M\"obius band, then there must be $n$ folds with the same sign to give ribbon linking number $n$.  In both cases, there may be additional folds or crossings. These folds and crossings will appear in pairs and have opposite signs. Corollary~\ref{cor:triangles} says that the minimum folded ribbonlength needed to create a fold is $1$.  Thus, the folded ribbonlength of $\kwf$ is the ribbonlength of the folds  plus any additional ribbonlength needed to create the knot. This immediately gives the lower bounds above.
\end{proof}

%%%%%%%%%%%%%%%%%%
\subsection{Convex $n$-stick unknots}\label{sect:n-unknot}

We assume the unknot diagram $\um$ is a non-degenerate convex $n$-gon. (Non-degenerate means that the sidelengths are nonzero and the interior angles are not $\pi$. Convexity guarantees that the internal angles are not $0$.) The aim of this subsection is to see how the folding information affects the ribbon linking number and vice versa. 

\begin{figure}[htbp]
\begin{center}
\includegraphics{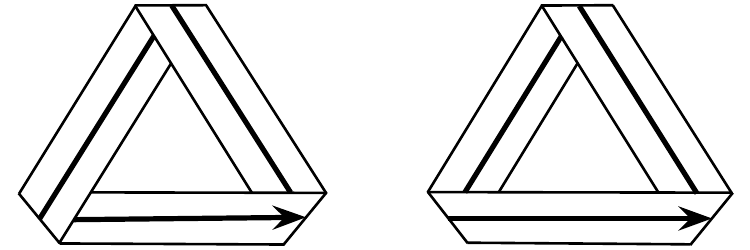}
\caption{On the left, a folded ribbon $3$-stick unknot where each fold is an underfold. On the right, a folded ribbon $3$-stick unknot where one fold is an overfold and the other two are underfolds. Figure re-used with permission from \cite{DKTZ}.}
\label{fig:3-stick-unknots}
\end{center}
\end{figure} 

We start by considering the simplest example: $3$-stick unknots.  There are two possibilities for the folding information of the corresponding folded ribbon unknot. The first is where the folds are all the same, and the second is where one fold is different from the other two.  Looking at Figure~\ref{fig:3-stick-unknots}, we see the folded ribbon $3$-stick unknots have been oriented in the counterclockwise direction. On the left, the folding information is under-under-under ($uuu$), and on the right, the folding information is under-under-over ($uuo$). A short computation using Lemma~\ref{lem:fold-compute} shows that the $uuu$ folded ribbon unknot has ribbon linking number $+3$, and the $uuo$ folded ribbon unknot has ribbon linking number $+1$. If we change the overfolds to underfolds and vice versa, then a similar calculation reveals the $ooo$ folded ribbon unknot has ribbon linking number $-3$, and the $oou$ folded ribbon unknot has ribbon linking number $-1$. Moreover, these are the only possibilities! 

\begin{remark}\label{rmk:fold-info3}  We have shown that ribbon linking number $\Lk(\uwf)$ is a complete invariant. Namely, for $3$-stick unknots oriented in the counterclockwise direction, we see that up to cyclic relabeling of the vertices:
\begin{compactenum}
\item   $Lk(\uwf)= + 3$ if and only if the folding information is $uuu$,
\item  $Lk(\uwf)= - 3$ if and only if the folding information is $ooo$,
\item   $Lk(\uwf)= + 1$ if and only if the folding information is $uuo$,
\item $Lk(\uwf)= - 1$, if and only if the folding information is $oou$.
\end{compactenum}

When $3$-stick unknots are oriented in the clockwise direction we get almost the same result, but the overfolds and underfolds are switched.  When we switch orientation, as we saw in the proof of Lemma~\ref{lem:fold-compute}, the left underfold becomes a right overfold and the left overfold becomes a right underfold. The signs of the crossings are unchanged by this reversal. 
\end{remark}

We now use this example as motivation for the general case when we have folded ribbon unknots $\uw$ whose knot diagrams are non-degenerate, convex $n$-gons for $n\geq 3$.  There are two cases to consider: either $n$ is odd and $\uw$ is a M\"obius band or $n$ is even and $\uw$ is an annulus. For example, Figure~\ref{fig:pentagon} shows folded ribbon unknots whose knot diagrams are a regular pentagon and a regular octagon. Their corresponding folded ribbon unknots are a M\"obius band and an annulus respectively.

\begin{figure}[htbp]
    \begin{overpic}{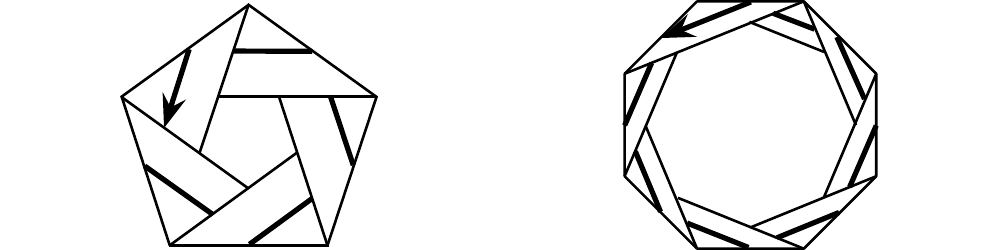}
    \end{overpic}
    \caption{Folded ribbons unknots corresponding whose knot diagrams are a regular pentagon (left) and a regular octagon (right).}
    \label{fig:pentagon}
\end{figure}

We start by working out the possible values of ribbon linking number for such unknots by recognizing that the only contribution to the ribbon linking number comes from the folds, then thinking about the possible folding information at each fold. Recall that we have the following shorthand for folds: $u$ represents an underfold and $o$ represents an overfold. We can repeat the arguments before Remark~\ref{rmk:fold-info3} to prove that the ribbon linking number of the corresponding folded ribbon unknot $\uwf$ is a complete invariant.

\begin{proposition} \label{prop:n-linking}
Let $\um$ be a non-degenerate convex $n$-gon for $n\geq 3$.  Then the corresponding folded ribbon  unknot $\uw$ can only have the following ribbon linking numbers:
\begin{compactenum}
\item when $n\geq 3$ and is odd,  then $\Lk(\uwf) = \pm 1, \pm 3, \dots , \pm n$;
\item when $n\geq 4$ and is even, then $\Lk(\uwf) = 0, \pm 1, \pm 2, \dots , \pm\frac{n}{2}$.
\end{compactenum}
Moreover, up to permutation among the vertices, the folding information determines the ribbon linking number and the ribbon linking number determines the folding information.
\end{proposition}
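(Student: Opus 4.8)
The plan is to reduce everything to a single count: the number of underfolds. Fix an orientation of $\um$, say counterclockwise. Since $\um$ is a non-degenerate convex $n$-gon, its knot diagram has no crossings, so $\Wr(\uwf)=0$ and the entire ribbon linking number is produced by the folds at the $n$ vertices. The key geometric observation, and the one place convexity is essential, is that every fold has the same handedness: traversing a convex polygon counterclockwise, each successive edge turns to the left, so every vertex carries a \emph{left} fold. Consequently each fold is either a left underfold or a left overfold, and by Lemma~\ref{lem:fold-compute} it contributes sign $+1$ (underfold) or $-1$ (overfold), weighted by the factor from Remark~\ref{rmk:link-compute}: $\pm\tfrac12$ per fold when $\uwf$ is an annulus and $\pm 1$ per fold when it is a M\"obius band.

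Next I would introduce $k$, the number of underfolds among the $n$ vertices, so that there are $n-k$ overfolds and the net signed fold count equals $k-(n-k)=2k-n$. For odd $n$ the ribbon is a M\"obius band, giving $\Lk(\uwf)=2k-n$; for even $n$ it is an annulus, giving $\Lk(\uwf)=\tfrac12(2k-n)=k-\tfrac n2$. Letting $k$ range over $\{0,1,\dots,n\}$ then yields exactly the two advertised lists: when $n$ is odd, $2k-n$ runs through the odd integers $-n,-n+2,\dots,n$, i.e. $\pm1,\pm3,\dots,\pm n$; when $n$ is even, $k-\tfrac n2$ runs through $-\tfrac n2,\dots,0,\dots,\tfrac n2$, i.e. $0,\pm1,\dots,\pm\tfrac n2$. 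This proves both (1) and (2).

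For the complete-invariant claim, I would note that in each case $k\mapsto\Lk(\uwf)$ is a strictly increasing affine function of $k$, hence a bijection from $\{0,1,\dots,n\}$ onto the corresponding list of $n+1$ linking numbers. The folding information immediately determines $k$ (count the underfolds) and therefore the linking number; conversely the linking number determines $k$, and $k$ determines the folding information up to the choice of which vertices receive the underfolds, i.e. up to permutation of the vertices. Finally, to dispatch the orientation issue, I would remark exactly as in Remark~\ref{rmk:fold-info3} that reversing to a clockwise traversal turns every fold into a right fold and swaps the roles of $u$ and $o$; since the displayed lists are symmetric under negation, the set of attainable linking numbers is unchanged, and the bijection transfers verbatim.

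The main obstacle is honestly the uniform-handedness step: everything downstream is a one-line count, so the content of the proposition lives entirely in the claim that convexity (together with a fixed orientation) forces all $n$ folds to be left folds, which is what lets the per-fold contribution depend only on the underfold/overfold choice and not on the local geometry of the polygon.
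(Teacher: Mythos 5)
Your proof is correct and takes essentially the same route as the paper's: counterclockwise orientation plus convexity forces every fold to be a left fold, Lemma~\ref{lem:fold-compute} and Remark~\ref{rmk:link-compute} give the per-fold contributions ($\pm 1$ for a M\"obius band, $\pm\tfrac{1}{2}$ for an annulus), and the attainable linking numbers follow by counting underfolds against overfolds. The only difference is presentational: where the paper iteratively flips one fold at a time and cancels $+1,-1$ pairs (deferring the full correspondence to its appendix), you write the closed-form count $\Lk(\uwf)=2k-n$ (resp.\ $k-\tfrac{n}{2}$) in the number $k$ of underfolds, which has the small advantage of making the bijectivity underlying the complete-invariant claim explicit.
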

\begin{proof}
Without loss of generality, assume that $\um$ is oriented in a counterclockwise direction. Since the $n$-gon is convex, this means that $\um$ turns to the left at each vertex. (Just as in Remark~\ref{rmk:fold-info3}, when the orientation is reversed, we get the same result, but the overfolds and underfolds are switched.)

We start by assuming $n\geq 3$ is odd. Here, the corresponding folded ribbon unknot is a M\"obius band, and there is only one boundary component.  By using Lemma~\ref{lem:fold-compute}, we see that a left underfold contributes $+1$ and a left overfold contributes $-1$ to the ribbon linking number.  Suppose all the folds are underfolds, then the ribbon linking number is $+ n$. Suppose all the folds are overfolds, then the ribbon linking number is $-n$.   Now assume one fold is of a different type to the others: up to permutation, either $ou\dots u$, or $uo\dots o$. In either case, when computing the ribbon linking number we see a pair $+1, -1$ which sum to 0. Thus the ribbon linking number is $+ (n-2)$ for $ou\dots u$ and $-(n-2)$ for $uo\dots o$. If we assume there are two folds which are of a different type to the others, then when computing the ribbon linking number we have two pairs of $+1,-1$ which sum to 0, giving a ribbon linking number of $\pm(n-4)$. We can keep repeating this argument until there are $\lfloor\frac{n}{2}\rfloor$ of one kind of fold and $\lceil\frac{n}{2}\rceil$ of the other kind of fold. In this case, the ribbon linking number is $\pm 1$.  Thus for $n$ odd, the possible ribbon linking numbers are $\pm 1, \pm 3, \pm 5, \dots, \pm n$. 

Now assume $n\geq 4$ is even. Here, the corresponding folded ribbon unknot is a topological annulus, and there are two boundary components. Again using Lemma~\ref{lem:fold-compute}, we see that a left underfold contributes $+ \frac{1}{2}$ and a left overfold contributes  $-\frac{1}{2}$ to the ribbon linking number. Suppose all the folds are underfolds, then the ribbon linking number is  $+\frac{n}{2}$. Suppose all the folds are overfolds, then the ribbon linking number is $-\frac{n}{2}$.  (Recall $n$ is even, so $\pm \frac{n}{2}$ is an integer.) Now assume one fold is of a different type to the others: up to permutation, either $ou\dots u$, or $uo\dots o$. In either case, when computing the ribbon linking number we see a pair $+\frac{1}{2}, -\frac{1}{2}$ which sum to 0. Thus the ribbon linking number is $\pm (\frac{n}{2}-1)$. If we assume there are two folds which are of a different type to the others, then when computing the ribbon linking number we have two pairs of $+\frac{1}{2}, -\frac{1}{2}$ which sum to 0, giving a ribbon linking number of $\pm(\frac{n}{2}-2)$.  We can keep repeating this argument until there are $\frac{n}{2}$ of each kind of fold, and this gives ribbon linking number 0. Thus for $n$ even, the possible ribbon linking numbers are $0, \pm 1, \pm 2, \dots, \pm \frac{n}{2}$.

In all of these cases we see that the folding information determines the ribbon linking number and the ribbon linking number determines the folding information. In Appendix~\ref{append:n-linking}, we have listed the ribbon linking number with the corresponding folding information in complete detail. 
\end{proof}

%%%%%%%%%%%
%%%%%%%%%%%

\section{Ribbonlength of Folded ribbon $3$-stick unknots}\label{sect:3-stick}

One of the main goals of this paper is to understand folded ribbonlength of unknots with respect to ribbon equivalence. In this section we look at the simplest examples of unknots where the corresponding folded ribbon knot is a M\"obius band. In this case, the $3$-stick unknot diagram must be a non-degenerate triangle.

\begin{lemma}
A $3$-stick unknot diagram $\um$ is a non-degenerate triangle if and only if the corresponding folded ribbon unknot $\uwf$ is a M\"obius band.
\end{lemma}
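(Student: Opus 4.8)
The plan is to control the topological type of $\uwf$ through the number of \emph{genuine} folds it has, using the principle behind the parity fact recalled earlier (an even number of edges gives an annulus, an odd number gives a M\"obius band): each genuine fold acts as a mirror and reverses the local orientation of the ribbon, so the ribbon is orientable, i.e. an annulus, precisely when the number of genuine folds is even, and is a M\"obius band precisely when that number is odd. The whole lemma then reduces to counting genuine folds, and the only delicate point is that a ``$3$-stick'' diagram need not have three of them once degeneracy is allowed.

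For the direction asserting that a non-degenerate triangle gives a M\"obius band, I would note that a genuine triangle is convex with all three interior angles strictly between $0$ and $\pi$: none equals $\pi$ by non-degeneracy, and none is $0$ since all sidelengths are positive. By Remark~\ref{rmk:fold-angle}, each vertex with fold angle $0<\theta<\pi$ carries a genuine fold, so $\uwf$ has exactly three folds; three being odd, $\uwf$ is a M\"obius band.

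For the converse I would argue the contrapositive: a $3$-stick diagram that is \emph{not} a non-degenerate triangle produces an annulus. Degeneracy forces either a zero sidelength, so that two vertices coincide and the diagram collapses to a bigon, or an interior angle equal to $\pi$, so that two consecutive edges are collinear and, by Remark~\ref{rmk:fold-angle}, the vertex between them carries no fold. In both cases the configuration is (as a folded ribbon) the $2$-stick unknot, already noted to be a topological annulus; equivalently only two orientation-reversing folds survive, an even number, so $\uwf$ cannot be a M\"obius band.

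The step I expect to be the main obstacle is exactly this reinterpretation of the parity statement. As phrased it counts the nominal edges, yet a degenerate $3$-stick diagram still has three edges while behaving topologically like a $2$-stick diagram. I therefore need to justify carefully that it is the number of genuine (orientation-reversing) folds that governs the annulus-versus-M\"obius dichotomy, and that a fold angle of $\pi$ or a collapsed edge contributes no such reversal. Once this bookkeeping is secured, both implications are immediate and combining them yields the stated equivalence.
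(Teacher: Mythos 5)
Your proposal is correct, and your converse direction is essentially the paper's: degeneracy of a $3$-stick diagram forces collinearity, the vertex with fold angle $\pi$ carries no fold by Remark~\ref{rmk:fold-angle}, and what survives is the $2$-stick unknot, a topological annulus. Where you genuinely differ is the forward direction and the organizing principle. The paper handles the non-degenerate case by inspecting the two possible folding configurations (all folds alike, or one different, as in Figure~\ref{fig:3-stick-unknots}) and observing that each is a M\"obius band; you instead run everything through a single parity principle: each genuine fold acts as a mirror reversing the transverse orientation of the ribbon, so $\uwf$ is a M\"obius band exactly when the number of genuine folds is odd. This is the degeneracy-safe refinement of the edge-parity fact the paper states (but does not invoke here), and it buys you uniformity: both implications follow from one count, with no case analysis over underfolds and overfolds, since the topological type is blind to the folding information $F$. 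The paper's route buys concreteness at the cost of that case inspection. Two small corrections to your write-up. First, your justification that no interior angle of a non-degenerate triangle is $0$ (``since all sidelengths are positive'') is not sufficient: the collinear triangle has positive sidelengths and two zero angles. The right argument is that a zero angle at one vertex forces the three vertices to be collinear, hence an angle of $\pi$ at another vertex, contradicting non-degeneracy. Second, the zero-sidelength case you consider is excluded by convention: the paper takes a $3$-stick diagram to have three edges of nonzero length, which is exactly how its proof dispenses with that case; your treatment of it is harmless but unnecessary. Finally, note that in the degenerate case the two surviving vertices have fold angle $0$, and Remark~\ref{rmk:fold-angle} confirms these are genuine orientation-reversing folds, so your count of two (even, hence annulus) is consistent with the paper's conclusion.
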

\begin{proof}
Assume $\um$ is a non-degenerate triangle. Figure~\ref{fig:3-stick-unknots} shows the two kinds of corresponding folded ribbon unknots. In each case $\uwf$ is a M\"obius band.  Now, assume that $3$-stick unknot diagram is a degenerate triangle. Since we are assuming the knot diagram has 3 edges, this means they are all of nonzero length. Since the diagram is a degenerate triangle, this means that all three edges are collinear. Then one vertex $v$ is between the other two, thus the fold angle at $v$ is $\pi$ and there is no fold. The two edges adjacent to $v$ behave as one edge, and we see that $\uwf$ is a topological annulus.
\end{proof}

From this point on we assume that $\um$ is a non-degenerate triangle.  As we saw in Section~\ref{sect:n-unknot}, there are two possibilities for the folding information of the corresponding folded ribbon unknot: either the folds are all the same, or one fold is different from the other two.  In previous work~\cite{DKTZ}, we have made a first pass at understanding the folded ribbonlength of $3$-stick unknots. We computed the folded ribbonlength of a $3$-stick unknot whose knot diagram is an equilateral triangle, giving an upper bound on the folded ribbonlength.
\begin{proposition}[\cite{DKTZ}%Proposition 4.4
]
\label{prop:3-stick-upper}
 The minimum ribbonlength of any folded ribbon $3$-stick unknot $\uwf$ is bounded above as follows:
\begin{enumerate}
\item $\Rib([\uwf])\leq 3\sqrt{3}$ \ \ when the ribbon linking number is $\Lk(\uwf)= \pm 3$,
\item $\Rib([\uwf])\leq \sqrt{3}$ \ \ when the ribbon linking number is  $\Lk(\uwf) = \pm 1$.
\end{enumerate}

\end{proposition}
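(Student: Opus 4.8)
The statement is an upper bound, so the plan is purely constructive: I would exhibit one explicit folded ribbon $3$-stick unknot realizing each target ribbon linking number and compute its folded ribbonlength. Since $\Rib([\uwf])$ is an infimum over representatives, it is automatically bounded above by the value at any single valid representative. The natural candidate, forced by the symmetry of the problem, is the equilateral triangle. By Remark~\ref{rmk:fold-info3} (equivalently Proposition~\ref{prop:n-linking} with $n=3$), orienting the triangle counterclockwise and choosing folding information $uuu$ (or $ooo$) produces ribbon linking number $\pm 3$, while $uuo$ (or $oou$) produces $\pm 1$. These are exactly the two cases of the statement, and Figure~\ref{fig:3-stick-unknots} already displays both configurations.

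Next I would pin down the local fold geometry at a single vertex. Each interior angle of the equilateral triangle is $\pi/3$, so this is the fold angle $\theta$ (recall the degenerate $2$-stick case corresponds to $\theta=0$ and the straight case to $\theta=\pi$). Setting $w=1$ and applying Proposition~\ref{prop:extended-fold}, the extended fold at each vertex has ribbonlength $\cot(\pi/6)=\sqrt{3}$; equivalently, each vertex consumes a length $\tfrac{\sqrt 3}{2}$ of ribbon along each of its two incident edges. This one computation is the engine for both bounds, the only remaining question being how much of each edge is \emph{forced} to be present in a valid (immersed, disjoint-fold, crossing-consistent) configuration.

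For the $\pm 3$ case all three folds have the same type, so the ribbon winds monotonically and the tightest admissible triangle is the one whose perimeter is exactly tiled by the three extended folds. Each edge then has length $\tfrac{\sqrt 3}{2}+\tfrac{\sqrt 3}{2}=\sqrt 3$, so $\Len(\um)=3\sqrt 3$ and $\Rib([\uwf])\le 3\sqrt 3$. For the $\pm 1$ case the single reversed fold lets the two ribbon pieces meeting there pass over/under one another, which relaxes the non-overlap condition on the two incident edges and admits a strictly smaller triangle; carrying out the corresponding collision analysis I expect to recover a valid configuration of total length $\sqrt 3$, giving $\Rib([\uwf])\le\sqrt 3$.

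I expect the genuine obstacle to be precisely this validity check for the minimal triangles, not the arithmetic. One must verify that at the stated sizes the fold regions remain disjoint, that the immersion has no extra singularities beyond the fold lines, and that the crossing information is consistent with the prescribed folding information; this is where the small-width existence result of \cite{Den-FRF} and the explicit picture of the acute fold in Figure~\ref{fig:AcuteVsObtuse} must be invoked. The $uuo$ case is the subtle one: I would have to track carefully which ribbon pieces are permitted to overlap once one fold is reversed, since that single change is exactly what drops the bound from $3\sqrt 3$ to $\sqrt 3$, and mishandling it would either wrongly forbid the small triangle or wrongly admit an invalid one.
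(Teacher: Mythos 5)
Your skeleton is the same as the one behind the cited result in \cite{DKTZ}: take an equilateral triangle, realize $\Lk(\uwf)=\pm 3$ with folding information $uuu$/$ooo$ and $\Lk(\uwf)=\pm 1$ with $uuo$/$oou$ (Remark~\ref{rmk:fold-info3}), and bound the infimum by the value of that one configuration. Your $\pm 3$ arithmetic is also right: with $w=1$ each extended fold has ribbonlength $\cot(\pi/6)=\sqrt{3}$, and the triangle of side $\sqrt{3}$ whose perimeter these folds exactly tile coincides with the configuration in which the ribbon's boundary meets at the incenter, $w/2=\rin$. The genuine gap is your claim that this extended-fold computation ``is the engine for both bounds.'' It cannot be. In the $\pm 1$ configuration you are aiming at (width $1$, perimeter $\sqrt{3}$), the three extended folds would require total length $3\cot(\pi/6)=3\sqrt{3}$ of knot diagram --- three times the entire perimeter --- so the extended folds necessarily overlap one another and $\cot(\pi/6)$ carries no information; a tiling heuristic would wrongly forbid the small triangle, which is exactly the failure mode you flagged but did not resolve. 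The mechanism that actually produces $\sqrt{3}$ is a different length: the outer endpoint of the fold line at a vertex of angle $\alpha$ lies over the point of the incident edge at distance $(w/2)\tan(\alpha/2)$ from the vertex, and with one fold reversed the width can grow until \emph{adjacent fold-line endpoints} meet in the exterior region, i.e.\ until $s = 2\cdot(w/2)\tan(\pi/6) = w/\sqrt{3}$. Since $\tan(\pi/6)=\tfrac{1}{3}\cot(\pi/6)$, this is precisely what accounts for the factor-of-three drop from $3\sqrt{3}$ to $\sqrt{3}$ (compare the proof of Theorem~\ref{thm:equilateral-3-stick}, which turns on this quantity). Your proposal replaces this step with ``I expect to recover a valid configuration of total length $\sqrt{3}$,'' which is the entire content of case (2).

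The admissibility checks are also not closable by the tool you name. The small-width theorem of \cite{Den-FRF} produces some constant $M>0$ such that the ribbon is immersed with only double points for all $w<M$, but it gives no relation between $M$ and the critical widths $s/\sqrt{3}$ (for $uuu$) and $\sqrt{3}\,s$ (for $uuo$); on its own it yields $\Rib([\uwf])\le 3s/M$ with $M$ unknown, not the sharp constants. What is needed, and what \cite{DKTZ} supplies, is the width-specific analysis: for $uuu$ the three strips of ribbon cannot consistently overlap at a common point when all folds have the same type, which caps the width at $w/2\le\rin$ (\cite{DKTZ} Theorem~5.1); for $uuo$ the reversed fold permits the overlaps near the incenter to be ordered consistently, so that only the collision of adjacent fold lines limits the width. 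Identifying and proving these two geometric facts \emph{is} the proof; the rest of your outline is correct framing around them.
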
 

We fixed the width and in making this computation, we noticed that the structure of the folded ribbon dictated the folded ribbonlength. In the case where $\Lk(\uwf)=\pm 3$ and the folds are all of the same type, we can shrink the knot diagram until the ribbon's boundary meets in the interior of the knot diagram at the incenter of the equilateral triangle.\footnote{The incenter of a triangle is the intersection of the three interior angle bisectors of the triangle. This is also the center of the largest circle inscribed in a triangle, the incircle.} 

We made use of this observation and proved (see \cite{DKTZ} Theorem 5.1) for a folded ribbon $3$-stick unknot with ribbon linking number $\pm 3$ and width $w$, that $\frac{w}{2}\leq \rin$, where $\rin$ is the inradius of the knot diagram.\footnote{The inradius of a triangle is the radius of the incircle.} Note that this result means that in this setting,  $w\leq 2 \rin$, and we use this relationship to bound folded ribbonlength from below:
$$\frac{\Len(K)}{2\rin}\leq \Rib(\uwf).$$ We then combined this with the fact that amongst all triangles with the same perimeter the equilateral triangle has the maximum inradius, allowing us to prove the following:

\begin{theorem} [\cite{DKTZ}] \label{thm:3-stick-3}
 The minimum folded ribbonlength of any folded folded ribbon $3$-stick unknot $\uwf$ with ribbon linking number $\Lk(\uwf) = \pm 3$ is $\Rib([\uwf]) =3\sqrt{3}$.  This occurs when $\um$ is an equilateral triangle. 
\end{theorem}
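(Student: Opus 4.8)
The plan is to sandwich $\Rib([\uwf])$ between matching upper and lower bounds of $3\sqrt{3}$. The upper bound is immediate from Proposition~\ref{prop:3-stick-upper}, which already gives $\Rib([\uwf]) \le 3\sqrt{3}$ for ribbon linking number $\pm 3$ by exhibiting the equilateral-triangle example. The entire remaining task is therefore the matching lower bound $\Rib([\uwf]) \ge 3\sqrt{3}$, taken over all non-degenerate triangles $\um$ whose corresponding folded ribbon unknot satisfies $\Lk(\uwf) = \pm 3$.

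For the lower bound, the first step is to pin down the geometric constraint between the width $w$ and the triangle $\um$. By Proposition~\ref{prop:n-linking} (see also the discussion preceding Remark~\ref{rmk:fold-info3}), ribbon linking number $\pm 3$ forces all three folds to be of the same type ($uuu$ or $ooo$), so the ribbon folds inward at every vertex. I would argue that in this configuration the inner boundary of the ribbon runs along the three lines parallel to the edges at distance $\w$ inside the triangle, and that these lines can bound a nondegenerate region only when $\w \le \rin$, the region collapsing to the incenter at equality. This is exactly the inequality $\w \le \rin$, equivalently $w \le 2\rin$, recorded as \cite{DKTZ} Theorem~5.1, which I would invoke rather than re-derive.

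With $w \le 2\rin$ in hand, I would bound the folded ribbonlength from below by
\[
\Rib(\uwf) = \frac{\Len(\um)}{w} \ge \frac{\Len(\um)}{2\rin},
\]
and then minimize the right-hand side over all triangles. Writing $L = \Len(\um)$ for the perimeter, $s = L/2$ for the semiperimeter, and $A$ for the area, the identity $\rin = A/s$ rewrites this as $\Len(\um)/(2\rin) = L^2/(4A)$. The isoperimetric inequality for triangles — equivalently, the fact stated in the excerpt that among all triangles of fixed perimeter the equilateral triangle maximizes the inradius (and the area) — shows $L^2/(4A)$ is minimized by the equilateral triangle. A direct computation for the equilateral triangle of side $a$, with $L = 3a$ and $A = \tfrac{\sqrt{3}}{4}a^2$, gives $L^2/(4A) = 3\sqrt{3}$. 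Hence $\Rib(\uwf) \ge 3\sqrt{3}$ for every admissible $\um$, with equality exactly when $\um$ is equilateral. Taking the infimum matches the upper bound and yields $\Rib([\uwf]) = 3\sqrt{3}$.

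The hard part is the geometric inequality $\w \le \rin$. The isoperimetric step and the closing arithmetic are routine and scale-invariant, so the actual perimeter never enters; but establishing that the width of an $\Lk = \pm 3$ ribbon is genuinely capped by twice the inradius requires verifying that the obstruction to widening the ribbon is precisely the collapse of the inner parallel region to the incenter, rather than some unrelated self-intersection of the immersed ribbon. Since this is exactly the content of \cite{DKTZ} Theorem~5.1, I would treat it as a citable input and devote the proof to assembling the bound; in a fully self-contained argument this fold-interaction analysis would be the crux.
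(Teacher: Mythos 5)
Your proposal is correct and follows essentially the same route as the paper: the upper bound is taken from Proposition~\ref{prop:3-stick-upper}, and the lower bound combines the inequality $w \le 2\rin$ from \cite{DKTZ} Theorem~5.1 with the fact that among triangles of fixed perimeter the equilateral one maximizes the inradius. Your rewriting of $\Len(\um)/(2\rin)$ as $L^2/(4A)$ is merely an equivalent formulation of that same isoperimetric step, so there is no substantive difference.
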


A natural question to ask is whether the folded ribbonlength bound in Theorem~\ref{thm:3-stick-3}  is the smallest amongst all unknots whose corresponding folded ribbon knot has ribbon linking number $\pm 3$ and which are topological M\"obius bands. We have seen examples in other papers \cite{Den-FRF, KMRT} that increasing the number of edges in the knot diagram can lead to a decrease in the  folded ribbonlength.  However, we believe Theorem~\ref{thm:3-stick-3} does indeed give the minimum ribbonlength $\Rib(K) = 3\sqrt{3}$ for such folded ribbon unknots. This leads us to conjecture.

\begin{conjecture}
The minimum folded ribbonlength of any folded ribbon unknot $\uwf$ which is a M\"obius band with ribbon linking number $\Lk(\uwf) =\pm 3$ is 
$$\Rib([\uwf]) = \inf_{\substack{ \uwf \in \text{M\"ob},\\ \Lk(\uwf)=\pm 3}} \Rib(\uwf) = 3\sqrt 3.$$

\label{conj:3-stick-3}
\end{conjecture}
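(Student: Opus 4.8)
The plan is to match the upper bound already in hand: the equilateral $3$-stick unknot of Theorem~\ref{thm:3-stick-3} is a M\"obius band with $\Lk(\uwf)=\pm 3$ and $\Rib(\uwf)=3\sqrt 3$, so it furnishes $\Rib([\uwf])\le 3\sqrt 3$. Everything therefore rests on the lower bound: I must show that \emph{every} folded ribbon unknot $\uwf$ which is a M\"obius band with $\Lk(\uwf)=\pm 3$ has $\Rib(\uwf)\ge 3\sqrt 3$. Theorem~\ref{thm:RibLowerBd} only yields $\Rib([\uwf])\ge 3$, so the entire content of the conjecture is to close the gap between $3$ and $3\sqrt 3\approx 5.196$. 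That earlier bound is loose because Corollary~\ref{cor:triangles} charges each fold only its \emph{isolated} minimum ribbonlength $1$, attained at fold angle $\tfrac{\pi}{2}$; it ignores both the straight ribbon joining consecutive folds and the fact that, once the diagram must close into a loop, the fold angles cannot all equal $\tfrac{\pi}{2}$.

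First I would cut down the competing configurations. By Theorem~\ref{thm:link-twist-writhe}, on a M\"obius band $\Lk(\uwf)=\Tw(\uwf)+2\Wr(\uwf)=\pm 3$. I would argue that the infimum is approached by diagrams of zero writhe, so that $\Tw(\uwf)=\pm 3$ and, as in the proof of Theorem~\ref{thm:RibLowerBd}, the linking number is carried by a net of three same-sign folds. This writhe-reduction is already a delicate point: one must rule out that trading folds for crossings is ever cheaper, for instance realizing $\Lk=3$ by $\Wr=1,\ \Tw=1$ with a single fold and a Reidemeister-$1$ kink. Granting zero writhe, I would then show that any extra canceling pair of folds, or any fold of angle $0$, only adds ribbonlength, so a minimizer uses exactly three folds of nonzero angle. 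A closed polygon with exactly three genuine turns is a triangle, which returns us to the $3$-stick situation resolved by Theorem~\ref{thm:3-stick-3}.

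The crux, and the step I expect to be the main obstacle, is proving that \emph{more} sticks never help. As the paper itself stresses, for many knot types additional edges genuinely lower ribbonlength, so this must be argued, not assumed. The natural route is to generalize the inradius estimate behind Theorem~\ref{thm:3-stick-3}, where the immersion condition forces $\tfrac{w}{2}\le\rin$ and the equilateral triangle maximizes inradius at fixed perimeter. But here the naive version of this idea points the \emph{wrong} way: a regular $n$-gon used at width $w=2\rin$ would have folded ribbonlength $n\tan(\pi/n)$, which \emph{decreases} in $n$, and a regular pentagon with folding $uuuuo$ has $\Lk(\uwf)=\pm 3$ by Proposition~\ref{prop:n-linking}. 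So the true obstruction cannot be the inradius alone; it must come from a sharper analysis showing that a canceling overfold (or the disjoint-fold requirement on a non-convex diagram) forces the admissible width strictly below $2\rin$, by an amount that keeps $\Len(\uwf)/w\ge 3\sqrt 3$.

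Hence the decisive technical ingredient I would need is a width bound tailored to the folds rather than to the global inradius: at each fold I would bound $\tfrac{w}{2}$ by a local quantity measuring how far its two ribbon sheets may extend before colliding with a neighboring fold or crossing the diagram, and then minimize $\Len(\uwf)/w$ subject to these local constraints together with the closure and net-linking conditions. Carrying this optimization out, and showing it is genuinely minimized by the equilateral triangle rather than by a rounder many-stick competitor, is exactly the hard analytic step, and is, I expect, why the statement remains a conjecture rather than a theorem.
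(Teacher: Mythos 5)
You are reviewing a statement that the paper itself leaves as Conjecture~\ref{conj:3-stick-3}: the paper offers no proof, only the evidence of Theorem~\ref{thm:3-stick-3} (the $3$-stick case) and the weak general bound of Theorem~\ref{thm:RibLowerBd}. Your proposal, as you yourself conclude, is likewise not a proof but a diagnosis of why the statement is open, and as a diagnosis it is largely accurate: the upper bound $3\sqrt{3}$ is Theorem~\ref{thm:3-stick-3}, the whole content is the lower bound over \emph{all} M\"obius-band competitors, and your reduction plan (force writhe zero, then exactly three nonzero-angle folds, then a triangle, then quote Theorem~\ref{thm:3-stick-3}) stalls exactly at the two steps you flag. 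One refinement: Theorem~\ref{thm:RibLowerBd} carries the hypothesis $\Wr(\uwf)=0$, so even the lower bound of $3$ you cite is not available for competitors with nonzero writhe; the paper gives no lower bound at all in that case, which makes the writhe-reduction step even more essential than your write-up suggests.

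However, your central piece of evidence that the inradius idea ``points the wrong way'' is flawed, and the flaw matters because it misidentifies where the difficulty lies. You claim a regular pentagon with folding $uuuuo$ (which indeed has $\Lk(\uwf)=\pm 3$ by Proposition~\ref{prop:n-linking}) could be used at width $w=2\rin$, yielding ribbonlength $5\tan(\pi/5)\approx 3.63<3\sqrt{3}$. That width is not admissible. By Lemma~\ref{lem:fold-meet} --- which by Remark~\ref{rmk:obtuse-only} applies regardless of folding information, hence regardless of ribbon linking number --- the width of a folded ribbon on an obtuse-angled $n$-gon is capped by the requirement that adjacent fold lines stay disjoint; for the regular pentagon with perimeter $1$ this caps $w$ at $1/\bigl(5\cot(\pi/5)\bigr)\approx 0.145$, well below $2\rin\approx 0.275$. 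Consequently $\Rib(\uwf)\geq 5\cot(\pi/5)\approx 6.88>3\sqrt{3}$, and since $n\cot(\pi/n)$ is increasing in $n$, Corollary~\ref{cor:minngon} already eliminates \emph{every} obtuse-angled (in particular every regular) odd $n$-gon with $n\geq 5$ as a competitor. So convex obtuse-angled diagrams are not the obstacle; the ``local width bound tailored to the folds'' you call for is essentially what Lemma~\ref{lem:fold-meet} provides in that regime. The genuinely open territory is narrower and different from what you describe: convex diagrams with acute fold angles (where fold-line collisions are no longer the binding constraint), non-convex diagrams, and diagrams with crossings and nonzero writhe --- the last being where even a crude lower bound is currently missing.
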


%%%%%%%%%%%%%%%%%%%

\subsection{Ribbonlength for folded ribbon $3$-stick unknots with $\Lk(\uwf)=\pm1$. }\label{sect:3stick}
Recall that the ribbon linking number is either $\pm1$, or $\pm 3$ for folded ribbon $3$-stick unknots. In this section we investigate the remaining case where  $\Lk(\uwf)=\pm1$.  By Proposition~\ref{prop:n-linking}, this means one fold is of different type to the other two.  The reader is encouraged to fold a strip of paper as they follow along with these arguments. If we fix the unknot diagram and try to increase the width of the ribbon, we see that the boundary of the ribbon does not intersect itself in the incenter of the knot diagram, unlike the $\Lk(\uwf)=\pm3$ case. Rather, the width of the ribbon can increase until the corners of the folds meet. We will use this principle to show that the maximum width occurs when the triangle is equilateral.

Before we state and prove a theorem, we need to set up some notation.   If we look at Figure~\ref{fig:3-stick-XYZ} below, we see the $3$-stick unknot is the thick black line and has vertices $A$, $B$, $C$, with corresponding fold angles $\alpha_1$, $\alpha_2$, and $\alpha_3$. We assume the width of the corresponding folded ribbon unknot is $w$. We have labeled the fold line at vertex $A$ as $A_1A_2$, the fold line at $B$ as $B_1B_2$, and the fold line at $C$ as $C_1C_2$. (These labels move in a counterclockwise direction around the figure.) We then extend the fold lines until they intersect with one another, labeling the intersection points $D,E,$ and $F$ as shown in Figure~\ref{fig:3-stick-XYZ}. We have indicated the exterior boundary of the folded ribbon by lines $A_2B_1$,   $B_2C_1$, $C_2A_1$,  and we denote the lengths of these lines by $x=|A_2B_1|$, $y=|B_2C_1|$, and $z=|C_2A_1|$.  To simplify our diagram, we have not shown the boundary of the folded ribbon that intersects the interior of triangle $ABC$ (that is, the lines $A_2C_1$, $B_2A_1$ and $C_2B_1$).

\begin{figure}[htbp]
    \centering
    \begin{overpic}[scale=1]{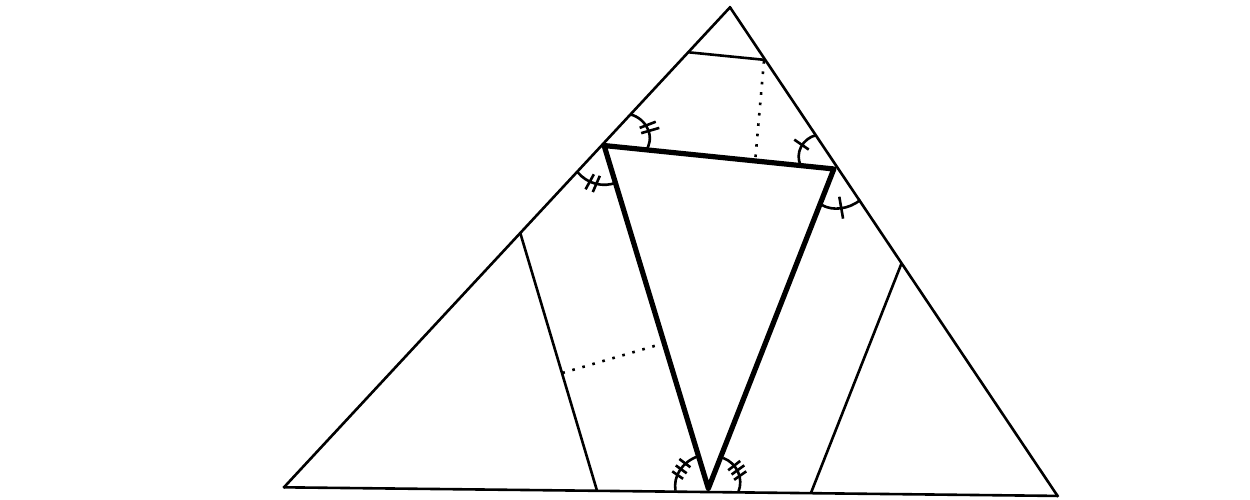}
    \put(67.5,27){$A$}
    \put(62.5, 24.5) {$\alpha_1$}
    \put(72, 19.75) {$A_1$}
    \put(62,34.5) {$A_2$}
    \put(59, 24){$G$}
    \put (58, 30) {$\frac{w}{2}$}
    \put(64.5, 18.5) {$\frac{\pi-\alpha_1}{2}$}
    \put(57, 33) {$x$}
    \put(57.5,40){$E$}
    \put(45.5,28.5){$B$}
    \put(51.25, 36) {$B_1$}
    \put(38,22) {$B_2$}
    \put(50, 25.5) {$\alpha_2$}
    \put(43.5, 21) {$\frac{\pi-\alpha_2}{2}$}
    \put(42, 11.5) {$y$}
    \put (48, 8) {$\frac{w}{2}$}
    \put(55.5,-2){$C$}
    \put(47,-2) {$C_1$}
    \put(63, -2.25) {$C_2$}
    \put(55.25, 7) {$\alpha_3$}
    \put(59, 3) {$\frac{\pi-\alpha_3}{2}$}
    \put(69, 8) {$z$}
    \put(20,0.5){$F$}
    \put(85,-0.5){$D$}
    \end{overpic}
    \caption{A 3-stick unknot $\um=\triangle ABC$, with the corresponding folded ribbon unknot having fold lines $A_1A_2$, $B_1B_2$ and $C_1C_2$. The lines containing the fold lines intersect at points $D, E, F$. The parts of the boundary of the folded ribbon unknot which meet outside of $\triangle ABC$ are $A_2B_1$,   $B_2C_1$, $C_2A_1$.}
    \label{fig:3-stick-XYZ}
\end{figure}

As described above, when we fix $\triangle ABC$, the maximum ribbon width occurs when the fold lines intersect in the exterior region of the knot diagram; that is, when one of the distances $x$, $y$ or $z$ is $0$. We can then fix the perimeter of the triangle $\triangle ABC$, and ask ourselves which angles give the maximum width. In answering this question we prove the following:

\begin{theorem} \label{thm:equilateral-3-stick}
Suppose $\uwf$ is a folded ribbon $3$-stick unknot with $\Lk(\uwf)=\pm1$ and assume $\Len(\um)=1$. Then the ribbon width is bounded from above by $w\le\frac{1}{\sqrt{3}}$, and $w$ achieves its maximum value when $\um$ is an equilateral triangle.
\end{theorem}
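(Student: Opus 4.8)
The plan is to reduce the statement to a constrained optimization over the three fold angles $\alpha_1,\alpha_2,\alpha_3$ of $\triangle ABC$, where the only real geometric input is how a single fold shortens the exterior boundary of its two adjacent edges. Referring to Figure~\ref{fig:3-stick-XYZ}, the fold line at $A$ passes through $A$ and, being perpendicular to the angle bisector, makes angle $\frac{\pi-\alpha_1}{2}$ with edge $AB$; the exterior boundary segment $A_2B_1$ runs parallel to $AB$ at perpendicular distance $\frac{w}{2}$. Hence the foot of $A_2$ on the edge lies a distance $\frac{w}{2}\big/\tan\!\frac{\pi-\alpha_1}{2}=\frac{w}{2}\tan\!\frac{\alpha_1}{2}$ from $A$, so the fold at $A$ ``bites off'' a length $\frac{w}{2}\tan\!\frac{\alpha_1}{2}$ of that edge's exterior boundary, and likewise the fold at $B$ consumes $\frac{w}{2}\tan\!\frac{\alpha_2}{2}$. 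First I would record the resulting identity $x=|AB|-\frac{w}{2}\big(\tan\frac{\alpha_1}{2}+\tan\frac{\alpha_2}{2}\big)$ together with its two cyclic analogues for $y$ and $z$, noting that the derivation is uniform in whether a fold angle is acute or obtuse.

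Next I would add these three identities. Using $\Len(\um)=|AB|+|BC|+|CA|=1$, the mixed cross terms collect to give
$$x+y+z = 1 - w\Big(\tan\tfrac{\alpha_1}{2}+\tan\tfrac{\alpha_2}{2}+\tan\tfrac{\alpha_3}{2}\Big).$$
Because the folded ribbon is immersed, the exterior corners of neighboring folds cannot cross, which is exactly the requirement $x,y,z\ge 0$; for $\Lk(\uwf)=\pm1$ the maximal admissible width for a fixed triangle is attained precisely when one of $x,y,z$ vanishes (the case where two folds' exterior corners meet at one of $D,E,F$). Non-negativity of the sum then yields $w\le \big(\tan\frac{\alpha_1}{2}+\tan\frac{\alpha_2}{2}+\tan\frac{\alpha_3}{2}\big)^{-1}$. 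It remains to minimize $S=\tan\frac{\alpha_1}{2}+\tan\frac{\alpha_2}{2}+\tan\frac{\alpha_3}{2}$ over all triangles. Writing $\beta_i=\alpha_i/2\in(0,\tfrac{\pi}{2})$ with $\beta_1+\beta_2+\beta_3=\tfrac{\pi}{2}$, strict convexity of $\tan$ on $(0,\tfrac{\pi}{2})$ and Jensen's inequality give $S\ge 3\tan\frac{\pi}{6}=\sqrt3$, with equality if and only if each $\beta_i=\frac{\pi}{6}$, i.e.\ $\triangle ABC$ is equilateral. Combining the two bounds gives $w\le\frac{1}{\sqrt3}$. To pin down the equality case I would check that the equilateral triangle saturates both inequalities at once: by symmetry $x=y=z$, so all three vanish together exactly at $w=\frac{1}{\sqrt3}$, where also $S=\sqrt3$; this exhibits the bound as attained and shows the equilateral triangle is the unique maximizer of $w$.

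The main obstacle is the geometric content of the first paragraph rather than the optimization, which is a one-line convexity argument once the setup is in place. Specifically, I would need to justify carefully that the relevant width constraint in the $\Lk(\uwf)=\pm1$ regime is governed by $x,y,z\ge 0$ (the folds' exterior corners meeting), in contrast to the $\Lk(\uwf)=\pm3$ case where the controlling constraint is the ribbon boundary meeting at the incenter, and to confirm that the bite formula $\frac{w}{2}\tan\frac{\alpha_i}{2}$ holds verbatim for obtuse as well as acute fold angles. The over/under choice that distinguishes $\Lk=\pm1$ from $\Lk=\pm3$ does not affect the crease positions, so the three boundary identities are unchanged by it; verifying this independence is the step where I would be most careful.
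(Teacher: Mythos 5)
Your proposal is correct, and its geometric core coincides with the paper's proof of Theorem~\ref{thm:equilateral-3-stick}: you derive exactly the paper's edge-consumption identities $x=|AB|-\frac{w}{2}\tan\frac{\alpha_1}{2}-\frac{w}{2}\tan\frac{\alpha_2}{2}$ (and cyclic analogues, valid for acute and obtuse fold angles alike), sum them against $\Len(\um)=1$, and use non-negativity of $x,y,z$ to obtain $w\le\big(\tan\frac{\alpha_1}{2}+\tan\frac{\alpha_2}{2}+\tan\frac{\alpha_3}{2}\big)^{-1}$, which is the paper's Equation~\ref{eq:inequality}. Where you genuinely diverge is the optimization step. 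The paper minimizes $f=\tan\frac{\alpha_1}{2}+\tan\frac{\alpha_2}{2}+\tan\frac{\alpha_3}{2}$ by Lagrange multipliers, which forces a case analysis on the sign ambiguities $\pm\cos\frac{\alpha_i}{2}=\pm\cos\frac{\alpha_j}{2}$ (its Cases~1--3) and then certifies minimality only by comparing $f$ at the single nearby point $(\frac{\pi}{6},\frac{\pi}{3},\frac{\pi}{2})$; strictly speaking that comparison identifies the unique interior critical point but leaves global minimality resting on boundary behavior that the paper spells out only in the $n$-gon version (Appendix~\ref{append:equiangular}). Your substitution $\beta_i=\alpha_i/2$ with $\beta_1+\beta_2+\beta_3=\frac{\pi}{2}$, followed by Jensen's inequality for the strictly convex $\tan$ on $(0,\frac{\pi}{2})$, gives $S\ge 3\tan\frac{\pi}{6}=\sqrt{3}$ together with the equality case $\beta_1=\beta_2=\beta_3$ in one stroke: it is shorter, it settles global minimality rigorously, and it delivers uniqueness of the equilateral maximizer without any case analysis. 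The caveat you flag yourself --- that in the $\Lk(\uwf)=\pm1$ regime the binding constraint is adjacent fold corners meeting ($x,y,z\ge 0$) rather than the ribbon boundary meeting at the incenter as in the $\Lk(\uwf)=\pm3$ case --- is handled at the same informal level in the paper (the discussion preceding the theorem and the definition of $x,y,z$ as lengths), so your argument is at no disadvantage there; likewise your closing check that the equilateral triangle attains $w=\frac{1}{\sqrt{3}}$ with $x=y=z=0$ matches the role played by Proposition~\ref{prop:3-stick-upper} in the paper's Corollary~\ref{cor:3stickbound}.
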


\begin{proof}
We will use the notation for the folded ribbon $3$-stick unknot  $\um=\triangle ABC$ given in Figure~\ref{fig:3-stick-XYZ}. We aim to maximize the width $w$ of the ribbon. We have two constraints: first $\alpha_1+\alpha_2+\alpha_3= \pi$ and second, $\Len(\um) =|AB|+|BC|+|CA|=1$. 

We first find $x=|A_2B_1|$, $y=|B_2C_1|$, and $z=|C_2A_1|$. In Figure~\ref{fig:3-stick-XYZ}, we know from the geometry of the fold that angle $\angle A_2AB=\angle A_1AC = \frac{\pi-\alpha_1}{2}$.  We then drop a perpendicular line segment $A_2G$ to $AB$.  Since we have fixed the width $w$,  then $|A_2G| = \frac{w}{2}$.
In right triangle $\triangle AGA_2$,  we know $\angle GA_2A = \frac{\alpha_1}{2}$, thus $|AG| =\frac{w}{2}\tan(\frac{\alpha_1}{2})$. Repeating these observations allows us to deduce that
\begin{align}
x&=|AB|-\frac{w}{2}\tan\frac{\alpha_1}{2}-\frac{w}{2}\tan\frac{\alpha_2}{2},\label{eq:edge1}\\ 
y&=|CB|-\frac{w}{2}\tan\frac{\alpha_2}{2}-\frac{w}{2}\tan\frac{\alpha_3}{2},\label{eq:edge2} \\
z&=|CA|-\frac{w}{2}\tan\frac{\alpha_3}{2}-\frac{w}{2}\tan\frac{\alpha_1}{2}.\label{eq:edge3}
\end{align}

Since $\Delta ABC$ is non-degenerate, we have $x< |AB|$, $y< |BC|$, $z< |CA|$, and so $0\leq x+y+z < 1$. Substituting Equations~\ref{eq:edge1}, \ref{eq:edge2} and \ref{eq:edge3} and rearranging, we find

 \begin{align*}
0&\le |AC|+|AB|+|BC| -2\w(\tan\Ai+\tan\Aii+\tan\Aiii) <1,\\
0&\le 1 -w(\tan\Ai+\tan\Aii+\tan\Aiii)<1,\\
-1&\le -w(\tan\Ai+\tan\Aii+\tan\Aiii) < 0.
\end{align*}
Dividing through by $-w$, we see that 
\begin{equation}
0 < \tan\Ai+\tan\Aii+\tan\Aiii\le\frac{1}{w}.
\label{eq:inequality}
\end{equation}

Thus to maximize the width, we need to  minimize the  function $f(\alpha_1,\alpha_2,\alpha_3):=\tan\Ai+\tan\Aii+\tan\Aiii$ subject to the constraint function $g(\alpha_1,\alpha_2,\alpha_3):=\alpha_1+\alpha_2+\alpha_3-\pi=0$. 
We use the method of Lagrange multipliers with the Lagrangian function \begin{align*}
\mathcal{L}(\alpha_1,\alpha_2,\alpha_3,\lambda),
&=f(\alpha_1,\alpha_2,\alpha_3)-\lambda g(\alpha_1,\alpha_2,\alpha_3).
\end{align*}
We set the gradient of the Lagrangian function equal to 0 and see that
\begin{align*}
0= \nabla\mathcal{L}(\alpha_1,\alpha_2,\alpha_3,\lambda)
&=\nabla f(\alpha_1,\alpha_2,\alpha_3)-\lambda\nabla g(\alpha_1,\alpha_2,\alpha_3),\\
0&= \nabla(\tan\Ai+\tan\Aii+\tan\Aiii)-\lambda\nabla(\alpha_1+\alpha_2+\alpha_3-\pi),\\
0&= \langle \frac{1}{2}\sec^2\Ai, \frac{1}{2}\sec^2\Aii, \frac{1}{2}\sec^2\Aiii\rangle - \lambda\langle1,1,1\rangle.
\end{align*}
We then set up a system of equations as follows:
\begin{align}
\lambda &= \frac{1}{2}\sec^2\Ai,  \label{eq:a1} \\
\lambda &= \frac{1}{2}\sec^2\Aii,   \label{eq:a2} \\
\lambda &= \frac{1}{2}\sec^2\Aiii,  \label{eq:a3} \\
\pi&=\alpha_1+\alpha_2+\alpha_3.   \label{eq:sum}
\end{align}
Equations~\ref{eq:a1} , \ref{eq:a2}, and \ref{eq:a3}  yield
$$\sec^2\Ai = \sec^2\Aii = \sec^2\Ai,$$
or,
\begin{equation}\pm\cos\Ai = \pm\cos\Aii = \pm\cos\Aiii.\label{eq:equal}
\end{equation}
We recall that $\triangle ABC$ is non-degenerate, and so $0< \alpha_i<\pi$ for all $i$. This means $\cos\alpha_i=\cos\alpha_j$ has just one solution: $\alpha_i = \alpha_j$. We also recall the trigonometric identity $-\cos(\theta)=\cos(\pi-\theta)$. We use these two facts to reduce the number of cases we need to consider when solving Equations~\ref{eq:sum} and \ref{eq:equal}.

\begin{enumerate}
    \item[Case~1.]  Assume that $\cos\Ai = \cos\Aii = \cos\Aiii$. We see $\alpha_1=\alpha_2=\alpha_3$, and combining this with Equation~\ref{eq:sum} gives
    \begin{align*}
    \pi&=\alpha_1+\alpha_2+\alpha_3=3\alpha_1,   \\
    \frac{\pi}{3}&=\alpha_1=\alpha_2=\alpha_3.
    \end{align*}
    \item[Case~2.] Without loss of generality, assume that  $-\cos\Ai = \cos\Aii = \cos\Aiii$. Then rewriting, $$\cos(\pi-\frac{\alpha_1}{2}) = \cos\Aii = \cos\Aiii.$$ 
    We then see that $2\pi-\alpha_1=\alpha_2=\alpha_3$, and so $2\pi=\alpha_1+\alpha_2$. However, we assumed that $0<\alpha_1,\alpha_2<\pi$ which means $0<\alpha_1+\alpha_2<2\pi$, giving a contradiction.

     \item[Case 3.] Without loss of generality, assume that  $-\cos\Ai = -\cos\Aii = \cos\Aiii$. Then rewriting, $$\cos(\pi-\frac{\alpha_1}{2}) = \cos(\pi-\Aii) = \cos\Aiii.$$ 
    We then see that $2\pi-\alpha_1=2\pi-\alpha_2=\alpha_3$, and so $2\pi=\alpha_2+\alpha_3$.  As in Case~2, this is a contradiction to our assumption about $\alpha_2$ and $\alpha_3$.
    \end{enumerate}

To conclude, we have found that $\frac{\pi}{3}=\alpha_1=\alpha_2=\alpha_3$ is the only extreme value of our function $f$ subject to the constraint $g$.  To check whether this solution yields a minimum or maximum value for $f(\alpha_1, \alpha_2, \alpha_3)$, we compare values of the function at a nearby point. We compute
\begin{equation*}
    f(\frac{\pi}{6},\frac{\pi}{3},\frac{\pi}{2})=2-\sqrt{3} + \frac{1}{\sqrt{3}} +1 = 3-\frac{2\sqrt{3}}{3}>\sqrt{3}=f(\frac{\pi}{3}, \frac{\pi}{3}, \frac{\pi}{3}). 
    \label{eq:solution}    
    \end{equation*} 
    
Thus, the solution $\frac{\pi}{3}=\alpha_1=\alpha_2=\alpha_3$ minimizes $f$. That is, the knot diagram $\um$ is an equiangular triangle. Using the Law of Sines, we see that $\um$ must be an equilateral triangle. When we substitute this result into Equation~\ref{eq:inequality}, we see that 
$$ \sqrt{3}\le\tan\Ai+\tan\Aii+\tan\Aiii\le\frac{1}{w}.\\
$$
In summary, the ribbon width is bounded from above by $w\leq \frac{1}{\sqrt{3}}$, and ribbon width achieves its maximum value when $\um$ is an equilateral triangle.
\end{proof}

\begin{corollary}\label{cor:3stickbound}
The minimum folded ribbonlength of any folded ribbon $3$-stick unknot with ribbon linking number $\Lk(\uwf)=\pm1$ is bounded from below by $\sqrt{3}\le\Rib([\uwf])$. The minimum value is achieved when $\um$ is an equilateral triangle.
\end{corollary}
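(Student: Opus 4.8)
The plan is to read this corollary directly off Theorem~\ref{thm:equilateral-3-stick} by converting its upper bound on the ribbon width into a lower bound on the folded ribbonlength. The key observation is that the folded ribbonlength $\Rib(\uwf)=\Len(\um)/w$ is scale-invariant, so I may rescale any folded ribbon $3$-stick unknot $\uwf$ with $\Lk(\uwf)=\pm1$ so that $\Len(\um)=1$ without changing its ribbonlength. Under this normalization $\Rib(\uwf)=1/w$, and minimizing ribbonlength for a fixed triangle shape corresponds exactly to maximizing the admissible width $w$.

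First I would fix such a normalized $\uwf$ and apply Theorem~\ref{thm:equilateral-3-stick}, which gives $w\le\frac{1}{\sqrt{3}}$. Dividing, this yields $\Rib(\uwf)=1/w\ge\sqrt{3}$. Since this inequality holds for every folded ribbon $3$-stick unknot with $\Lk=\pm1$ (after normalizing), taking the infimum over all such unknots in the definition of $\Rib([\uwf])$ immediately gives the lower bound $\sqrt{3}\le\Rib([\uwf])$. Equivalently, I can chain the inequalities established inside the proof of the theorem: the constraint $0\le x+y+z<1$ is precisely $\tan\Ai+\tan\Aii+\tan\Aiii\le\frac{1}{w}$, so $\Rib(\uwf)=1/w\ge\tan\Ai+\tan\Aii+\tan\Aiii\ge\sqrt{3}$, with the value $\sqrt{3}$ occurring at the equilateral minimizer of $f$.

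To see that the bound is sharp, I would exhibit the extremal configuration: take $\um$ to be an equilateral triangle with $\Len(\um)=1$ and let the width grow to its maximal admissible value, where (in the notation of Figure~\ref{fig:3-stick-XYZ}) one of the exterior lengths $x,y,z$ vanishes and the corners of adjacent folds just meet. Theorem~\ref{thm:equilateral-3-stick} guarantees $w=\frac{1}{\sqrt{3}}$ there, so $\Rib(\uwf)=\sqrt{3}$, matching the lower bound.

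The only point requiring care, and the main (minor) obstacle, is the attainment claim: I must confirm that this maximal-width equilateral configuration is still a genuine folded ribbon unknot with $\Lk=\pm1$, so that the infimum is actually realized rather than merely approached, and not a degenerate limit in which the ribbon develops extra self-intersections or the linking number changes. This reduces to checking that the boundary case where one of $x,y,z$ equals $0$ still satisfies the consistency conditions for a folded ribbon knot from Section~\ref{sect:model}; all of the quantitative work has already been carried out in the theorem.
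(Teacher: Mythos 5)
Your proposal is correct and takes essentially the same route as the paper: by scale-invariance set $\Len(\um)=1$ and invert the width bound $w\le\frac{1}{\sqrt{3}}$ of Theorem~\ref{thm:equilateral-3-stick} to obtain $\sqrt{3}\le\frac{1}{w}=\Rib(\uwf)$, whence the bound on the infimum follows. The attainment concern you raise at the end is not actually needed here --- the paper settles sharpness not by checking the maximal-width boundary configuration but by citing the explicit equilateral construction of Proposition~\ref{prop:3-stick-upper}, which is combined with this corollary in Corollary~\ref{cor:3-stick-1}.
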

\begin{proof} Since folded ribbonlength is a scale-invariant quantity, we set $\Len(\um)=1$ and apply the results of Theorem~\ref{thm:equilateral-3-stick}. Then
$$\sqrt{3}\le\frac{1}{w}=\frac{\Len(K)}{w}=\Rib(\uwf).$$ 
\end{proof}

\begin{corollary}\label{cor:3-stick-1}
The minimum folded ribbonlength of any folded ribbon $3$-stick unknot $\uwf$ with ribbon linking number $\Lk(\uwf)=\pm1$ is $\Rib([\uwf])=\sqrt{3}$. This occurs when $\um$ is an equilateral triangle.
\end{corollary}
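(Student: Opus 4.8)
The plan is to pin down the value $\sqrt{3}$ by sandwiching the minimum folded ribbonlength between a matching upper bound and lower bound, both of which are already in hand from the preceding results. There is no new geometry to do here; the work is to combine two inequalities and then read off when equality holds.

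First I would recall the upper bound. Proposition~\ref{prop:3-stick-upper}(2) gives $\Rib([\uwf]) \leq \sqrt{3}$ whenever $\Lk(\uwf) = \pm 1$, a bound realized by the explicit equilateral-triangle folded ribbon unknot whose ribbonlength was computed in \cite{DKTZ}. Because the minimum folded ribbonlength is an infimum over all $3$-stick unknots of the prescribed topological type (a M\"obius band) and prescribed ribbon linking number, producing this single example already certifies $\Rib([\uwf]) \leq \sqrt{3}$. Next I would invoke the matching lower bound: Corollary~\ref{cor:3stickbound}, itself a direct consequence of the width estimate in Theorem~\ref{thm:equilateral-3-stick}, states $\sqrt{3} \leq \Rib([\uwf])$ for $\Lk(\uwf) = \pm 1$. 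Combining the two inequalities forces $\Rib([\uwf]) = \sqrt{3}$.

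It then remains to justify the claim about where the minimum is attained. Theorem~\ref{thm:equilateral-3-stick} shows, via the Lagrange-multiplier analysis minimizing $\tan\frac{\alpha_1}{2} + \tan\frac{\alpha_2}{2} + \tan\frac{\alpha_3}{2}$ subject to $\alpha_1 + \alpha_2 + \alpha_3 = \pi$, that the maximal admissible ribbon width (for fixed perimeter) is achieved exactly at $\alpha_1 = \alpha_2 = \alpha_3 = \frac{\pi}{3}$, with all degenerate critical configurations (Cases~2 and~3) eliminated by the constraint $0 < \alpha_i < \pi$. Since folded ribbonlength is scale-invariant and, after normalizing $\Len(\um) = 1$, equals $1/w$ where $w$ is the maximal admissible width, the infimum $\sqrt{3}$ is attained precisely when $\um$ is equilateral, matching the extremal example used for the upper bound.

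The genuinely substantive step is not in this corollary at all: it has been discharged in Theorem~\ref{thm:equilateral-3-stick}, where the true optimization and the case analysis ruling out non-equilateral critical points are carried out. Consequently the corollary is a short squeeze argument, and I expect no real obstacle beyond correctly citing the two bounds and invoking scale-invariance to convert the width bound of Theorem~\ref{thm:equilateral-3-stick} into the stated ribbonlength equality.
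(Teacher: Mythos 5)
Your proposal is correct and follows essentially the same route as the paper: the paper's proof is exactly the squeeze combining the upper bound of Proposition~\ref{prop:3-stick-upper} with the lower bound of Corollary~\ref{cor:3stickbound}, noting that both results pin the extremal configuration to the equilateral triangle. Your additional remarks tracing the lower bound back to the Lagrange-multiplier analysis in Theorem~\ref{thm:equilateral-3-stick} and invoking scale-invariance are accurate elaborations of what those cited results already contain.
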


\begin{proof} Proposition~\ref{prop:3-stick-upper} and Corollary~\ref{cor:3stickbound} allow us to bound the folded ribbonlength above and below by $\sqrt{3}$. These results also show the minimum occurs when $\um$ is an equilateral triangle. 
\end{proof}

Rather than just restricting our attention to folded ribbon $3$-stick unknots, we can minimize folded ribbonlength amongst all folded ribbon unknots which are M\"obius bands with ribbon linking number $\pm 1$. We conjecture that Corollary~\ref{cor:3-stick-1} still gives the minimum folded ribbonlength.

\begin{conjecture} The minimum folded ribbonlength of any folded ribbon unknot $\uwf$ which is a M\"obius band with ribbon linking number $\Lk(\uwf)=\pm 1$ is
$$\Rib([\uwf]) =  \inf_{\substack{ \uwf \in \text{M\"ob},\\ \Lk(\uwf)=\pm 1}} = \sqrt{3}.$$
\label{conj:3-stick-1}
\end{conjecture}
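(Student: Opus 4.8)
The upper bound $\Rib([\uwf])\le\sqrt3$ is already delivered for free by Corollary~\ref{cor:3-stick-1}, since the equilateral $3$-stick unknot is itself a M\"obius band with $\Lk(\uwf)=\pm1$. So the whole content of the conjecture is the matching lower bound: every folded ribbon unknot $\uwf$ that is a M\"obius band with $\Lk(\uwf)=\pm1$ must satisfy $\Rib(\uwf)\ge\sqrt3$. Because $\uwf$ is a M\"obius band, its diagram $\um$ has an odd number of sticks $n\ge3$, and the case $n=3$ is exactly Corollary~\ref{cor:3-stick-1}; so I would assume $n\ge5$. Theorem~\ref{thm:RibLowerBd} only yields $\Rib\ge1$ here, which is too weak, and the regular-$n$-gon machinery leading to Corollary~\ref{cor:minngon} is restricted to obtuse fold angles and therefore cannot even see the acute equilateral triangle (fold angles $\tfrac{\pi}{3}$) that realizes the conjectured minimum. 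A genuinely new lower-bound argument is thus required, and my plan is to reduce the problem to a constrained minimization over the fold angles, in the spirit of Theorem~\ref{thm:equilateral-3-stick}.

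Normalize $\Len(\um)=1$. In the $3$-stick proof the binding obstruction was that the fold lines at the two endpoints of each edge cannot overrun one another, i.e. $x,y,z\ge0$, which is the statement that each edge accommodates the projected fold lines of its two incident vertices: $|e_i|\ge\frac{w}{2}\bigl(\tan\frac{\alpha_i}{2}+\tan\frac{\alpha_{i+1}}{2}\bigr)$. Summing over all $n$ edges and using that each vertex angle occurs in exactly two edges gives the key inequality
$$\Rib(\uwf)=\frac{1}{w}\ \ge\ \sum_{i=1}^{n}\tan\frac{\alpha_i}{2},$$
so it would suffice to show $\sum_i\tan\frac{\alpha_i}{2}\ge\sqrt3$ for every admissible fold-angle vector. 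For a convex diagram this is clean and decisive: closure forces $\sum_i\alpha_i=(n-2)\pi$, and since $x\mapsto\tan\frac{x}{2}$ is convex on $(0,\pi)$, Jensen's inequality gives $\sum_i\tan\frac{\alpha_i}{2}\ge n\tan\frac{(n-2)\pi}{2n}$, a quantity equal to $\sqrt3$ at $n=3$ and strictly increasing in $n$. Hence adding sticks while staying convex can only increase the folded ribbonlength, and the entire convex case of the conjecture is settled, with equality only for the equilateral triangle.

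The main obstacle is the non-convex, non-regular regime, which is where the conjecture truly lives. Once reflex vertices are permitted, closure becomes $\sum_i\varepsilon_i(\pi-\alpha_i)=2\pi$ with signs $\varepsilon_i=\pm1$, and one can then drive several fold angles $\alpha_i\to0$, making $\sum_i\tan\frac{\alpha_i}{2}$ arbitrarily small; the fold-line inequality alone therefore cannot finish the argument. The point that must be exploited is that a sharp fold is cheap in the ``$\tan$'' accounting but expensive in the fold itself: by Proposition~\ref{prop:extended-fold} its fold and extended-fold ribbonlengths blow up like $1/\sin\alpha_i$ and $\cot\frac{\alpha_i}{2}$, which forces the neighbouring edges to be long so that adjacent fold regions remain disjoint. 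The decisive difficulty is to fuse the fold-line constraint, the fold-region disjointness constraint, and the requirement that the signed folds still produce $\Lk=\pm1$ on a M\"obius band (Proposition~\ref{prop:n-linking}) into a single bound that never dips below $\sqrt3$, uniformly over all odd $n$ and all valid folding informations. Tracking how every cheap sharp fold is always paid for by length forced elsewhere — and ruling out a clever non-convex arrangement that beats $\sqrt3$, of the sort that does lower ribbonlength in other knot families — is precisely the step I expect to be hard, and is presumably why the statement remains only a conjecture.
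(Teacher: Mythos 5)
First, the framing: the statement you were handed is Conjecture~\ref{conj:3-stick-1}, which the paper itself does not prove---it is left open precisely because the lower bound over \emph{all} M\"obius-band unknots is unknown. So there is no proof in the paper to compare yours against, and your proposal, by its own admission, does not close the conjecture either; it proves two pieces of it, and those pieces are correct. The upper bound is indeed immediate from Corollary~\ref{cor:3-stick-1}. Your convex-case lower bound is also sound and worth keeping: for an embedded convex diagram traversed counterclockwise, every vertex is a left turn, so the fold lines at the two ends of an edge $e_i$ both slant over $e_i$ on its exterior side, running from the knot diagram to the outer ribbon boundary; a short computation shows these two segments cross exactly when $|e_i| < \frac{w}{2}\bigl(\tan\frac{\alpha_i}{2}+\tan\frac{\alpha_{i+1}}{2}\bigr)$, so the model's standing assumption that fold lines are disjoint forces $|e_i|\ge \frac{w}{2}\bigl(\tan\frac{\alpha_i}{2}+\tan\frac{\alpha_{i+1}}{2}\bigr)$, independently of the folding information. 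Summing over edges and applying Jensen's inequality to the convex function $x\mapsto\tan\frac{x}{2}$ under $\sum_i\alpha_i=(n-2)\pi$ gives $\Rib(\uwf)\ge\sum_i\tan\frac{\alpha_i}{2}\ge n\cot\frac{\pi}{n}\ge\sqrt3$, with equality forcing $n=3$ and the triangle equilateral. This is in fact a little stronger than what the paper proves: Proposition~\ref{prop:equiangular} and Corollary~\ref{cor:minngon} require all fold angles obtuse and use Lagrange multipliers, whereas your Jensen argument covers every convex diagram (acute angles included) and, combined with Proposition~\ref{prop:n-stick-upper}, would essentially settle the paper's convex $n$-gon conjecture as well. (You are also right that Theorem~\ref{thm:RibLowerBd} is of no help here: beyond yielding only $\Rib\ge1$, it assumes $\Wr(\uwf)=0$, which is not part of the hypothesis.)

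The genuine gap is the one you name---non-convex, possibly self-crossing diagrams---but it is worth recording exactly why your tools fail there, because the failure is structural rather than a matter of weak constants. (i) Your key inequality is not merely insufficient for non-convex diagrams, it is unavailable: if the two endpoints of an edge turn in opposite directions, the two fold lines slant over that edge on \emph{opposite} sides of the knot diagram, so they can pass one another without intersecting, and fold-line disjointness imposes no constraint on that edge at all; this is exactly the freedom exploited by the constructions of Theorem~\ref{thm:linking-n}. (ii) The compensating cost you propose to charge each sharp fold, $1/\sin\theta$ from Proposition~\ref{prop:extended-fold}, measures diagram length inside that fold's double-point region, and such regions need not be disjoint, so these costs cannot simply be added. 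This is not a hypothetical worry: in the paper's own extremal example---the equilateral triangle with $\Lk(\uwf)=\pm1$ at maximal width $w=1/\sqrt3$---each fold region extends $w/\bigl(2\sin\frac{\pi}{3}\bigr)=1/3$ along each adjacent edge, i.e.\ covers the \emph{entire} edge, so all three fold regions overlap completely. Any additive ``every cheap fold is paid for elsewhere'' accounting must therefore be replaced by something genuinely global. (iii) For non-convex diagrams, Proposition~\ref{prop:n-linking} no longer describes which folding data realize $\Lk(\uwf)=\pm1$: crossings contribute, and the constraint must be tracked through $\Lk(\uwf)=\Tw(\uwf)+2\Wr(\uwf)$ (Theorem~\ref{thm:link-twist-writhe}), not by counting fold signs. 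Your diagnosis that fusing these ingredients is the hard step agrees with the authors'---it is why the statement is a conjecture---but as submitted, the proposal establishes only the convex case and does not prove the statement.
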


%%%%%%%%%%%%%%%%%%%%%%%%%%%%%%%%%%%%%%%%%%%%%%%%%%%%%%%%%%%%%%%%%%%%%
\section{Ribbonlength for Folded Ribbon Convex $n$-Stick Unknots}\label{sect:n-stick}

In the previous section we looked at folded ribbon $3$-stick unknots. We completely determined the minimum folded ribbonlength for these unknots. What about other unknots made of more sticks? We will restrict our attention to a very particular type: folded ribbon unknots whose knot diagrams are non-degenerate, convex $n$-gons.  For example, previously in Figure~\ref{fig:pentagon}, we saw folded ribbon unknots whose knot diagrams are a regular pentagon and a regular octagon.  
In previous work  \cite{DKTZ}, we computed folded ribbon length for regular $n$-gons for $n\geq 4$. 

\begin{proposition}[\cite{DKTZ}] \label{prop:n-stick-upper} 
The minimum folded ribbonlength of any folded ribbon $n$-stick unknot $\uwf$ for $n\geq 4$ is bounded above by $\Rib([\um])\leq n \cot(\frac{\pi}{n})$. 
\end{proposition}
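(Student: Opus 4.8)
The plan is to prove this upper bound in the standard way described in Section~\ref{sect:ribbonlength}: exhibit one folded ribbon unknot of the required type and compute its folded ribbonlength. The natural candidate is the folded ribbon unknot whose knot diagram $\um$ is a \emph{regular} $n$-gon, taken with any choice of folding information (the ribbonlength depends only on the geometry of $\um$ and the width $w$, not on the over/under data at the folds). At each vertex the fold angle equals the interior angle $\alpha = \pi - \tfrac{2\pi}{n}$, and for $n \geq 4$ this satisfies $\tfrac{\pi}{2} \leq \alpha < \pi$, so every fold is a genuine right or obtuse fold of the kind analyzed in Proposition~\ref{prop:extended-fold} and Corollary~\ref{cor:triangles}. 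Since folded ribbonlength is scale-invariant, I would fix $w = 1$ and search for the \emph{shortest} regular $n$-gon that still admits a valid folded ribbon of width $1$; the length of that diagram then equals its ribbonlength and gives the desired upper bound.

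The computational heart is to measure how much of each edge is consumed by the folds. Exactly as in the derivation of Equations~\ref{eq:edge1}--\ref{eq:edge3} for the $3$-stick case, dropping a perpendicular of length $\tfrac{w}{2}$ from a fold line to its adjacent edge shows that each fold of angle $\alpha$ removes a segment of length $\tfrac{w}{2}\tan\tfrac{\alpha}{2}$ from each of its two neighboring edges. Hence an edge of length $s$ flanked by two such folds carries an exterior boundary segment of length $s - w\tan\tfrac{\alpha}{2}$, and the configuration is a valid immersion precisely when this quantity is nonnegative. Using $\alpha = \pi - \tfrac{2\pi}{n}$ we have $\tan\tfrac{\alpha}{2} = \tan\!\left(\tfrac{\pi}{2}-\tfrac{\pi}{n}\right) = \cot\tfrac{\pi}{n}$, so with $w=1$ the constraint reads $s \geq \cot\tfrac{\pi}{n}$. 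The shortest admissible regular $n$-gon therefore has edge length $s = \cot\tfrac{\pi}{n}$ and total length
\[
\Len(\um) = n\cot\tfrac{\pi}{n},
\]
whence $\Rib(\uwf) = \Len(\um)/w = n\cot\tfrac{\pi}{n}$, bounding $\Rib([\um])$ from above as claimed.

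The step that requires genuine care is verifying that the exterior-segment condition is the binding one, i.e.\ that no other part of the ribbon self-intersects before the folds meet as the width grows (equivalently, as the edges shrink). There are two competing restrictions: the exterior segments $s - w\tan\tfrac{\alpha}{2}$ must stay nonnegative, and the inner boundary (the inward offset of $\um$ by $\tfrac{w}{2}$) must not collapse past the incenter, which requires $w \leq 2\rin = s\cot\tfrac{\pi}{n}$. These give maximal widths $s\tan\tfrac{\pi}{n}$ and $s\cot\tfrac{\pi}{n}$ respectively, and since $\tan\tfrac{\pi}{n} \leq \cot\tfrac{\pi}{n}$ exactly when $n \geq 4$, the exterior constraint is the binding one throughout this range; this is precisely why the hypothesis $n \geq 4$ is needed. (For $n = 3$ the inequality reverses, the incenter constraint binds instead, and one recovers the value $3\sqrt{3}$ of Theorem~\ref{thm:3-stick-3} rather than $3\cot\tfrac{\pi}{3}$.) Confirming that at the extremal width the folds merely touch along the exterior boundary while the crossing information stays consistent — so that the configuration is a bona fide folded ribbon unknot, or a limit of such — then follows from the local fold geometry of Corollary~\ref{cor:triangles}, and the remaining trigonometric simplifications are elementary.
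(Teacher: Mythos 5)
Your proposal is correct and takes essentially the same approach as the paper: exhibit the regular $n$-gon and maximize the width until adjacent fold lines meet in the exterior region (the computation from \cite{DKTZ} recalled after the proposition, later formalized in Lemma~\ref{lem:fold-meet}), each fold consuming $\frac{w}{2}\tan(\frac{\alpha}{2})=\frac{w}{2}\cot(\frac{\pi}{n})$ from each adjacent edge, which yields $\Rib(\uwf)=n\cot(\frac{\pi}{n})$. One caveat, outside the statement's range and harmless to your argument: your parenthetical about $n=3$ is misleading, since the incenter constraint binds only for the all-same-folds information ($\Lk(\uwf)=\pm3$, Theorem~\ref{thm:3-stick-3}), whereas for $\Lk(\uwf)=\pm1$ the fold-meeting constraint still governs and gives exactly $3\cot(\frac{\pi}{3})=\sqrt{3}$ (Corollary~\ref{cor:3-stick-1}).
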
 

To see this we recall that a regular $n$-gon has folded ribbonlength $n\cot(\frac{\pi}{n})$ (\cite{DKTZ} Proposition~4.3). In our computation we observed that the ribbon width was maximized when the fold lines met on the exterior of the $n$-gon, as shown on the left in Figure~\ref{fig:obtuse-angles} for a regular pentagon.  In the next section, we show that this observation is true not just for regular $n$-gons, but for any $n$-gon with obtuse interior angles.

%%%%%%%%%%
\subsection{Unknots with obtuse interior angles}

In this section, we assume the unknot diagram $\um$ is a non-degenerate $n$-gon with $n\geq 4$ and with {\em obtuse interior angles}. As before, non-degenerate means that the side lengths are nonzero and the interior angles are not $\pi$. The assumption about interior angles means that {\em the $n$-gon is convex}!   In this special case, we fix the length of $\um$ and try and maximize the width of the corresponding folded ribbon unknot. The assumption on the angles of $\um$ mean that the fold angles of $\uwf$ are all obtuse.

As we recall from Section~\ref{sect:useful-geom}, this assumption has an effect on the geometry of the folds.  When two adjacent angles are obtuse, as on the right in Figure~\ref{fig:obtuse-angles},  we can increase the ribbon width until the fold lines meet. When the interior angles are acute, other factors can come into play, as discussed after Proposition~\ref{prop:3-stick-upper} in Section~\ref{sect:3-stick}.

\begin{figure}[htbp]
    \begin{overpic}{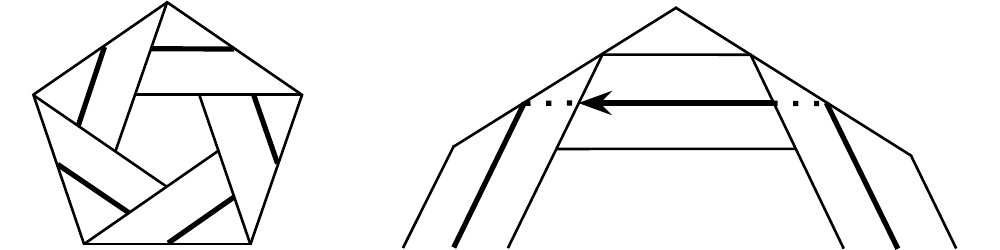}
   %  \put(90.75,10){$v_1'$}
     \put(83,15){$v_1$}
     %\put(75.5,19.5){$v_1''$}
     \put(79,12){$\alpha_1$}
     \put(67,17){$x_1$}
     \put(65, 25){$w_{1,2}$}
     %
   %  \put(41.25,10){$v_2''$}
     \put(48.75,15.5){$v_2$}
     %\put(56.5,20.5){$v_2'$}
     \put(52,12){$\alpha_2$}
     \put(48.5, 4){$x_2$}
     \put(89,4){$x_n$}
    \end{overpic}
    \caption{On the left, a folded ribbon unknot whose knot diagram is a regular pentagon. On the right, part of a folded ribbon $n$-stick unknot showing that the width can be increased until two folds meet at $w_{1,2}$.}
    \label{fig:obtuse-angles}
\end{figure}

\begin{lemma}  \label{lem:fold-meet}
Suppose $\um$ is a non-degenerate $n$-gon whose interior angles satisfy $\frac{\pi}{2}\leq \alpha_i<\pi$. Then the supremum of the width of the corresponding folded ribbon unknot $\uwf$ occurs when a pair of adjacent fold lines of $\uwf$ meet in the region exterior to $\um$.
\end{lemma}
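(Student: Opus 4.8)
The plan is to fix the convex polygon $\um$ and examine the one-parameter family of folded ribbons obtained by increasing the width $w$ from $0$, locating the first width at which the construction stops being a valid folded ribbon unknot corresponding to $\um$. Validity here means the fold lines stay pairwise disjoint and the ribbon stays immersed with no crossings beyond those of $\um$, of which there are none since a convex polygon has an embedded diagram. I will compare the width at which two adjacent fold lines first collide on the exterior with the width at which the interior part of the ribbon could first self-intersect, and use the obtuse hypothesis to show the former always comes first.

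First I would record the local geometry, exactly as in the computation behind Equations~\ref{eq:edge1}--\ref{eq:edge3}. The fold line at $v_i$ is the chord through $v_i$ perpendicular to the interior bisector; its exterior endpoint projects a distance $\frac{w}{2}\tan\frac{\alpha_i}{2}$ into each incident edge, while its interior endpoint projects the same distance beyond the vertex. Hence along $e_i=[v_i,v_{i+1}]$ of length $\ell_i$ the exterior boundary arc has length $x_i=\ell_i-\frac{w}{2}\bigl(\tan\frac{\alpha_i}{2}+\tan\frac{\alpha_{i+1}}{2}\bigr)$, which strictly decreases in $w$ and vanishes exactly when the fold lines at $v_i$ and $v_{i+1}$ meet; as these endpoints sit on the outer side of $e_i$, the meeting point is exterior to $\um$. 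This gives a width
$$w^{\mathrm{ext}}_i=\frac{2\ell_i}{\tan\frac{\alpha_i}{2}+\tan\frac{\alpha_{i+1}}{2}}$$
for each edge, and the candidate supremum is $w^{\ast}=\min_i w^{\mathrm{ext}}_i$.

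Next I would bound the interior obstruction from below. Because the interior arcs lengthen with $w$, the interior of the ribbon can only fail by arcs crossing deep inside $\um$, and by convexity the first such crossing cannot occur before the inward offset of $\um$ at distance $\frac{w}{2}$ degenerates, i.e.\ before some offset edge shrinks to zero. The offset vertex near $v_i$ lies a distance $\frac{w}{2}\cot\frac{\alpha_i}{2}$ along each edge, so that event occurs at width $w^{\mathrm{int}}_i=2\ell_i/(\cot\frac{\alpha_i}{2}+\cot\frac{\alpha_{i+1}}{2})$. The obtuse hypothesis $\frac{\pi}{2}\le\alpha_i<\pi$ gives $\frac{\alpha_i}{2}\in[\frac{\pi}{4},\frac{\pi}{2})$ and hence $\tan\frac{\alpha_i}{2}\ge 1\ge\cot\frac{\alpha_i}{2}$; comparing denominators edge by edge yields $w^{\mathrm{ext}}_i\le w^{\mathrm{int}}_i$, so $\min_i w^{\mathrm{ext}}_i\le\min_i w^{\mathrm{int}}_i$. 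Since the ribbon is valid for small $w$ by the existence result in \cite{Den-FRF}, validity persists for all $w<w^{\ast}$ and fails at $w^{\ast}$, where a pair of adjacent fold lines meets exterior to $\um$.

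The step I expect to be the main obstacle is the lower bound on the interior obstruction: rigorously ruling out an interior self-intersection of the ribbon boundary for $w<w^{\ast}$, since this is a global event and the interior arcs slightly overhang the offset vertices. I would handle the overhang and any non-adjacent crossings by convexity, reducing the essential comparison to the per-edge inequality $w^{\mathrm{ext}}_i\le w^{\mathrm{int}}_i$, which rests entirely on $\tan\frac{\alpha_i}{2}\ge\cot\frac{\alpha_i}{2}$ for obtuse $\alpha_i$. The equality case $\alpha_i=\frac{\pi}{2}$ is exactly the acute/obtuse transition, below which the interior incenter-type collision can precede the exterior one, matching the $3$-stick behavior noted after Proposition~\ref{prop:3-stick-upper}.
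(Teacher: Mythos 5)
Your exterior bookkeeping is exactly the paper's: the arc lengths $x_i=\ell_i-\frac{w}{2}\bigl(\tan\frac{\alpha_i}{2}+\tan\frac{\alpha_{i+1}}{2}\bigr)$ and the thresholds $w^{\mathrm{ext}}_i$ are the same quantities that appear in Equations~\ref{eq:edge1}--\ref{eq:edge3} and in the proof of Proposition~\ref{prop:equiangular}, and your reduction of the obtuseness hypothesis to $\tan\frac{\alpha_i}{2}\ge 1\ge \cot\frac{\alpha_i}{2}$, hence $w^{\mathrm{ext}}_i\le w^{\mathrm{int}}_i$ edge by edge, is correct; it is a sharper, quantitative version of what the paper argues qualitatively. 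The genuine gap is the one you flag yourself, and it is the entire content of the lemma rather than a loose end. The assertion that ``by convexity'' no interior failure can occur before the inward offset of $\um$ degenerates is not an argument: your $w^{\mathrm{int}}_i$ are per-edge quantities that control only the two folds at the ends of a single edge, whereas an interior collision is a global event, e.g.\ between pieces of ribbon belonging to non-adjacent edges of a long, thin, all-obtuse polygon. Nothing in the proposal compares $\min_i w^{\mathrm{ext}}_i$ with distances between non-adjacent edges, and since for right angles $w^{\mathrm{ext}}_i=w^{\mathrm{int}}_i=\ell_i$ exactly, there is no slack with which to absorb such events. (For edges separated by a single edge one can check $d(e_j,e_{j+2})=\ell_{j+1}\ge w^{\mathrm{ext}}_{j+1}$, because angles $\ge\frac{\pi}{2}$ splay the two edges apart; but that is a verification you would have to carry out for separating chains of every length.)

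There is also a conceptual mismatch in what ``interior failure'' means, and this is where your route differs from the paper's. In this model, overlapping layers of ribbon in the interior are not by themselves illegal: the folded ribbon need only be immersed, with disjoint fold lines and consistent crossing information, and Section~\ref{sect:linking-number} explicitly allows ribbon--ribbon intersections beyond those forced by the diagram. Concretely, for an equilateral triangle of side $\ell$ the supremum of the width with folding information $uuo$ is $\sqrt{3}\,\ell$ (Theorem~\ref{thm:equilateral-3-stick}) even though the inward offset degenerates already at $\ell/\sqrt{3}$; it is only the folding information $uuu$ that is capped at $\ell/\sqrt{3}$ (Theorem~\ref{thm:3-stick-3}). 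So offset degeneration is neither a failure in general nor the right obstruction: whether an interior overlap invalidates the ribbon depends on the folding information $F$, which your argument never engages, and the lemma is precisely the claim (cf.\ Remark~\ref{rmk:obtuse-only}) that for obtuse angles no $F$-dependent obstruction can bind first. The paper handles this structurally rather than numerically: when every fold angle is obtuse, each fold line lies on a supporting line of the convex diagram, the corners of the extended folds sit in the closed exterior region, and the interior side of the ribbon is exactly the inward offset of $\um$; hence the only defining condition that can fail as $w$ grows is disjointness of the fold lines, and it fails first for an adjacent pair, in the exterior. Your $\tan$/$\cot$ inequality is a clean quantitative complement to that argument (it is exactly why fold corners collide on the outer boundary before the fold apexes do on the inner one), but as written it does not replace it.
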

\begin{proof}
We refer to the notation in Figure~\ref{fig:AcuteVsObtuse1} and make two observations.  First, an extended fold consists of two parts: one double layer of ribbon in the fold $\triangle AFB$, and two single layers of ribbon in triangles $\triangle AFG$ and $\triangle BFJ$. Secondly,  when the fold angle $\theta$ is obtuse, the triangles $\triangle AFG$ and $\triangle BFJ$ share part of their boundary ($GF$ and $FJ$) with the ribbon's boundary on the same side as the fold angle. When the fold angle $\theta$ is acute, the triangles $\triangle AFG$ and $\triangle BFJ$ share part of their boundary  ($AG$ and $BJ$) with the ribbon's boundary on the opposite side of the fold angle. 

\begin{center}
\begin{figure}[htbp]
\begin{overpic}[scale=0.9]{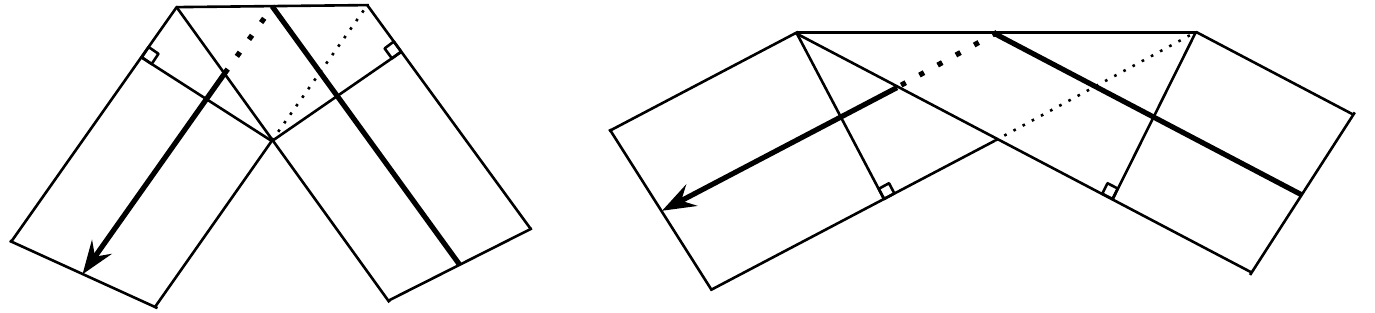}
\put(12,23){$A$}
\put(19,23){$C$}
\put(19.15, 19){$\theta$}
\put(26,23){$B$}
\put(17,16.5){$D$}
\put(11.5,14){$E$}
\put(18.5,9){$F$}
\put(8,18.5){$G$}
\put(20,16.5){$H$}
\put(23.5, 13){$I$}
\put(29.5, 19){$J$}
\put(57.5,21){$A$}
\put(72,21){$C$}
\put(71.5, 17.5){$\theta$}
\put(86,21){$B$}
\put(67.5,15.5){$D$}
\put(57.5,14){$E$}
\put(71.5,9.5){$F$}
\put(64,6.25){$G$}
\put(78,13.5){$H$}
\put(83.4,11){$I$}
\put(79, 6){$J$}
\end{overpic}
\caption{The extended fold from angle $\theta$ is the two pieces of ribbon determined by $EC+CI$. Figure re-used with permission from \cite{Den-FRC}.}
\label{fig:AcuteVsObtuse1}
\end{figure}
\end{center}

The unknot diagram $\um$ divides the plane into a bounded and unbounded region. The latter is the region exterior to $\um$. The assumption that the interior angles of $\um$ are obtuse and our observations in the previous paragraph allows us to deduce that
\begin{compactenum}
\item the fold angles are the interior angles of $\um$, 
\item the fold lines occur in the region exterior to $\um$,
\end{compactenum}
From this we conclude that the supremum of width occurs when a pair of adjacent fold lines meet.
\end{proof}

\begin{remark}\label{rmk:obtuse-only}
The fact the fold angles are obtuse is the only factor affecting the width of the ribbon. This means that the folding information and hence ribbon linking number (see  Proposition~\ref{prop:n-linking}) has no effect on the width. Thus any bounds on folded ribbonlength will apply to folded ribbons whose ribbon linking number is determined by Proposition~\ref{prop:n-linking}. We expand upon these ideas in Section~\ref{sect:ngon-summary}.
\end{remark}

Just as in Section~\ref{sect:3stick}, we can fix the length of the unknot diagram $\um$ and ask which angles and sidelengths give the maximum width. Lemma~\ref{lem:fold-meet} guarantees that the fold lines meet in the region exterior to $\um$. This allows us to use similar arguments as Theorem~\ref{thm:equilateral-3-stick} to first prove that the width is maximized when $\um$ is equiangular.

\begin{proposition}\label{prop:equiangular}
Suppose $\um$ is a non-degenerate $n$-gon for $n\geq 4$ whose  interior angles satisfy $\frac{\pi}{2}\leq \alpha_i<\pi$. Then the corresponding folded ribbon unknot $\uwf$ has ribbon width bounded from above by $w\leq \frac{1}{n\cot(\frac{\pi}{n})}$, and $w$ is maximized when $\um$ is equiangular.
\end{proposition}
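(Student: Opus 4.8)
The plan is to run the same optimization used in Theorem~\ref{thm:equilateral-3-stick}, now with $n$ fold angles and the $n$-gon angle-sum constraint, while exploiting the hypothesis $\frac{\pi}{2}\le\alpha_i<\pi$ to streamline the step that needed case analysis in the three-stick proof. First I would set up the exterior boundary lengths: label the edges $e_1,\dots,e_n$ of $\um$ with interior (fold) angles $\alpha_1,\dots,\alpha_n$, and let $x_i$ be the length of the portion of the ribbon boundary lying exterior to $\um$ along $e_i$ (the analogue of $x,y,z$ in Figure~\ref{fig:3-stick-XYZ}). By the identical right-triangle computation at each vertex (the relevant angle at $v_i$ is $\frac{\pi-\alpha_i}{2}$ and the half-width is $\frac{w}{2}$), each fold removes $\frac{w}{2}\tan\frac{\alpha_i}{2}$ from each of the two edges meeting at $v_i$, so that
$$x_i = |e_i| - \tfrac{w}{2}\tan\tfrac{\alpha_i}{2} - \tfrac{w}{2}\tan\tfrac{\alpha_{i+1}}{2}.$$
Lemma~\ref{lem:fold-meet} guarantees that, since the angles are obtuse, the width is constrained exactly by adjacent fold lines meeting in the exterior region, i.e. every admissible width satisfies $x_i\ge0$ for all $i$. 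Summing over $i$ (each vertex contributes $\tan\frac{\alpha_i}{2}$ to exactly two edges) and using $\Len(\um)=1$ gives $0\le\sum_i x_i = 1 - w\sum_{i=1}^n\tan\frac{\alpha_i}{2}$, hence $w\le\big(\sum_{i=1}^n\tan\frac{\alpha_i}{2}\big)^{-1}$.

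Next I would minimize $f(\alpha_1,\dots,\alpha_n)=\sum_{i=1}^n\tan\frac{\alpha_i}{2}$ subject to $g=\sum_i\alpha_i-(n-2)\pi=0$. As in the three-stick case, Lagrange multipliers give $\frac12\sec^2\frac{\alpha_i}{2}=\lambda$ for every $i$, so all the values $\cos^2\frac{\alpha_i}{2}$ agree. Here the hypothesis pays off: since $\frac{\pi}{2}\le\alpha_i<\pi$ forces $\frac{\alpha_i}{2}\in[\frac{\pi}{4},\frac{\pi}{2})$, each $\cos\frac{\alpha_i}{2}$ is positive and cosine is injective there, so $\cos^2\frac{\alpha_i}{2}=\cos^2\frac{\alpha_j}{2}$ immediately yields $\alpha_i=\alpha_j$; the sign cases that had to be ruled out in Theorem~\ref{thm:equilateral-3-stick} simply do not arise. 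With all angles equal and summing to $(n-2)\pi$, the unique critical point is $\alpha_i=\frac{(n-2)\pi}{n}=\pi-\frac{2\pi}{n}$ for all $i$, i.e. $\um$ is equiangular.

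To confirm this critical point is the minimum and to obtain the clean bound, I would use convexity rather than a nearby-point comparison. The function $h(t)=\tan\frac{t}{2}$ satisfies $h''(t)=\frac12\sec^2\frac{t}{2}\tan\frac{t}{2}>0$ on $[\frac{\pi}{2},\pi)$ (precisely where $\tan\frac{t}{2}>0$), so $h$ is convex on the obtuse range. Jensen's inequality then gives $\sum_i h(\alpha_i)\ge n\,h\!\big(\tfrac1n\sum_i\alpha_i\big)=n\tan\big(\tfrac{\pi}{2}-\tfrac{\pi}{n}\big)=n\cot\frac{\pi}{n}$, with equality exactly when all $\alpha_i$ coincide. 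Combining with the width bound yields $w\le\frac{1}{\sum_i\tan(\alpha_i/2)}\le\frac{1}{n\cot(\pi/n)}$, the angle-dependent bound being largest precisely in the equiangular case; and the regular $n$-gon realizes $w=\frac{1}{n\cot(\pi/n)}$ since its symmetry makes all $x_i$ vanish simultaneously. The main thing to get right is the justification, via Lemma~\ref{lem:fold-meet} and the obtuse hypothesis, that $x_i\ge0$ really is the only binding constraint, so that the summed inequality governs the width; once that is secured, the convexity step performs the optimization uniformly in $n$ and replaces the explicit case analysis of the three-stick argument.
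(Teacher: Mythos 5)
Your proposal is correct, and its geometric first half is exactly the paper's: the same per-vertex right-triangle computation giving $x_i=|e_i|-\frac{w}{2}\tan\frac{\alpha_i}{2}-\frac{w}{2}\tan\frac{\alpha_{i+1}}{2}$, the appeal to Lemma~\ref{lem:fold-meet} to justify that the $x_i\ge 0$ constraints govern the width, and the summed inequality $w\le\bigl(\sum_i\tan\frac{\alpha_i}{2}\bigr)^{-1}$. Where you genuinely diverge is in how the minimization of $f(\alpha_1,\dots,\alpha_n)=\sum_i\tan\frac{\alpha_i}{2}$ is completed. The paper (Appendix~\ref{append:equiangular}) runs Lagrange multipliers, obtains $\cos^2\frac{\alpha_i}{2}=\cos^2\frac{\alpha_j}{2}$, rules out the sign case $-\cos\frac{\alpha_1}{2}=\cos\frac{\alpha_2}{2}$ by an angle-sum contradiction, and then certifies that the unique critical point is a global minimum by showing $f\to\infty$ as some $\alpha_i\to\pi$ along the constraint set. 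You instead observe that the hypothesis $\frac{\pi}{2}\le\alpha_i<\pi$ puts $\frac{\alpha_i}{2}\in[\frac{\pi}{4},\frac{\pi}{2})$, where all cosines are positive, so the sign cases never arise; and, more substantively, you replace the second-order/boundary analysis with strict convexity of $t\mapsto\tan\frac{t}{2}$ on $[\frac{\pi}{2},\pi)$ and Jensen's inequality, which yields $\sum_i\tan\frac{\alpha_i}{2}\ge n\cot\frac{\pi}{n}$ with equality exactly in the equiangular case. This buys you global minimality and the equality characterization in one stroke, uniformly in $n$ (in fact it makes the Lagrange step redundant), at the cost of invoking convexity/Jensen rather than the elementary calculus the paper sticks to; note that your convexity argument works precisely because of the obtuse hypothesis, since $\tan\frac{t}{2}$ is not convex on all of $(0,\pi)$, so this shortcut would not have been available in the $3$-stick setting of Theorem~\ref{thm:equilateral-3-stick}. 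Your closing remark that the regular $n$-gon actually attains $w=\frac{1}{n\cot(\pi/n)}$ (all $x_i$ vanish simultaneously) is a worthwhile addition, since attainment is what the paper's phrase ``$w$ is maximized'' implicitly relies on via Proposition~\ref{prop:n-stick-upper}.
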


\begin{proof} Since the proof of this proposition is almost identical to the proof of Theorem~\ref{thm:equilateral-3-stick}, we have moved it to Appendix~\ref{append:equiangular}.
\end{proof}

\begin{figure}[htbp]
    \begin{overpic}{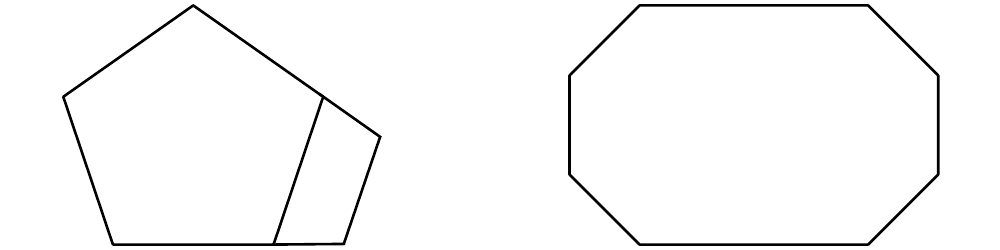}
       \put(18,1.5){$1$}
       \put(27,6.5){$2$}
       \put(37,4.5){$2'$}
       \put(24,17){$3$}
       \put(75,1.5){$1$}
       \put(88,4.5){$2$}
       \put(91.5,11){$3$}
       \put(88,18.5){$4$}
       \put(75,21){$5$}
       \put(60.5,18.5){$6$}
       \put(58,11){$7$}
       \put(60.5,4.5){$8$}
    \end{overpic}
    \caption{For all $n\geq 4$, equiangular $n$-gons are not necessarily equilateral.}
    \label{fig:polygons}
\end{figure}

We note that an equiangular $n$-gon is equilateral only when $n=3$. To see this, take any regular $n$-gon and consider two cases: when $n$ is odd or even. For $n\geq 5$ and odd, we take the first and third side of the $n$-gon and extend them by the same amount. This is shown on the left in Figure~\ref{fig:polygons} for a pentagon. Then, construct a new second side parallel to the original second side. The new $n$-gon is equiangular, but not equilateral. For $n\geq4$ and even, then the $n$-gon has two sides which are parallel to each other. If we extend these two sides by the same amount, then the resulting $n$-gon is equiangular, but not equilateral. This is shown for the $8$-gon on the right in Figure~\ref{fig:polygons}.

We now show that amongst all equiangular $n$-gons, the one which is equilateral has the smallest folded ribbonlength.

\begin{theorem}\label{thm:equilateral-n-stick}
 The minimum folded ribbonlength of any folded ribbon $n$-stick unknot with $n\geq 4$ and whose fold angles $\alpha_i$ satisfy $\frac{\pi}{2}\leq \alpha_i<\pi$ is bounded from below by $n\cot(\frac{\pi}{n})\leq \Rib([\uwf])$. The minimum occurs when $\um$ is a regular $n$-gon.
\end{theorem}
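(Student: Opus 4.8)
\textbf{The plan} is to combine the width bound from Proposition~\ref{prop:equiangular} with an optimization over equiangular $n$-gons of fixed perimeter. Since folded ribbonlength is scale-invariant, I would normalize $\Len(\um)=1$ throughout, so that $\Rib(\uwf)=\frac{1}{w}$ and minimizing folded ribbonlength is equivalent to maximizing the ribbon width $w$. Proposition~\ref{prop:equiangular} already shows that among all non-degenerate $n$-gons with interior angles in $[\frac{\pi}{2},\pi)$, the width is maximized when $\um$ is \emph{equiangular}, giving $w\leq \frac{1}{n\cot(\frac{\pi}{n})}$. What remains is to pin down which equiangular $n$-gon actually achieves the supremum of the width, and to confirm it is the regular (equilateral) one.

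First I would set up the width for a general equiangular $n$-gon. By Lemma~\ref{lem:fold-meet}, the supremum of the width occurs when a pair of adjacent fold lines meet in the exterior region. For an equiangular $n$-gon every interior angle equals $\frac{(n-2)\pi}{n}$, so every fold angle is the same; using Equations~\ref{eq:edge1}--\ref{eq:edge3} (generalized to $n$ sides via the extended-fold geometry of Proposition~\ref{prop:extended-fold} and Figure~\ref{fig:3-stick-XYZ}), the maximal width for a given side $e_i$ before its two adjacent folds collide is governed by the side length: the shortest side is the binding constraint. Concretely, on each edge the ribbon loses length $\frac{w}{2}\tan\frac{\pi-\alpha}{2}$ from each end, so a fold collision first occurs on the shortest edge $e_{\min}$, forcing
\begin{equation*}
w \leq \frac{|e_{\min}|}{\tan\left(\frac{\pi-\alpha}{2}\right)} = |e_{\min}|\tan\left(\frac{\pi}{n}\right),
\end{equation*}
using $\frac{\pi-\alpha}{2}=\frac{\pi}{n}$ for the equiangular case. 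Thus to maximize $w$ I want to maximize the shortest side length subject to $\sum_i |e_i|=1$, which clearly occurs when all sides are equal, i.e. $|e_i|=\frac{1}{n}$, yielding $w=\frac{1}{n}\tan\frac{\pi}{n}=\frac{1}{n\cot(\pi/n)}$ and hence $\Rib(\uwf)=n\cot\frac{\pi}{n}$.

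The key steps in order are: (1) reduce to equiangular $n$-gons via Proposition~\ref{prop:equiangular}; (2) express the maximal admissible width of an equiangular $n$-gon in terms of its \emph{shortest} side, using that the fold angle is constant and the fold-collision condition of Lemma~\ref{lem:fold-meet} binds first on the shortest edge; (3) maximize the shortest-side length under the fixed-perimeter constraint, which forces all sides equal and hence $\um$ regular; (4) evaluate to get the bound $n\cot(\frac{\pi}{n})\leq \Rib([\uwf])$, noting equality is realized by the regular $n$-gon (consistent with the upper bound in Proposition~\ref{prop:n-stick-upper}), so it is genuinely the minimum. I expect the main obstacle to be carefully justifying step~(2): I must verify that for an equiangular but non-equilateral $n$-gon it is indeed the shortest edge whose adjacent folds meet first, and that no other geometric obstruction (such as a fold line meeting a non-adjacent edge, or the ribbon self-intersecting elsewhere) intervenes before that collision. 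Handling this rigorously requires the observation in the proof of Lemma~\ref{lem:fold-meet} that, for obtuse fold angles, the fold lines live entirely in the exterior region, so adjacent-fold collisions on the shortest edge are the operative constraint and the width is a monotone function of that edge's length.
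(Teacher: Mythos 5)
Your proposal takes essentially the same route as the paper's proof: reduce to equiangular $n$-gons via Proposition~\ref{prop:equiangular}, observe that for an equiangular polygon the shortest edge is the binding constraint on the width (adjacent fold lines collide there first, using Lemma~\ref{lem:fold-meet}), and conclude that the regular $n$-gon is optimal. The only difference is packaging: the paper fixes the maximal width and replaces the equiangular polygon by the regular one whose side length equals the shortest edge (same width, strictly shorter perimeter, hence smaller ribbonlength), whereas you fix the perimeter at $1$ and maximize the shortest side; these are dual formulations of the same optimization.

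However, your key displayed formula is wrong as written, by a self-cancelling pair of reciprocal errors. By Equations~\ref{eq:edge1}--\ref{eq:edge3} (and the formula for $x_i$ in Appendix~\ref{append:equiangular}), the length consumed at each end of an edge is $\frac{w}{2}\tan\frac{\alpha}{2}$, not $\frac{w}{2}\tan\frac{\pi-\alpha}{2}$; in the equiangular case $\frac{\alpha}{2}=\frac{\pi}{2}-\frac{\pi}{n}$, so the per-end loss is $\frac{w}{2}\cot\frac{\pi}{n}$ and the collision constraint is
$$ w \le \frac{|e_{\min}|}{\cot(\frac{\pi}{n})} = |e_{\min}|\tan\big(\tfrac{\pi}{n}\big).$$
Your chain $w\le \frac{|e_{\min}|}{\tan((\pi-\alpha)/2)}=|e_{\min}|\tan(\frac{\pi}{n})$ is false for $n\ne 4$, since the middle expression equals $|e_{\min}|\cot(\frac{\pi}{n})$. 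This matters: if you carried your stated loss formula through consistently, you would obtain only $w\le |e_{\min}|\cot(\frac{\pi}{n})$, hence the weaker bound $n\tan(\frac{\pi}{n})\le \Rib([\uwf])$, which does not prove the theorem; you land on the correct bound only because a second reciprocal slip cancels the first. With the loss formula corrected, the remainder of your argument --- the minimum side of a perimeter-$1$ polygon is at most $\frac{1}{n}$, with equality exactly when all sides are equal, so the supremum of the width is attained only by the regular $n$-gon --- is sound and yields exactly the paper's conclusion.
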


\begin{figure}[htbp]
    \begin{overpic}{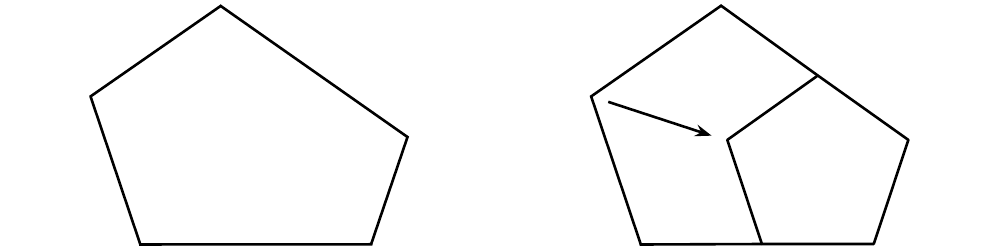}
    \put(38,0){$v_1$}
    \put(41.5,10){$v_2$}
    \put(21,25.5){$v_3$}
    \put(5,15){$v_4$}
    \put(10,0){$v_5$}
     \end{overpic}
    \caption{Equilateral $n$-gons have the shortest perimeter amongst all equiangular $n$-gons.}
    \label{fig:polygons-rescale}
\end{figure}

\begin{proof} 
From Proposition~\ref{prop:equiangular}, we know that the maximum width occurs when $\um$ is equiangular and each interior angle (fold angle) $\alpha_i=(\frac{n-2}{n})\pi$.  Suppose $\um$ is equiangular, but not equilateral. Also assume that the width is fixed at the maximum $w=\frac{1}{n\cot(\frac{\pi}{n})}$.   

Since $\um$ is not equilateral, then there is a shortest edge. Without loss of generality suppose the shortest edge is $v_1v_2$ (see Figure~\ref{fig:polygons-rescale} left). Referring to the notation in the proof of Proposition~\ref{prop:equiangular}, we know the fold lines at $v_1$ and $v_2$ meet, and the distance $x_1=0$. This edge determines the maximum ribbon width for $\uwf$.

Since $\um$ is not equilateral, there is some other edge which is longer than $v_1v_2$. We can replace $\um$ with a regular $n$-gon $\um'$ whose edges all have the same length as $v_1v_2$. This means that every pair of adjacent fold lines of $\mathcal{U}_{w,F}'$ meet, and we cannot decrease the length of $\um'$ any further. The width of the corresponding folded ribbon unknot $\uwf'$ is still $w=\frac{1}{n\cot(\frac{\pi}{n})}$.  Since the widths of $\uwf$ and $\uwf'$  are the same, but $\Len(\um')\leq \Len(\um)$, we have decreased the folded ribbonlength. Thus, the minimum folded ribbonlength occurs when $\um$ is a regular $n$-gon. If we then set the length of the equilateral $n$-gon to be $\Len(\um')=1$, we see that 
$$n\cot(\frac{\pi}{n})\le\frac{1}{w}=\frac{\Len(\um')}{w}=\Rib(\uwf').$$ 
\end{proof}

\begin{corollary}\label{cor:minngon}
The minimum folded ribbonlength for any folded ribbon $n$-stick unknot  $\uwf$ with $n\geq 4$ and whose fold angles satisfy $\frac{\pi}{2}\leq \alpha_i<\pi$ is $\Rib([\uwf])= n\cot(\frac{\pi}{n})$. This occurs when $\um$ is a regular $n$-gon.

\end{corollary}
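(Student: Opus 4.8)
The plan is to obtain Corollary~\ref{cor:minngon} by sandwiching the minimum folded ribbonlength between the upper bound of Proposition~\ref{prop:n-stick-upper} and the lower bound of Theorem~\ref{thm:equilateral-n-stick}, exactly as Corollary~\ref{cor:3-stick-1} was deduced from Proposition~\ref{prop:3-stick-upper} and Corollary~\ref{cor:3stickbound}. First I would record that the regular $n$-gon genuinely belongs to the class of unknot diagrams under consideration: its interior angles all equal $\left(\frac{n-2}{n}\right)\pi$, and for $n\geq 4$ this value satisfies $\frac{\pi}{2}\leq \left(\frac{n-2}{n}\right)\pi<\pi$. Hence the regular $n$-gon is a legitimate representative in the infimum defining $\Rib([\uwf])$, and it also falls under the fold-angle hypothesis $\frac{\pi}{2}\leq \alpha_i<\pi$ that governs Theorem~\ref{thm:equilateral-n-stick}.

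With that consistency check in place, the two inequalities combine immediately. Proposition~\ref{prop:n-stick-upper} produces the upper bound $\Rib([\uwf])\leq n\cot\!\left(\frac{\pi}{n}\right)$, realized by the regular $n$-gon, while Theorem~\ref{thm:equilateral-n-stick} supplies the matching lower bound $n\cot\!\left(\frac{\pi}{n}\right)\leq \Rib([\uwf])$ together with the assertion that the minimizer is a regular $n$-gon. Chaining these yields $\Rib([\uwf])=n\cot\!\left(\frac{\pi}{n}\right)$, attained precisely when $\um$ is a regular $n$-gon.

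The genuine content of the result therefore lives entirely in the two cited statements, so I do not expect any real obstacle at the level of the corollary itself. The one point that requires care---and the only place the argument could break---is ensuring the upper-bound example and the lower-bound class are compatible: the value $n\cot(\frac{\pi}{n})$ in Proposition~\ref{prop:n-stick-upper} is attained by the regular $n$-gon, and one must confirm (as above) that this same polygon satisfies the right/obtuse fold-angle restriction under which the lower bound of Theorem~\ref{thm:equilateral-n-stick} was established. Once this matching of hypotheses is verified, the equality is forced and no further computation is needed.
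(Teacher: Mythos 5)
Your proposal is correct and matches the paper's own proof, which likewise combines the upper bound of Proposition~\ref{prop:n-stick-upper} with the lower bound of Theorem~\ref{thm:equilateral-n-stick} to pin the minimum at $n\cot(\frac{\pi}{n})$, achieved by the regular $n$-gon. Your extra check that the regular $n$-gon's interior angles $\left(\frac{n-2}{n}\right)\pi$ satisfy the hypothesis $\frac{\pi}{2}\leq \alpha_i<\pi$ for $n\geq 4$ is a sensible verification the paper leaves implicit.
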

\begin{proof} Using Proposition~\ref{prop:n-stick-upper} and Theorem~\ref{thm:equilateral-n-stick} we see that amongst all such folded ribbon unknots, the minimum folded ribbonlength is bounded above and below by $n\cot(\frac{\pi}{n})$. These bounds occur when $\um$ is a regular $n$-gon. 
\end{proof}

The assumption that the fold angles are obtuse was key in proving  Proposition~\ref{prop:equiangular} and hence Corollary~\ref{cor:minngon}. We can loosen the requirement that the fold angles are obtuse and instead require that the folded ribbon $n$-stick unknot be convex. We conjecture that the minimal ribbonlength of any folded ribbon convex $n$-stick unknot is still  $\Rib([\uwf])=n\cot(\frac{\pi}{n})$.

\begin{conjecture}
The minimum folded ribbonlength of any folded ribbon convex $n$-stick unknot with $n\geq 4$ is $\Rib([\uwf]) = n\cot(\frac{\pi}{n})$. This occurs when $\um$ is a regular $n$-gon.
\end{conjecture}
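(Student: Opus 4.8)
The plan is to prove the conjecture in two halves. The upper bound $\Rib([\uwf]) \le n\cot(\tfrac{\pi}{n})$ is immediate: a regular $n$-gon has interior angles $\tfrac{(n-2)\pi}{n} \ge \tfrac{\pi}{2}$ for $n \ge 4$, so it falls under Corollary~\ref{cor:minngon} (equivalently Proposition~\ref{prop:n-stick-upper}) and realizes folded ribbonlength exactly $n\cot(\tfrac{\pi}{n})$. All the work is therefore in the matching lower bound $\Rib([\uwf]) \ge n\cot(\tfrac{\pi}{n})$ for an arbitrary convex $n$-gon diagram $\um$, where now I must allow acute fold angles. By scale invariance I normalize $\Len(\um) = 1$, so that $\Rib(\uwf) = 1/w$ and the lower bound becomes the width estimate $w \le 1/(n\cot(\tfrac{\pi}{n}))$.

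First I would extract a necessary condition on $w$ that holds for every convex diagram and every choice of folding information. At each vertex the fold line is the crease through that vertex perpendicular to the interior angle bisector; the same right-triangle computation used in Theorem~\ref{thm:equilateral-3-stick} shows that its endpoint on the ribbon boundary projects onto each adjacent edge at distance $\tfrac{w}{2}\tan(\tfrac{\alpha_i}{2})$ from the vertex, regardless of whether $\alpha_i$ is acute or obtuse. Since the fold lines of a valid folded ribbon knot must be disjoint, the two creases sharing an edge of length $\ell_i$ cannot overrun it, giving $\tfrac{w}{2}\tan(\tfrac{\alpha_i}{2}) + \tfrac{w}{2}\tan(\tfrac{\alpha_{i+1}}{2}) \le \ell_i$. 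Summing over the $n$ edges, each vertex contributes to two edges, so $w\sum_{i=1}^{n} \tan(\tfrac{\alpha_i}{2}) \le \Len(\um) = 1$. Next I apply convexity: $\tan(\tfrac{\alpha}{2})$ is convex on $(0,\pi)$, and $\sum_i \alpha_i = (n-2)\pi$, so Jensen's inequality gives $\sum_i \tan(\tfrac{\alpha_i}{2}) \ge n\tan(\tfrac{(n-2)\pi}{2n}) = n\cot(\tfrac{\pi}{n})$, with equality exactly when $\um$ is equiangular. Combining the two inequalities yields $\Rib(\uwf) = 1/w \ge \sum_i \tan(\tfrac{\alpha_i}{2}) \ge n\cot(\tfrac{\pi}{n})$; the Jensen equality analysis together with the fact that the regular $n$-gon minimizes perimeter among equiangular $n$-gons (as in Theorem~\ref{thm:equilateral-n-stick}) pins the minimizer to the regular $n$-gon.

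The step I expect to be the main obstacle is justifying that crease-disjointness is the operative necessary condition once acute fold angles are allowed. When $\alpha_i < \tfrac{\pi}{2}$ the local geometry of the fold changes (the extended-fold length switches from $\tan(\tfrac{\alpha_i}{2})$ to $\cot(\tfrac{\alpha_i}{2})$ in Proposition~\ref{prop:extended-fold}), and, as the $3$-stick example with $\Lk=\pm 3$ shows, the admissible width can be cut off earlier by an \emph{interior} self-intersection of the ribbon boundary --- the incenter obstruction of Theorem~\ref{thm:3-stick-3} --- rather than by the creases meeting outside $\um$. The delicate point is that such interior obstructions only force $w$ \emph{smaller}, hence $\Rib$ \emph{larger}, so they cannot violate the lower bound; what genuinely must be argued is that no convex diagram with any folding admits $w > 1/\sum_i\tan(\tfrac{\alpha_i}{2})$. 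This is what crease-disjointness secures, and the reason it is robust is that the position of each crease is fixed by the polygon's angle and the width alone, independently of the over/under information. Making this self-intersection bookkeeping rigorous for an arbitrary convex $n$-gon with several acute angles and arbitrary folding --- rather than for the obtuse-angled diagrams of Lemma~\ref{lem:fold-meet} --- is precisely the gap that keeps the statement a conjecture.
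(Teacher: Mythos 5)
First, note what you were up against: this statement is one of the paper's \emph{conjectures}. The paper proves it only under the extra hypothesis that every fold angle is obtuse (Lemma~\ref{lem:fold-meet}, Proposition~\ref{prop:equiangular}, Corollary~\ref{cor:minngon}) and explicitly leaves the general convex case open, so there is no paper proof to match --- the comparison is with that partial result. Your route differs from it exactly where it matters. The paper needs obtuseness because its method must \emph{characterize} the supremal width: Lemma~\ref{lem:fold-meet} shows that for obtuse angles the width is cut off precisely when adjacent fold lines meet, and that characterization genuinely fails for acute angles (the incenter obstruction of Theorem~\ref{thm:3-stick-3}). You observe that a lower bound on ribbonlength never requires knowing which obstruction binds: one valid necessary condition suffices, and disjointness of fold lines is such a condition. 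Moreover, that disjointness is not something you must extract by delicate ``self-intersection bookkeeping'' --- it is a standing assumption of the paper's model (Section~\ref{sect:model}, invoking Definition~2.7 of \cite{DKTZ}: the fold lines ``are assumed to be disjoint''), and the crease at $v_i$ is determined by $\alpha_i$ and $w$ alone, independently of the folding information $F$, exactly as you say. Your Jensen step is a clean substitute for the paper's Lagrange-multiplier computation (Appendix~\ref{append:equiangular}) and yields the same bound $\sum_i \tan(\frac{\alpha_i}{2}) \ge n\cot(\frac{\pi}{n})$ with equality iff $\um$ is equiangular; and since for $n\ge 4$ the equiangular angle $(\frac{n-2}{n})\pi$ is itself obtuse, tightness of your necessary condition is only needed in the regime where Proposition~\ref{prop:n-stick-upper} already supplies the matching upper bound.

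The one step you must actually write out --- and the only real gap in the proposal as it stands --- is the implication ``fold lines disjoint $\Rightarrow \frac{w}{2}\tan(\frac{\alpha_i}{2})+\frac{w}{2}\tan(\frac{\alpha_{i+1}}{2})\le \ell_i$.'' Overlapping projections of two segments onto a line do not by themselves force the segments to intersect, so ``cannot overrun'' needs the tilt of the creases: in coordinates with edge $e_i$ along the $x$-axis, vertex $v_i$ at the origin, and the polygon's interior above, the crease at $v_i$ is the segment $x=-y\tan(\frac{\alpha_i}{2})$, $|y|\le\frac{w}{2}$, and the crease at $v_{i+1}$ is $x=\ell_i+y\tan(\frac{\alpha_{i+1}}{2})$, $|y|\le\frac{w}{2}$. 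Their horizontal gap $\ell_i+y\bigl(\tan\frac{\alpha_i}{2}+\tan\frac{\alpha_{i+1}}{2}\bigr)$ is affine in $y$ and minimized on the exterior boundary $y=-\frac{w}{2}$, so disjointness is in fact \emph{equivalent} to your per-edge inequality, for acute and obtuse angles alike. With that short computation included, the argument is complete within the paper's model, and your closing paragraph is too pessimistic: what you call ``precisely the gap that keeps the statement a conjecture'' amounts to this routine verification plus an assumption the model already grants. The only residual quibble is one the paper itself shares: at the optimum the adjacent creases touch rather than being strictly disjoint, so the minimum is attained in the same limiting sense as in Corollary~\ref{cor:minngon}.
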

What about non-convex knots? We will show in Section~\ref{sect:unknots} that there are unknots with non-convex and  irregular knot diagrams whose corresponding folded ribbons have folded ribbonlength that is much smaller than that of Corollary~\ref{cor:minngon}.

%%%%%%%%%%%%%%%%%%%%%%%%j
%%%%%%%%%%%%%%%%%%%%%%%%j
\subsection{Unknots, ribbon equivalence, and folded ribbonlength}\label{sect:ngon-summary}
We know for a regular $n$-gon, the corresponding folded ribbon unknot $\uwf$ has folded ribbonlength $\Rib(\uwf)=n\cot(\frac{\pi}{n})$. We also saw that Proposition~\ref{prop:n-linking} gives us all possibilities for ribbon linking numbers of convex (and hence regular) $n$-gons. These two observations allow us to give upper bounds on folded ribbonlength for unknots in different ribbon equivalence classes. 

\begin{theorem}\label{thm:n-unknot1}
Suppose $\um$ is an unknot whose corresponding folded ribbon knot  $\uwf$ is a M\"obius band.
Then the minimum folded ribbonlength of any such unknot is bounded above by
\begin{compactenum} 
\item $\Rib([\uwf]) \leq \sqrt{3}$ when  $\Lk(\uwf)=\pm 1$,
\item $\Rib([\uwf]) \leq n\cot(\frac{\pi}{n})$ when $\Lk(\uwf)=\pm n$ and $n\geq 3$ is odd.
\end{compactenum}
Suppose $\um$ is an unknot whose corresponding folded ribbon knot $\uwf$ is a topological annulus. Then the minimum folded ribbonlength of any such unknot is bounded above by
\begin{compactenum}
\item $\Rib([\uwf])\leq 4$ when $\Lk(\uwf)=\pm 1$,
\item $\Rib([\uwf])\leq 2n\cot(\frac{\pi}{2n})$ when $\Lk(\uwf)=\pm n$ and $n\geq 2$.
\end{compactenum}
\end{theorem}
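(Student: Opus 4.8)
The plan is to prove each inequality by producing a single explicit folded ribbon unknot of the prescribed topological type and ribbon linking number, then reading off its folded ribbonlength; since $\Rib([\uwf])$ is an infimum over all representatives in the ribbon equivalence class, the ribbonlength of any one representative is automatically an upper bound. The two tools I would lean on are Proposition~\ref{prop:n-linking}, which records exactly which ribbon linking numbers a convex (hence regular) $m$-gon realizes and with which folding information, and the regular $m$-gon value $\Rib(\uwf)=m\cot(\frac{\pi}{m})$ from Proposition~\ref{prop:n-stick-upper}, which by Remark~\ref{rmk:obtuse-only} is independent of the folding information once the fold angles are obtuse. Parity does the topological bookkeeping: an odd number of sticks yields a M\"obius band, an even number an annulus.

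For the M\"obius bounds I would use regular $m$-gons with $m$ odd. When $\Lk(\uwf)=\pm1$, the equilateral triangle with folding information $uuo$ realizes ribbon linking number $\pm1$ by Remark~\ref{rmk:fold-info3}, and Corollary~\ref{cor:3-stick-1} gives $\Rib([\uwf])\le\sqrt3$. When $\Lk(\uwf)=\pm n$ with $n$ odd, I would take the regular $n$-gon with all folds of one type, which by Proposition~\ref{prop:n-linking} has ribbon linking number exactly $\pm n$ and folded ribbonlength $n\cot(\frac{\pi}{n})$. For the annulus bounds I switch to even $m$. When $\Lk(\uwf)=\pm1$, the square with folding information $uuuo$ has ribbon linking number $3\cdot(+\tfrac12)+(-\tfrac12)=+1$ and folded ribbonlength $4\cot(\frac{\pi}{4})=4$. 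When $\Lk(\uwf)=\pm n$ with $n\ge2$, the regular $2n$-gon with all folds of one type has, by Proposition~\ref{prop:n-linking}, ribbon linking number $\pm\frac{2n}{2}=\pm n$ and folded ribbonlength $2n\cot(\frac{\pi}{2n})$.

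The delicate step — and the one place the argument is not purely mechanical — is justifying the value $m\cot(\frac{\pi}{m})$ for these representatives and confirming it is insensitive to the folding information. Corollary~\ref{cor:minngon}, together with Remark~\ref{rmk:obtuse-only}, supplies both facts, but only under the hypothesis that every fold angle obeys $\frac{\pi}{2}\le\alpha_i<\pi$. The interior angle of a regular $m$-gon equals $\pi-\frac{2\pi}{m}$, which is at least $\frac{\pi}{2}$ exactly when $m\ge4$; hence the formula applies verbatim to the square, to every regular $2n$-gon with $n\ge2$, and to every regular $n$-gon with odd $n\ge5$, settling both annulus cases and the M\"obius case for odd $n\ge5$. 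The equilateral triangle is the genuine exception: its fold angles $\frac{\pi}{3}$ are acute, so the folding information now \emph{does} constrain the maximal width, and one must invoke the dedicated $3$-stick results rather than the regular-polygon formula. Here Corollary~\ref{cor:3-stick-1} delivers the $\Lk(\uwf)=\pm1$ M\"obius bound $\sqrt3$, whereas the $\Lk(\uwf)=\pm3$ M\"obius case is governed by Theorem~\ref{thm:3-stick-3}, whose value $3\sqrt3$ is what the construction actually yields in place of the naive $3\cot(\frac{\pi}{3})=\sqrt3$. I would therefore treat $n=3$ separately from the odd $n\ge5$ family in the M\"obius statement, flagging it as the point where the obtuse-angle hypothesis underlying Corollary~\ref{cor:minngon} breaks down.
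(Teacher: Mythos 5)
Your proposal follows the same route as the paper's own proof: for each case exhibit a regular polygon whose folding information is chosen via Proposition~\ref{prop:n-linking}, read off the folded ribbonlength $m\cot(\frac{\pi}{m})$, and let the parity of the number of sticks fix the topological type; your four witnesses (triangle with $uuo$, regular $n$-gon with all folds alike for odd $n\geq 5$, square with $uuuo$, regular $2n$-gon with all folds alike) are exactly the paper's. The one place you deviate --- treating $n=3$ separately in the M\"obius case (2) --- is not excessive caution but an actual correction to the paper. The paper's proof disposes of $n=3$ by asserting that $3\sqrt{3}=3\cot(\frac{\pi}{3})$, which is false: $\cot(\frac{\pi}{3})=\frac{1}{\sqrt{3}}$, so $3\cot(\frac{\pi}{3})=\sqrt{3}$, not $3\sqrt{3}$. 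Since the equilateral triangle with all folds of one type has folded ribbonlength $3\sqrt{3}$ (Theorem~\ref{thm:3-stick-3}), the regular-polygon construction proves only $\Rib([\uwf])\leq 3\sqrt{3}$ when $\Lk(\uwf)=\pm 3$, not the stated bound $3\cot(\frac{\pi}{3})=\sqrt{3}$; moreover the stated bound cannot stand, since it would contradict Conjecture~\ref{conj:3-stick-3}, and for writhe-zero ribbons Theorem~\ref{thm:RibLowerBd} already forces $3\leq\Rib$. Your repair --- quote $3\sqrt{3}$ at $n=3$ and reserve $n\cot(\frac{\pi}{n})$ for odd $n\geq 5$, where the obtuse-angle hypothesis of Corollary~\ref{cor:minngon} genuinely applies --- fixes both the statement and the proof, and your observation that the regular $m$-gon's interior angle $\pi-\frac{2\pi}{m}$ is obtuse exactly when $m\geq 4$ identifies precisely the boundary of validity of the regular-polygon formula.
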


\begin{proof} 
In each case, we will give an example where the unknot $\um$ is a regular $n$-gon and the corresponding folded ribbon unknot has the desired ribbon linking number. This example will give the upper bound on the minimal folded ribbonlength.  In each case, we use the fact that the folded ribbonlength is $n\cot(\frac{\pi}{n})$ when $\um$ is a regular $n$-gon. We also use the fact than when $n$ is odd the corresponding folded ribbon unknot is a M\"obius band, and when $n$ is even it is a topological annulus. 

First, assume  that $\uwf$ is a M\"obius strip. Recall that when $\um$ is an equilateral triangle
\begin{compactitem}
\item  $\Rib(\uwf)=\sqrt{3}$ when $\Lk(\uwf)=\pm 1$ (Corollary~\ref{cor:3-stick-1}), and 
\item $\Rib(\uwf)=3\sqrt{3}$ when $\Lk(\uwf)=\pm 3$ (Theorem~\ref{thm:3-stick-3}). 
\end{compactitem}
We also note that $3\sqrt{3}=3\cot(\frac{\pi}{3})$. Now, suppose $n\geq 5$ and odd, we choose $\um$ to be a regular $n$-gon.  Then $\Rib(\uwf)= n\cot(\frac{\pi}{n})$ and by Proposition~\ref{prop:n-linking}, we can choose the folding information so that $\Lk(\uwf)=\pm n$. 
 
 Second, assume that $\uwf$ is a topological annulus. We know that when the unknot has two edges,  the corresponding folded ribbon knot $\uwf$ has $\Lk(\uwf)= 0$. Thus we take $\um$ to be a square (a regular 4-gon) for $\Lk(\uwf)=\pm 1$. We use Proposition~\ref{prop:n-linking} to choose the folding information so that $\Lk(\uwf)=\pm 1$. Then $\Rib(\uwf)= 4\cot(\frac{\pi}{4})=4$. Now suppose that $n\geq 2$ and that $\um$ is a regular $2n$-gon. Again using Proposition~\ref{prop:n-linking}, we choose the folding information so that $\Lk(\uwf)=\pm n$. Then $\Rib(\uwf)= 2n\cot(\frac{\pi}{2n}).$
\end{proof}

%%%%%%%%%%%%%%%%%%%%%%%%%
%%%%%%%%%%%%%%%%%%%%%%%%%
\section{Unknots, topological annuli, and linking number}\label{sect:unknots}

In this section, we will work exclusively with folded ribbon unknots $\uwf$ that are topological annuli. In other words, their unknot diagrams $\um$ have an even number of edges. We have already seen that when the ribbon linking number $\Lk(\uwf)=0$, the minimum folded ribbonlength of such folded ribbon unknots is $\Rib([\uwf])=0$.  What about folded ribbon unknots with nonzero ribbon linking number?

We can use Lemma~\ref{lem:two-folds} to build a folded ribbon unknot with the desired ribbon linking number. We consider a piece of ribbon of $\uwf$ that corresponds to part of a knot diagram with no self-intersections and two vertices. Then Lemma~\ref{lem:two-folds} says that piece of ribbon has ribbon linking number $\pm 1$ if and only if it has two folds with the same sign (either $+1$ or $-1$). We will also use Corollary~\ref{cor:triangles}, which tells us that the minimum folded ribbonlength needed to create a fold is $1$ and occurs when the fold angle $\theta=\frac{\pi}{2}$. In this case, the fold consists of two identical right isosceles triangles whose equal sides have length $w$, the width of the folded ribbon knot.

\begin{proposition} \label{prop:linking-1} There is a folded ribbon unknot $\uwf$ which is a topological annulus, such that 
\begin{compactenum}
\item the knot diagram $\um$ has four sticks,
\item $\uwf$ has ribbon linking number $\Lk(\uwf)=\pm 1$, and 
\item $\uwf$ has folded ribbonlength $\Rib(\uwf)= 2$.
\end{compactenum}
\end{proposition}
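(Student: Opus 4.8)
The plan is to construct an explicit example and verify the three required properties, using the design principle already isolated in Lemma~\ref{lem:two-folds} and the optimal-fold geometry from Corollary~\ref{cor:triangles}. Since I need a topological annulus with $\Lk(\uwf)=\pm 1$, the knot diagram must have an even number of edges, and the smallest interesting such diagram has four edges. Lemma~\ref{lem:two-folds} tells me that to get ribbon linking number $\pm 1$ out of a two-vertex piece of ribbon I need two folds of the \emph{same} sign; Remark~\ref{rmk:link-compute} says that for an annulus each fold contributes $\pm\tfrac12$, so four edges giving two same-sign folds (contributing $+\tfrac12$ each) and two folds that cancel (contributing $+\tfrac12$ and $-\tfrac12$) will net $\Lk(\uwf)=\pm 1$. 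Equivalently, I want a four-stick unknot diagram whose folding information produces three folds of one type and one of the other (or some arrangement summing to $\pm 1$); the right picture is a long thin ``ribbon loop'' folded back on itself, analogous to the right-hand picture in Figure~\ref{fig:4unknot}, but with the fold angles chosen to be exactly $\tfrac{\pi}{2}$.

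First I would describe the diagram concretely: take $\um$ to be a degenerate-looking quadrilateral shaped like a long thin rectangle that has been pinched, with two short ends where the genuine folds live. The key is to arrange the four vertices so that exactly two of the fold angles equal $\tfrac{\pi}{2}$ (these are the two same-sign folds that carry the linking number) and the remaining two vertices contribute folds of opposite sign that cancel in the linking-number count but whose ribbonlength can be driven to zero. Concretely, I expect the shape to be essentially a thin annular strip with two $\tfrac{\pi}{2}$-folds at one pair of vertices and two near-$0$ (or $\pi$) folds at the other pair, so the net linking number is $\pm 1$ by Lemma~\ref{lem:two-folds}. I would then assign the folding information (a choice among over/under at each vertex) so that by Lemma~\ref{lem:fold-compute} the two essential folds carry the same sign.

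Next I would compute the folded ribbonlength. By Corollary~\ref{cor:triangles}, each of the two essential folds (with fold angle exactly $\tfrac{\pi}{2}$) requires folded ribbonlength $1$, since it consists of two right isosceles triangles with legs of length $w$; these two folds together account for ribbonlength $2$. The remaining two vertices, whose folds cancel in the linking-number count, can be taken with fold angle approaching $0$ or $\pi$ and with the connecting edges shrunk to length approaching $0$, so they contribute no additional ribbonlength in the infimum. Summing, $\Rib(\uwf)=2$, matching the claimed value and simultaneously meeting the lower bound $2n=2$ from Theorem~\ref{thm:RibLowerBd}, which is a useful sanity check that the construction is optimal.

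The main obstacle I anticipate is not the arithmetic but the geometric realizability: I must ensure the four-stick diagram is a genuine non-self-intersecting unknot diagram, that the corresponding folded ribbon is actually immersed with consistent crossing information and is a topological annulus (not a M\"obius band), and that the two ``cancelling'' folds can be made to contribute zero ribbonlength in the limit without creating spurious intersections between the ribbon boundary and the knot diagram of the type flagged in the discussion after Remark~\ref{rmk:link-compute}. I would address this by drawing the explicit figure, verifying the even edge count forces the annulus conclusion, checking that the two essential folds are placed so their fold lines are disjoint, and confirming via Lemma~\ref{lem:fold-compute} that the signed contributions sum to exactly $\pm 1$. The cleanest presentation is likely a single labeled figure together with a short verification of each of the three enumerated claims.
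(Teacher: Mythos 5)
Your overall skeleton matches the paper's: two fold-angle-$\frac{\pi}{2}$ folds of the same sign carry the linking number $\pm 1$ (Lemma~\ref{lem:two-folds}, Lemma~\ref{lem:fold-compute}), and Corollary~\ref{cor:triangles} prices them at ribbonlength $1$ each, for a total of $2$. The genuine gap is in how you handle the other two vertices. You propose giving them ``fold angle approaching $0$ or $\pi$'' with adjacent edges shrunk toward zero, so that they ``contribute no additional ribbonlength in the infimum.'' This fails for several reasons. First, by Proposition~\ref{prop:extended-fold} a fold from angle $\theta$ occupies ribbonlength $\frac{1}{\sin\theta}$, which \emph{diverges} as $\theta\to 0^{+}$ (and as $\theta\to\pi^{-}$): small positive fold angles are the most expensive, not free, so you cannot shrink those edges to zero while those folds exist at width $w$; the ribbonlength of your family would blow up rather than tend to $2$. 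Only a fold of angle \emph{exactly} $0$ (Remark~\ref{rmk:fold-angle}: two overlapping rectangles) imposes no length requirement, and such a fold contributes \emph{nothing} to the ribbon linking number, rather than the cancelling $\pm\frac12$ contributions you posit. Second, the proposition asserts a single folded ribbon unknot with $\Rib(\uwf)=2$ exactly; an infimum over a family does not produce one. Third, there is a closure obstruction you would hit upon drawing ``the explicit figure'': a closed $4$-gon with two interior angles $\frac{\pi}{2}$ and two interior angles $\epsilon>0$ has total turning that cannot equal $\pm 2\pi$, so your perturbed quadrilateral cannot close up as an embedded diagram; the configuration is forced to be degenerate, with $\epsilon=0$ and edges lying over one another in projection.

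The paper's proof builds exactly this degenerate configuration from the start: put two right overfolds (angle $\frac{\pi}{2}$) in an oriented strip, fold the strip in half along a line $\ell$ midway along the edge joining them (creating an angle-$0$ fold at a new vertex $C$), then join the two free ends (creating a second angle-$0$ fold at $A$), and shrink so each of the four edges of the diagram $A,v_1,C,v_2$ has length $\frac{w}{2}$. The two $\frac{\pi}{2}$-folds then contribute $+\frac12$ each, the angle-$0$ folds contribute nothing, and the two incidental crossings between the ribbon boundary near $C$ and the doubled-over diagram carry opposite signs $+1$ and $-1$ and cancel --- a verification the paper carries out explicitly and which your sketch only flags as a worry about ``spurious intersections'' without resolving. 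If you replace your limiting mechanism with exact angle-$0$ folds obtained by folding the strip in half and joining its ends, and then do that crossing check, your argument becomes the paper's.
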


\begin{figure}[htbp]
    \label{fig:linking-1-proof}
    \begin{overpic}{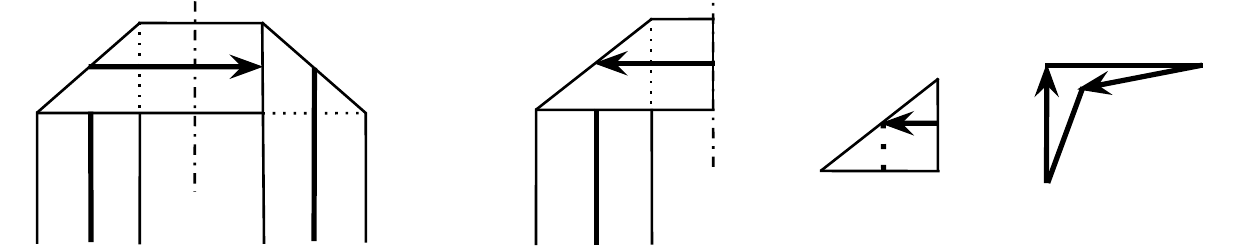}
    \put(4,8){$A$}
    \put(4.5,15){$v_1$}
    \put(16,20){$\ell$}
    \put(25,15){$v_2$}
    \put(25.5,8){$B$}  
    \put(41,15){$v_1$,$v_2$}   
    \put(57.5, 14){$C$}   
    \put(45,8.5){$A$}
    \put(48,8.5){$B$} 
     \put(57,20){$\ell$}
     \put(64,10.5){$v_1$,$v_2$}
     \put(75.5,9.5){$C$}
     \put(69,3.5){$A$}   
     \put(82,3){$A$}
     \put(80.5,14.5){$v_1$}
     \put(96.5,14){$C$}
     \put(87,10.5){$v_2$}
   
     \end{overpic}
    \caption{Construction showing there is a 4-stick unknot diagram whose corresponding folded ribbon unknot has ribbon linking number $+1$ and folded ribbonlength 2.}

\end{figure}

\begin{proof} We will construct such a folded ribbon unknot with $\Lk(\uwf)=+1$.  Take an oriented piece of ribbon and put two right overfolds in it, both with fold angle $\frac{\pi}{2}$, as shown on the left in Figure~\ref{fig:linking-1-proof}. In this figure, the vertices of the unknot diagram $\um$ are denoted by $v_1$ and $v_2$. The point $A$ is where $\um$ enters the first fold, and the point $B$ is where $\um$ leaves the second fold.  We start by folding the piece of ribbon halfway between edge $v_1v_2$ along line $\ell$ as shown second from the left in Figure~\ref{fig:linking-1-proof}. At this point vertex $v_2$ is directly over vertex $v_1$. There is now a fold of fold angle $0$ along $\ell$ and there is a new vertex of  $\um$ denoted by $C$. Next, we join $\um$ together at $A$ and $B$, and relabel this vertex $A$. This creates another fold in the ribbon $\uwf$ with fold angle~$0$.   We can shrink edges $v_1C$ and $Cv_2$ and move $C$ over to the boundary of the folds at vertices $v_1$ and $v_2$. The second from right image in Figure~\ref{fig:linking-1-proof} shows the folded ribbon unknot after this step.  

We have constructed a 4-stick unknot $\um$ with vertices $A, v_1, C, v_2$, and give $\um$ that orientation. An expanded view of $\um$ can be seen in the far right of Figure~\ref{fig:linking-1-proof}, where we have moved vertices $v_1$ and $v_2$ apart. The folded ribbon unknot $\uwf$ is thus a topological annulus. We next compute the ribbon linking number of $\uwf$. The fold at vertex $v_1$ is a right overfold, the fold at vertex $v_2$ is now a left underfold,  and by Remark~\ref{rmk:link-compute} these both contribute $+\frac{1}{2}$ to the ribbon linking number.  Now consider the boundary component of the ribbon parallel to the fold line at $C$. The boundary component in the direction from $A$ to $v_1$ goes under the knot diagram from $C$ to $v_2$ and this crossing has sign $-1$, the boundary component in the direction from $v_2$ to $A$ goes over the knot diagram from $v_1$ to $C$ and this crossing has sign $+1$.  By construction, $\uwf$ has ribbon linking number $\Lk(\uwf)=+1$. Since the folded ribbon unknot is just 2 folds with fold angle $\frac{\pi}{2}$ joined together, we use Corollary~\ref{cor:triangles} to see that the folded ribbonlength is $\Rib(\uwf)=2\times 1=2$. 

If we wish to construct a folded ribbon unknot with $\Lk(\uwf)=-1$, we start by putting two right underfolds in the oriented piece of ribbon, and then follow the rest of the steps.
\end{proof}

This basic construction can be generalized, as can be seen in our next result.

\begin{theorem} \label{thm:linking-n}
There is a folded ribbon unknot $\uwf$ which is a topological annulus, such that 
\begin{compactenum}
\item for any $n\in \N$, the knot diagram $\um$ has $(2n+2)$ sticks,
\item $\uwf$ has ribbon linking number $\Lk(\uwf)=\pm n$, and 
\item $\uwf$ has folded ribbonlength $\Rib(\uwf) = 2n$.
\end{compactenum}
\end{theorem}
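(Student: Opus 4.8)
The plan is to generalize the construction from Proposition~\ref{prop:linking-1} by stringing together $n$ copies of the basic two-fold gadget, each contributing $+1$ to the ribbon linking number, while keeping the whole object an embedded unknot diagram that is a topological annulus. First I would start with an oriented strip of ribbon and place $2n$ right overfolds in a row, each with fold angle $\frac{\pi}{2}$, analogous to the two overfolds at the start of the earlier proof. The key structural idea is that these $2n$ folds naturally group into $n$ adjacent pairs, and by Lemma~\ref{lem:two-folds} each pair (corresponding to a piece of knot diagram with two vertices and no self-intersection) contributes ribbon linking number $+1$ provided its two folds carry the same sign. Since every fold is a right overfold contributing $+\frac{1}{2}$ (Lemma~\ref{lem:fold-compute} and Remark~\ref{rmk:link-compute}), the total ribbon linking number will be $2n \cdot (+\frac{1}{2}) = +n$.

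Next I would carry out the closure and the vertex-count bookkeeping. Following the Proposition~\ref{prop:linking-1} template, I fold the ribbon at the midpoint along a line $\ell$ so that the two ends come into alignment, introducing fold-angle-$0$ folds (which by the remarks following Lemma~\ref{lem:fold-compute} contribute nothing to the linking number) and a new vertex where the strip doubles back. I then join the two free ends together and shrink the connecting edges so the auxiliary vertices slide onto the boundaries of the existing folds, exactly as in the earlier construction. The result should be an unknot diagram $\um$ with $2n$ ``essential'' vertices from the overfolds plus the $2$ vertices coming from the folding-at-$\ell$ and the end-joining, giving $(2n+2)$ sticks as claimed. Because the number of edges is even, $\uwf$ is a topological annulus, consistent with item (1).

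For the folded ribbonlength I would invoke Corollary~\ref{cor:triangles}: each fold with fold angle $\frac{\pi}{2}$ has folded ribbonlength exactly $1$, and the closure folds (fold angle $0$) and the short connecting edges contribute arbitrarily little length, so in the limit the total folded ribbonlength is $2n \times 1 = 2n$, matching item (3). Finally, the $\Lk(\uwf) = -n$ case follows by replacing every right overfold with a right underfold, which flips the sign of each fold's contribution, exactly as noted at the end of the proof of Proposition~\ref{prop:linking-1}.

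The main obstacle I anticipate is not the arithmetic but the embeddedness and consistency bookkeeping: I must verify that the $2n$ stacked folds can actually be realized as a single immersed annulus with consistent crossing information and no unwanted self-intersections of the ribbon, and that sliding the auxiliary vertices $C$-type vertices onto the fold boundaries does not create extra crossings that would alter the linking-number count. This is where a careful figure (as in Figure~\ref{fig:linking-1-proof}) and an explicit inductive description — attaching one two-fold gadget at a time and checking that each attachment preserves the annulus type, adds exactly $+1$ to the ribbon linking number via Lemma~\ref{lem:two-folds}, and adds folded ribbonlength approaching $2$ — will do most of the work. I would likely phrase the construction inductively on $n$, with Proposition~\ref{prop:linking-1} serving as the base case $n=1$.
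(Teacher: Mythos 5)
Your overall strategy --- string together fold angle $\frac{\pi}{2}$ folds all contributing the same sign, close up as in Proposition~\ref{prop:linking-1}, and count length via Corollary~\ref{cor:triangles} --- is indeed the paper's strategy, and your sign and ribbonlength arithmetic is correct. But there is a genuine geometric gap at your very first step: you cannot place ``$2n$ right overfolds in a row,'' each with fold angle $\frac{\pi}{2}$. A right fold of angle $\frac{\pi}{2}$ turns the knot diagram $90^{\circ}$ to the right, so four consecutive right folds turn the diagram through a full $360^{\circ}$. For $n\geq 2$ your polygonal path therefore either closes into a square and retraces itself (so the diagram is not simple, and after shrinking edges to fold-size the ribbon is a strip rolled around a square --- its ends cannot be joined without passing through the intermediate layers), or it must spiral; in the spiral case the edges cannot all be shrunk to length comparable to $w$ (killing the $\Rib=2n$ claim), and any edge joining the outer end back to the inner start crosses the intermediate arms, creating diagram crossings whose $\pm 1$ contributions (Remark~\ref{rmk:link-compute}) destroy the count $\Lk(\uwf)=\pm n$. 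In neither case does the closure you borrow from Proposition~\ref{prop:linking-1} --- one fold along $\ell$, join the ends, slide the auxiliary vertices onto the folds --- go through.

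The paper's construction avoids this by alternating the turning direction in pairs: two right overfolds, then two left underfolds, then two right overfolds, and so on, which produces a square-wave (zigzag) path rather than a spiral. The point your proposal misses is that Lemma~\ref{lem:fold-compute} assigns sign $+1$ both to right overfolds \emph{and} to left underfolds, so one can alternate the geometric turning direction (as the zigzag forces) while keeping every fold's contribution equal to $+\frac{1}{2}$. Even then the closure requires a case split that your uniform treatment hides: after $2n$ quarter-turns the two free ends of the strip are antiparallel when $n$ is odd but parallel when $n$ is even, so the paper uses one midpoint fold (through the new vertex $C$) for odd $n$ and two folds (along $\ell_1$ and $\ell_2$, creating vertices $C$ and $D$) for even $n$. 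Finally, the fold-angle-$0$ closure folds are not quite ``free'': they create crossings between the knot diagram and the ribbon boundary near $C$ (and the rejoined vertex), and one must check, as Proposition~\ref{prop:linking-1} does explicitly, that these contributions cancel rather than merely citing the remark that angle-$0$ folds contribute nothing. Your inductive phrasing could be repaired, but only after replacing ``all right overfolds'' with the alternating pattern and handling the parity of $n$ in the closure.
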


\begin{proof}
We will construct the desired folded ribbon unknot with width $w$ and  $\Lk(\uwf)=+n$. We proved the $n=1$ case in Proposition~\ref{prop:linking-1} above.  We divide the proof into two cases: $n$ odd and $n$ even. 

{\bf Case~1:} Assume that $n$ is odd. We start by taking an oriented piece of ribbon, and then add two right overfolds, then two left underfolds, then two right overfolds, and continue in this pattern ending with two right overfolds. There are $n$ pairs of under/overfolds altogether. All the folds have fold angle $\frac{\pi}{2}$, as shown in Figure~\ref{fig:linking-odd}. The case where $n=3$ is illustrated on the left, and the general case for $n$ odd is illustrated on the right.

\begin{figure}[htbp]
     \begin{overpic}[scale=0.65]{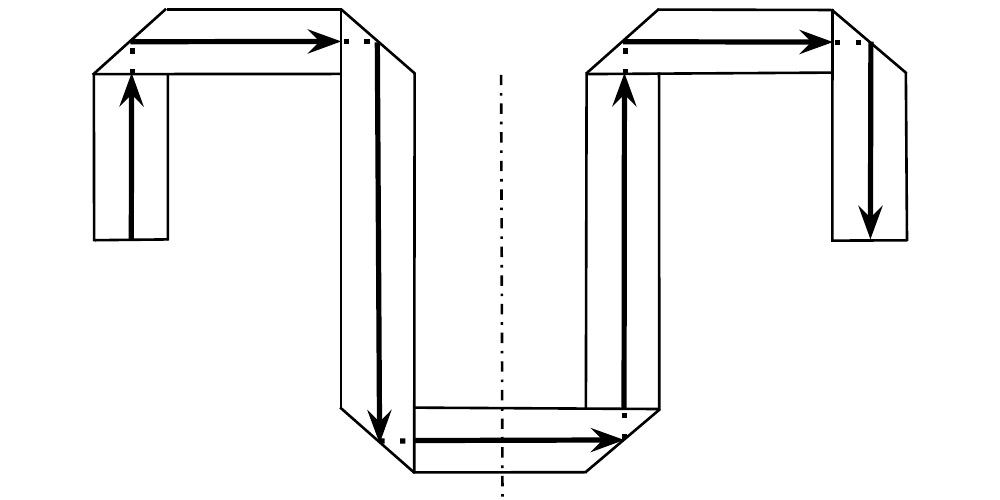}
    \put(10,21){$A$}
    \put(7,46){$v_1$}
    \put(38,46){$v_2$}
    \put(33,3){$v_3$} 
    \put(63,3){$v_4$} 
    \put(46,25){$\ell$} 
    \put(56,46){$v_5$} 
    \put(87,46){$v_6$} 
    \put(85,21){$B$}  
    \put(48, -3){$C$}
     \end{overpic}
      \begin{overpic}[scale=0.65]{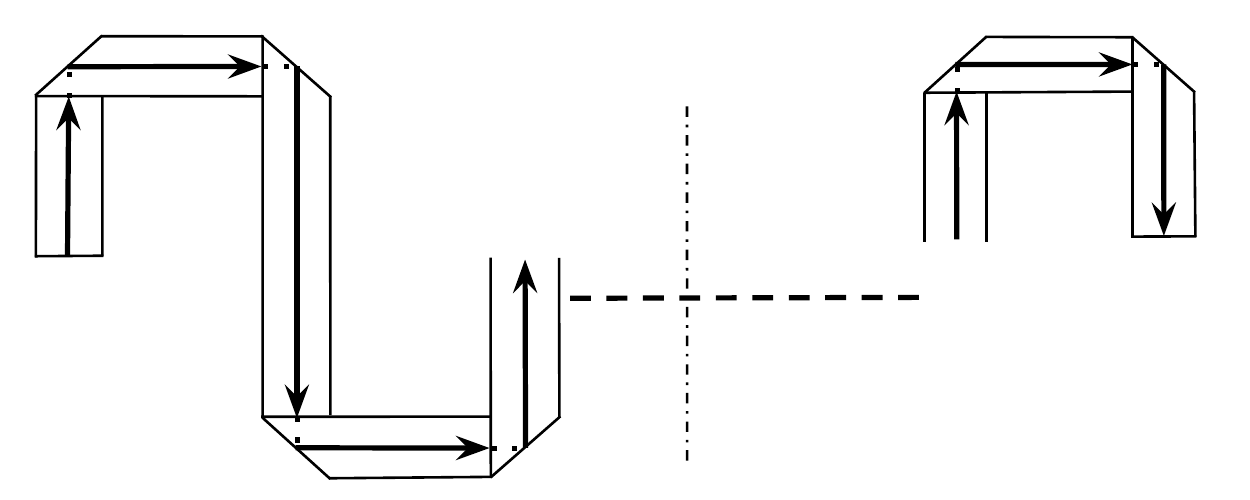}
    \put(3,15.5){$A$}
    \put(1,35.5){$v_1$}
    \put(23.5,35){$v_2$}
    \put(20,2){$v_3$} 
    \put(42.5,2){$v_4$} 
    \put(52,25){$\ell$} 
    \put(65.5,35.5){$v_{2n-1}$} 
    \put(93.5,35.5){$v_{2n}$}  
    \put(92,14.5){$B$}  
    \put(53, 0){$C$}
     \end{overpic}
    \caption{Constructing a folded ribbon unknot which is a topological annulus and whose ribbon linking number is $+n$, where $n$ is odd. On the left, the case $n=3$, and on the right, the general case.}
    \label{fig:linking-odd}
\end{figure}

In Figure~\ref{fig:linking-odd}, we see the vertices of the knot diagram $\um$ are labeled $v_1$ through $v_{2n}$. In addition, we assume that the following horizontal and vertical line segments have equal length: 
\begin{align*}
|v_1v_2| & =|v_3v_4| = \dots = |v_{2n-3}v_{2n-2}|=|v_{2n-1}v_{2n}|, \\
|v_2v_3| & = |v_4v_5|=\dots = |v_{2n-4}v_{2n-3}|= |v_{2n-2}v_{2n-1}|, \\
|Av_1| &= |v_{2n}B|.
\end{align*}
  We fold the ribbon in half along the line $\ell$, which intersects the midpoint $C$ of segment $v_nv_{n+1}$, as shown in Figure~\ref{fig:linking-odd}.  After the fold, point $B$ is over point $A$, vertex $v_{2n}$ is over $v_1$, vertex $v_{2n-1}$ is over $v_2$, vertex $v_{2n-2}$ is over $v_3$, etc.  There is now a fold with fold angle $0$ along $\ell$ at vertex $C$.
Next, we join $\um$ together at points $A$ and $B$, and relabel this vertex $A$. This creates another fold with fold angle $0$. We also shrink the piece of ribbon between each pair of folds to have zero length. Since the ribbon has width $w$, after shrinking we see that the edges in $\um$ have length 
\begin{align*} 
\frac{w}{2} &= |Av_1|=|v_nC|=|Cv_{n+1}|=|v_{2n}A|,  \\
w &=|v_1v_2|=|v_2v_3|= \dots =|v_{n-1}v_n| = |v_{n+1}v_{n+2}| =\dots =|v_{2n-1}v_{2n}|. 
\end{align*}

Altogether we have constructed a ($2n+2$)-stick unknot diagram with vertices $$A, v_1, v_2, \dots, v_n, C,v_{n+1}, \dots v_{2n},$$ and give $\um$ that orientation.  Since $\um$ has an even number of edges, the corresponding folded ribbon unknot $\uwf$ is a topological annulus. Since the folds are either right overfolds or left underfolds, we use the same argument as in Proposition~\ref{prop:linking-1} to see that $\uwf$ has ribbon linking number $\Lk(\uwf)=+n$.  Since the folded ribbon unknot $\uwf$ is just $2n$ folds with fold angle $\frac{\pi}{2}$ joined together, we use Corollary~\ref{cor:triangles} to see that the folded ribbonlength is $\Rib(\uwf)=2n\times 1=2n$.

{\bf Case~2:} Assume $n$ is even.  For $n=2$, we take an oriented piece of ribbon, and then add two right overfolds, followed by two left underfolds.  For general $n$ even, we take an oriented piece of ribbon, and then add two right overfolds, then two left underfolds, then two right overfolds, and continue in this pattern ending with two left underfolds. Altogether there are $n$ pairs of over/underfolds. All the folds have fold angle $\frac{\pi}{2}$, as shown on the left in Figure~\ref{fig:linking-2} and in Figure~\ref{fig:linking-even}. In these figures we see the vertices of the knot diagram $\um$ are labeled $v_1$ through $v_{2n}$. In addition, we assume that the following horizontal and vertical line segments have equal length: $|v_1v_2|=|v_3v_4| = \dots =|v_{2n-1}v_{2n}|$, and $|v_2v_3| = |v_4v_5|=\dots = |v_{2n-2}v_{2n-1}|$ and $|Av_1| = |v_{2n}B|$.  

\begin{figure}[htbp]
    \begin{overpic}[scale=0.65]{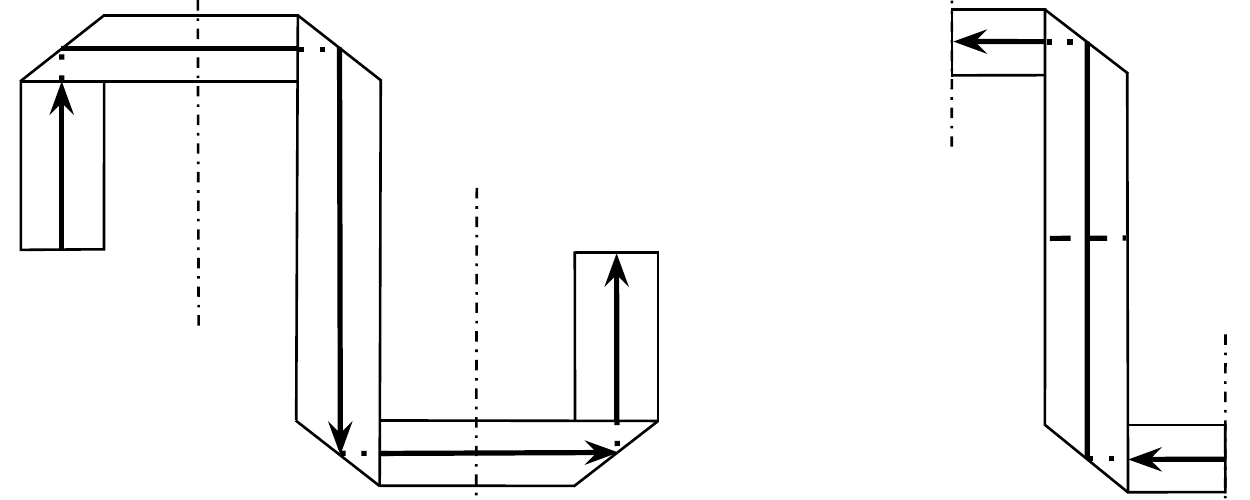}
    \put(2.5,16){$A$}
    \put(0,36.5){$v_1$}
    \put(14,23.5){$\ell_1$}
    \put(14, 40){$C$}
    \put(27.5,36.5){$v_2$}
    \put(24,1){$v_3$} 
    \put(49.5,1){$v_4$} 
    \put(36,18){$\ell_2$}
    \put(36,7){$D$} 
    \put(47.5,20.5){$B$}  
    \put(87,37){$v_1$,$v_2$}
    \put(75,26){$\ell_1$}
    \put(72,35){$C$}
    \put(79,20){$A$}
    \put(77,1){$v_3$,$v_4$}
    \put(97,13){$\ell_2$}
    \put(98.5,2){$D$}
     \end{overpic}
       \caption{Constructing a folded ribbon unknot which is a topological annulus and whose ribbon linking number is $+2$.}
    \label{fig:linking-2}
\end{figure}

Throughout the following construction we will refer to Figures~\ref{fig:linking-2} and~\ref{fig:linking-even}. We start by folding the piece of ribbon corresponding to edge $v_1v_2$ in half along the line $\ell_1$. After the fold, vertex $v_1$ is over $v_2$, and there is now a fold with fold angle $0$ along $\ell_1$ at a new vertex of $\um$ denoted by $C$. The next step is to fold the rest of the ribbon (from edge $v_2v_3$ to edge $v_{2n}B$) in half. In the $n=2$ case, this means folding edge $v_3v_4$ in half along line $\ell_2$. In the general case, this means folding edge $v_{n+1}v_{n+2}$ along the line $\ell_2$.  There is now a fold with fold angle $0$ along $\ell_2$ at a new vertex of $\um$ denoted by $D$. 

\begin{figure}[htbp]
      \begin{overpic}[scale=0.65]{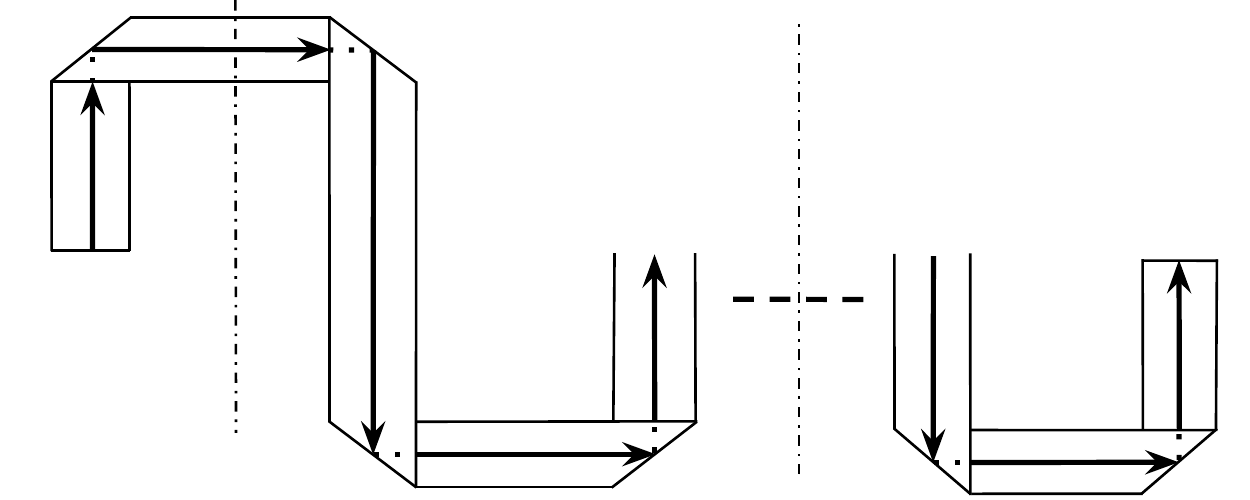}
 \put(5,16){$A$}
    \put(3,36.5){$v_1$}
    \put(17,23.5){$\ell_1$}
    \put(17,40){$C$}
    \put(30,36.5){$v_2$}
    \put(27,1){$v_3$} 
    \put(53,1){$v_4$} 
    \put(62,23.5){$\ell_2$} 
    \put(62, 38){$D$}
    \put(92.5,20){$B$}  
    \put(65,1){$v_{2n-1}$}
    \put(95,1){$v_{2n}$}
     \end{overpic}
       \caption{Constructing a folded ribbon unknot which is a topological annulus and whose ribbon linking number is $+n$, for $n$ even.}
    \label{fig:linking-even}
\end{figure}

At this stage, point $B$ is over point $A$, vertex $v_{2n}$ is over $v_3$, vertex $v_{2n-1}$ is over $v_4$, etc.  Next, we join $\um$ together at points $A$ and $B$, and relabel this point $A$. Since $v_1Av_{2n}$ is a straight line, we replace the two edges $v_1A$ and $Av_{2n}$ with one edge $v_1v_{2n}$.  This has been illustrated for the $n=2$ case on the right in Figure~\ref{fig:linking-2}.
We again shrink the piece of ribbon between each pair of folds to have zero length. Since the ribbon has width $w$, after shrinking we see that the edges in $\um$ have length \begin{align*}
\frac{w}{2} &= |v_1C| = |Cv_2| =|v_{n+1}D|=|Dv_{n+2}|, \\ 
w&=|v_2v_3|=|v_3v_4|= \dots =|v_nv_{n+1}| = |v_{n+2}v_{n+3}| =\dots =|v_{2n-1}v_{2n}|=|v_{2n}v_1|.
\end{align*}

 Altogether we have constructed a ($2n+2$)-stick unknot diagram $\um$ with vertices $$v_1, C, v_2, \dots, v_{n+1}, D,v_{n+2}, \dots v_{2n},$$ and give $\um$ that orientation. Since $\um$ has an even number of edges, the corresponding folded ribbon unknot $\um$ is a topological annulus.  The argument now follows in the exact same way to the $n$ odd case. Since the folds are either right overfolds or left underfolds, then $\uwf$ has ribbon linking number $\Lk(\uwf)=+n$. Since the folded ribbon unknot $\uwf$ is just $2n$ folds with fold angle $\frac{\pi}{2}$ joined together, we use Corollary~\ref{cor:triangles} to find the folded ribbonlength is $\Rib(\uwf)=2n\times 1=2n$. 

If we wish to construct a folded ribbon unknot with $\Lk(\uwf)=-n$, we follow the same steps in Cases 1 and 2, but switch all overfolds to underfolds and vice versa.
\end{proof}

\begin{corollary}  \label{cor:n-unknot2}
For any  $n\in \N$, the minimum folded ribbonlength of any folded ribbon unknot $\uwf$ which is a topological annulus with ribbon linking number $\Lk(\uwf)=\pm n$ is bounded above by $\Rib([\uwf])\leq 2n$.
\end{corollary}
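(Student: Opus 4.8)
The plan is to derive this corollary directly from Theorem~\ref{thm:linking-n}, which has just been proved; no new construction or estimate is required. Recall that the minimum folded ribbonlength $\Rib([\uwf])$ is defined as the infimum of $\Rib(\uwf)$ taken over all folded ribbon unknots $\uwf$ that lie in the prescribed topological type (a topological annulus) and have the prescribed ribbon linking number ($\pm n$). An infimum over a nonempty set is at most the value attained at any single member of that set, so to bound $\Rib([\uwf])$ from above it suffices to exhibit one folded ribbon unknot with the required properties whose folded ribbonlength equals $2n$.

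First I would invoke Theorem~\ref{thm:linking-n} to supply exactly such a witness: for each $n\in\N$ it guarantees a folded ribbon unknot $\uwf$ which is a topological annulus, has ribbon linking number $\Lk(\uwf)=\pm n$, and has folded ribbonlength $\Rib(\uwf)=2n$. The first two properties certify that this $\uwf$ belongs to the set over which the infimum defining $\Rib([\uwf])$ is taken, while the third gives the numerical value of its folded ribbonlength. Combining these, $\Rib([\uwf]) \leq \Rib(\uwf) = 2n$, which is the claimed bound.

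There is no real obstacle here: the entire content of the corollary has already been carried out in the proof of Theorem~\ref{thm:linking-n}, and the corollary merely repackages the existence of that explicit example as an upper bound on the infimum. The only point warranting a word of care is that both signs $\pm n$ must be realized; this is handled at the end of Theorem~\ref{thm:linking-n} by switching all overfolds to underfolds (and vice versa), which produces a folded ribbon unknot of the same topological type and the same folded ribbonlength $2n$ but with the opposite ribbon linking number. Hence the upper bound $\Rib([\uwf])\leq 2n$ holds uniformly for ribbon linking number $\pm n$, completing the argument.
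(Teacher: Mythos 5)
Your proposal is correct and matches the paper's (implicit) reasoning exactly: the paper states Corollary~\ref{cor:n-unknot2} without a separate proof precisely because the construction in Theorem~\ref{thm:linking-n} supplies the witness whose folded ribbonlength $2n$ bounds the infimum, with both signs $\pm n$ handled by swapping overfolds and underfolds. Nothing is missing from your argument.
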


Recall that in Theorem~\ref{thm:n-unknot1}, we gave different upper bounds on the minimum folded ribbonlength for folded ribbon unknots of both topological types which depended on the ribbon linking number.  It turns out that when $\uwf$ is a topological annulus, the best upper bound is given by Corollary~\ref{cor:n-unknot2}.  To see this, we simply compare values.
\begin{compactitem}
\item When $\Lk(\uwf)=\pm 1$,  we have $\Rib([\uwf])\leq 2<4$. 
\item When $\Lk(\uwf)=\pm n$ for all other $n\in \N$, we have $\Rib([\uwf])\leq 2n\leq 2n\cot(\frac{\pi}{2n})$. (Note there is equality if and only if $n=2$.)
\end{compactitem}

The construction in Theorem~\ref{thm:linking-n} also allows us to prove the following theorem.

\begin{theorem} \label{thm:writhe0}
The minimum folded ribbonlength of any folded ribbon unknot $\uwf$ which is a topological annulus with ribbon linking number $\Lk(\uwf)=\pm n$ and writhe $\Wr(\uwf) =0$ is
$\Rib([\uwf])=2n$.
\end{theorem}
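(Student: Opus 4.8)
The plan is to prove the equality by sandwiching $\Rib([\uwf])$ between two matching bounds, both of which are almost immediate consequences of results already in hand. For the lower bound I would invoke Theorem~\ref{thm:RibLowerBd} directly: its hypotheses are precisely that $\uwf$ is a topological annulus with $\Wr(\uwf)=0$ and $\Lk(\uwf)=\pm n$, and its conclusion is $2n\le\Rib([\uwf])$. Since these are exactly the constraints defining the family over which we minimize, this direction requires no additional argument — the proof of Theorem~\ref{thm:RibLowerBd} already shows that any such annulus must contain $2n$ folds of the same sign, each of folded ribbonlength at least $1$ by Corollary~\ref{cor:triangles}.

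For the upper bound I would exhibit a single representative in the family that attains folded ribbonlength exactly $2n$, namely the folded ribbon unknot $\uwf$ built in Theorem~\ref{thm:linking-n}. That construction already certifies three of the four needed properties: it is a topological annulus, it has $\Lk(\uwf)=\pm n$, and it has $\Rib(\uwf)=2n$. The only remaining point is to check that this same example also satisfies $\Wr(\uwf)=0$, so that it genuinely lies in the family being minimized; granting this, $\Rib([\uwf])\le 2n$, and combining with the lower bound yields $\Rib([\uwf])=2n$, attained.

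The crux, and the only genuinely new step, is verifying $\Wr(\uwf)=0$ for the construction. I would obtain this from the C\u{a}lug\u{a}reanu--White relation for annuli, Theorem~\ref{thm:link-twist-writhe}(1), which states $\Lk(\uwf)=\Tw(\uwf)+\Wr(\uwf)$. We already know $\Lk(\uwf)=\pm n$, so it suffices to show $\Tw(\uwf)=\pm n$. By definition the twist is one-half the signed count of crossings between the knot diagram and a single boundary component, taken only at the folds. In the construction every contributing fold has fold angle $\tfrac{\pi}{2}$ and is either a right overfold or a left underfold, so by Lemma~\ref{lem:fold-compute} and Remark~\ref{rmk:link-compute} each of the $2n$ such folds contributes $+\tfrac12$ (respectively $-\tfrac12$) to the linking number, while the auxiliary folds of fold angle $0$ contribute nothing since there the ribbon boundary misses the knot diagram. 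Hence $\Tw(\uwf)=\pm n$, and Theorem~\ref{thm:link-twist-writhe}(1) forces $\Wr(\uwf)=\Lk(\uwf)-\Tw(\uwf)=0$.

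I expect the main subtlety to lie precisely in this twist computation, because the diagram produced in Theorem~\ref{thm:linking-n} is non-regular: layers of ribbon are stacked and intermediate pieces are shrunk to zero length, so one must argue carefully that the only crossings between the chosen boundary component and the knot diagram that fail to cancel are the $2n$ crossings at the fold angles of $\tfrac{\pi}{2}$ — exactly the bookkeeping already carried out near the fold-angle-$0$ vertices $C$ and $D$ in the proof of Proposition~\ref{prop:linking-1}, where the stray $+1$ and $-1$ crossings cancel in pairs. Once that accounting is pinned down, reading off $\Wr(\uwf)=0$ from $\Lk=\Tw$ is immediate, and the theorem follows.
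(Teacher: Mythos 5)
Your proposal is correct, and its skeleton coincides with the paper's: the lower bound comes from Theorem~\ref{thm:RibLowerBd}, the upper bound from the construction of Theorem~\ref{thm:linking-n}, and the only remaining work is verifying that this construction has writhe zero. Where you genuinely differ is in that verification. The paper does it directly: since the constructed diagram is non-regular, it invokes the integral definition of writhe, lifts the vertices of $\um$ to heights $0$ and $\pm 1$, and observes that the projection in every non-vertical direction has zero signed crossings, so $\Wr(\uwf)=0$. You argue indirectly via the C\u{a}lug\u{a}reanu--White relation, Theorem~\ref{thm:link-twist-writhe}(1): compute $\Tw(\uwf)=\pm n$ from the $2n$ folds of fold angle $\frac{\pi}{2}$ (each contributing $\pm\frac{1}{2}$, the fold-angle-$0$ folds contributing nothing), and subtract from $\Lk(\uwf)=\pm n$. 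This works, and is in one respect simpler than you anticipate: the ``main subtlety'' you flag --- accounting for the stray crossings between the ribbon boundary and the stacked knot diagram, as at the points $B$ and $C$ in Proposition~\ref{prop:linking-1} --- is irrelevant to the twist, because twist by definition counts only crossings occurring at folds; those stray crossings enter only the computation of $\Lk(\uwf)$, which Theorem~\ref{thm:linking-n} has already carried out. What your route buys is brevity: no explicit 3D lift or projection argument is needed. What it costs is that Theorem~\ref{thm:link-twist-writhe} must be applied to a non-regular, stacked diagram, where the writhe appearing in that identity only makes sense under the same perturbation/integral convention; the paper's direct computation discharges exactly this foundational point, whereas your argument takes the validity of the link--twist--writhe formula in this degenerate setting as given. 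One minor gap to patch: your argument omits the case $n=0$, which Theorem~\ref{thm:linking-n} does not cover (it is stated for $n\in\N$) and which the paper settles separately with the $2$-stick unknot.
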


\begin{proof} When $n=0$, the result is proven by the $2$-stick unknot.
Now consider $n\in\N$.
Theorem~\ref{thm:RibLowerBd} tells us that the folded ribbonlength of such a folded ribbon unknot is bounded from below by $2n$. We give an upper bound  by considering the folded ribbon unknot constructed in Theorem~\ref{thm:linking-n}. This has ribbon linking number $\pm n$, and has folded ribbonlength $2n$. What remains is to show that this folded ribbon unknot has writhe 0. 

Since the construction in Theorem~\ref{thm:linking-n} gives unknots with non-regular diagrams, we compute the writhe using the integral formula, using the same argument given for the $2$-stick unknot in Section~\ref{sect:LkTwWr}. We will refer to the notation given in Theorem~\ref{thm:linking-n}. Assume that $n$  is odd, and suppose the unknot diagram $\um$ has vertices $A$ and $C$ at height 0, vertices $v_1, v_2, \dots, v_n$, at height $-1$, and vertices $v_{n+1},\dots, v_{2n}$ at height $+1$. Then the writhe is zero for all directions except the vertical. Now consider the $n$ even case. Here, vertices $C$ and $D$ are at height $0$, vertices $v_2, v_3, \dots v_{n+1}$ are at height $-1$, and vertices $v_{n+2}, \dots, v_{2n},v_1$ are at height $+1$. Using the same reasoning, we see the writhe is zero.
\end{proof}

It is an interesting question to wonder if there are folded ribbon unknots with nonzero writhe that have smaller folded ribbonlength than Theorem~\ref{thm:writhe0}. We suspect not. Recall from Remark~\ref{rmk:link-compute} that crossings contribute $\pm 1$ to the ribbon linking number. Also note that Proposition~\ref{prop:extended-fold} can be applied to crossings to show they contribute at least $2$ to the folded ribbonlength. This is no better (or worse) than folds and leads us to conjecture.

\begin{conjecture} \label{conj:link-n}
The minimum folded ribbonlength of any folded ribbon unknot $\uwf$ which is a topological annulus with ribbon linking number $\Lk(\uwf)=\pm n$ is $\Rib([\uwf])=2n$.
\end{conjecture}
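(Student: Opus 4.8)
The plan is to establish the matching lower bound $\Rib([\uwf]) \geq 2n$ to complement the upper bound $\Rib([\uwf]) \leq 2n$ already furnished by Corollary~\ref{cor:n-unknot2}; the content of the conjecture is precisely that the writhe-zero hypothesis of Theorem~\ref{thm:writhe0} can be dropped. The governing identity is the annulus case of Theorem~\ref{thm:link-twist-writhe}, namely $\Lk(\uwf) = \Tw(\uwf) + \Wr(\uwf) = \pm n$.

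First I would translate this identity into a count of the combinatorial features of the diagram. Write $f_+, f_-$ for the numbers of folds contributing $+\frac{1}{2}$ and $-\frac{1}{2}$ to the ribbon linking number (by Lemma~\ref{lem:fold-compute} and Remark~\ref{rmk:link-compute}), and $c_+, c_-$ for the numbers of positive and negative crossings of $\um$. Then $\Tw(\uwf) = \frac{1}{2}(f_+ - f_-)$ and $\Wr(\uwf) = c_+ - c_-$, so that $|\frac{1}{2}(f_+ - f_-) + (c_+ - c_-)| = n$. Applying the triangle inequality and bounding each signed difference by the corresponding total count gives
\[
n \leq \tfrac{1}{2}(f_+ + f_-) + (c_+ + c_-) = \tfrac{1}{2}F + C,
\]
where $F = f_+ + f_-$ is the total number of folds and $C = c_+ + c_-$ the total number of crossings. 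Equivalently, $F + 2C \geq 2n$. Next I would convert this feature count into a lower bound on ribbonlength: Corollary~\ref{cor:triangles} shows each fold requires folded ribbonlength at least $1$, and (as noted in the remark following Theorem~\ref{thm:writhe0}) Proposition~\ref{prop:extended-fold} applied at a crossing shows each crossing requires folded ribbonlength at least $2$. If these contributions could be summed, one would obtain $\Rib(\uwf) \geq F \cdot 1 + C \cdot 2 = F + 2C \geq 2n$, and combining with Corollary~\ref{cor:n-unknot2} would finish the proof.

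The hard part -- and the reason this remains a conjecture rather than a corollary -- is justifying that last summation rigorously. The per-fold and per-crossing bounds of Corollary~\ref{cor:triangles} and Proposition~\ref{prop:extended-fold} each measure the ribbonlength of ribbon localized at a single feature, but the ribbon is a single connected strip, and one must rule out that distinct folds and crossings \emph{share} ribbonlength. When a crossing lies close to a fold line (the situation flagged in Section~\ref{sect:linking-number}), or when two features are geometrically adjacent, the pieces of ribbon witnessing the two separate lower bounds can overlap along the strip, so that naively adding the $1$'s and $2$'s overcounts the available length.

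To make the argument honest I would need to produce, for each fold and each crossing, a sub-arc of the knot diagram along which the localized minimal ribbonlength is attained, and then prove these sub-arcs can be chosen with pairwise disjoint interiors; the total length is then at least the sum of the individual contributions, yielding $\Rib(\uwf) \geq F + 2C \geq 2n$. Establishing this disjointness in full generality -- for arbitrary, possibly non-regular and non-convex, annular unknot diagrams with arbitrary writhe -- is the genuine obstacle, since it forces one to control the interaction of the folding geometry with the crossing geometry simultaneously rather than treating each feature in isolation. I expect this separation-of-features step, and not the linking-number bookkeeping, to be where the real work lies.
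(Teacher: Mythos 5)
The first thing to say is that the statement you were asked to prove is Conjecture~\ref{conj:link-n}: the paper does not prove it, and explicitly leaves it open. What the paper does prove is the upper bound $\Rib([\uwf])\leq 2n$ (Corollary~\ref{cor:n-unknot2}) and the exact value $2n$ under the additional hypothesis $\Wr(\uwf)=0$ (Theorem~\ref{thm:writhe0}, whose lower bound comes from Theorem~\ref{thm:RibLowerBd}). The only justification the paper offers for the general statement is the heuristic in the paragraph preceding the conjecture: by Remark~\ref{rmk:link-compute} a crossing contributes $\pm 1$ to $\Lk(\uwf)$ while Proposition~\ref{prop:extended-fold} shows it costs at least $2$ in ribbonlength, the same cost-to-linking ratio as a fold (contribution $\pm\frac{1}{2}$, cost $\geq 1$, by Lemma~\ref{lem:fold-compute} and Corollary~\ref{cor:triangles}). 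Your proposal is precisely a formalization of that heuristic: your identity $\Lk(\uwf) = \frac{1}{2}(f_+-f_-)+(c_+-c_-)$ is the annulus case of Theorem~\ref{thm:link-twist-writhe}, the triangle-inequality step giving $F+2C\geq 2n$ is correct, and your reduction of the conjecture to the summability of the per-feature lower bounds matches the paper's intent exactly.

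Where you stop is also exactly where the paper stops: the inequality $\Rib(\uwf)\geq F\cdot 1 + C\cdot 2$ does not follow from the per-feature bounds alone, because each such bound is witnessed by a sub-arc of the diagram (the fold or extended fold of Definition~\ref{def:extended-fold}, or the two overlapping strips at a crossing), and nothing in the paper forces these witnesses to be disjoint along the strip. The crossing bound is in fact more delicate than you state: the length $\geq 2$ at a crossing is split between two different arcs of the knot (the over- and under-strands), so a disjointness argument must simultaneously control fold--fold, fold--crossing, and crossing--crossing interactions, including the configurations the paper explicitly declines to analyze (a crossing near a fold line, flagged in Section~\ref{sect:linking-number}). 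Note also that the paper's own proof of Theorem~\ref{thm:RibLowerBd} performs this summation for folds alone, in the writhe-zero setting where the linking number comes entirely from twist, and even there the argument is stated quite briskly. So your diagnosis is accurate and your bookkeeping is sound, but the proposal is not a proof: it coincides with, rather than goes beyond, the paper's reasons for believing the conjecture, and the separation-of-features lemma you identify is the genuinely missing ingredient.
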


%%%%%%%%%%%%%%%%%%%%
\subsection{Knots and ribbon linking number}
It is natural to ask how the folded ribbonlength of non-trivial knots is affected by different ribbon linking numbers. We can easily change the ribbon linking number of a folded ribbon knot by adding pairs of folds (with appropriate sign) as in Lemma~\ref{lem:two-folds} and Theorem~\ref{thm:linking-n}. We can then use Theorem~\ref{thm:linking-n} to give an upper bound on the folded ribbonlength of the added folds.

\begin{figure}[htbp]
      \begin{overpic}{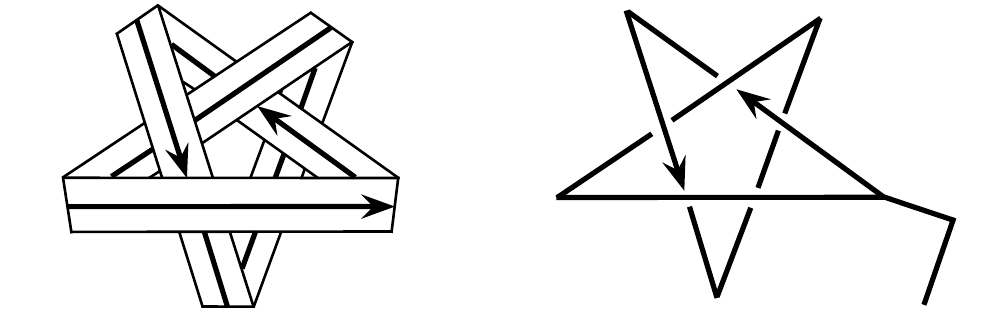}
       \put(40,9){$1$}
       \put(34,29){$2$}
       \put(12,30){$3$}
       \put(4,9){$4$}
       \put(22,-2.5){$5$}
       \put(33, 19){$A$}
       \put(22,26.5){$B$}
       \put(10,18.5){$C$}
       \put(14.5,5){$D$}
       \put(27.5,5){$E$}
       \put(79,7.5){$1'=A'$}
       \put(88,12){$1''=A''$}
       \put(83,29){$2$}
       \put(61,31){$3$}
       \put(53,10){$4$}
       \put(71,-2){$5$}
       \put(96,8.5){$v_1$, $v_2$}
       \put(89,-2){$C$}
       \put(71, 15){$\mk$}
       \put(95, 3){$\um$}
      \end{overpic}
     \caption{On the left, the folded ribbon trefoil knot has ribbon linking number $-7$. On the right, creating a connected sum of the trefoil  $\mk$ with a 4-stick unknot $\um$. The corresponding folded ribbon trefoil knot has linking number $-6$ or $-8$, depending on the choice of folding information at vertices $v_1$ and $v_2$.}
  \label{fig:trefoil-linking}
\end{figure}

As a simple example of this, consider the folded ribbon trefoil knot $\kwf$ shown on the left in Figure~\ref{fig:trefoil-linking}. The folds $1$ and $5$ are left underfolds and have sign $+1$, while folds $2$, $3$, and $4$ are left overfolds and have sign $-1$.  Crossings $A$, $B$, $C$, and $D$ all have sign $-1$, and crossing $E$ has sign $+1$. The folded ribbon trefoil knot is a M\"obius band, and using Remark~\ref{rmk:link-compute} we see that the ribbon linking number $\Lk(\kwf) = +2-3+2(-4+1)=-7$. From \cite{Kauf05, KMRT} we know that the folded ribbonlength of this trefoil when in its ``tight'' pentagonal form (as in Figure~\ref{fig:trefoil-pentagon} right) is $\Rib(\kwf)=5\cot(\frac{\pi}{5})\approx 6.882$. We can change the ribbon linking number of $\kwf$ by creating a new folded ribbon trefoil knot $\kpwf$ which is the connected sum of $\kwf$ with a folded ribbon unknot $\uwf$ with ribbon linking number $n$ as constructed as in Theorem~\ref{thm:linking-n}. To do this, we can break the trefoil knot $\mk$ at one of its vertices, then join the unknot $\um$. As a specific example, on the right in Figure~\ref{fig:trefoil-linking}, we have broken the trefoil knot $\mk$ at vertex $1$. We have also broken a 4-stick unknot $\um$ with vertices $A,v_1,C, v_2$ (from Proposition~\ref{prop:linking-1}) at vertex $A$, then created the connected sum $\mk'=\mk+\um$.  As seen in Proposition ~\ref{prop:linking-1}, the corresponding folded ribbon unknot $\uwf$ has ribbon linking number $\pm 1$ depending on the folding information at vertices $v_1$ and $v_2$. Since $\um$ has an even number of sticks, the connected sum $\kpwf$ is still a M\"obius band. We can carefully arrange the folds and crossings at vertices $1'=A'$ and $1''=A''$ of  the new folded ribbon knot $\kpwf$ so that $\kpwf$ has
\begin{compactitem}
\item ribbon linking number $\Lk(\kpwf)=-6$ or $-8$, and 
\item folded ribbonlength $\Rib(\kpwf)\approx 5\cot(\frac{\pi}{5}) + 2$. (We may need to extended edges $A'v_1$ and $v_2A''$ to account for the new folds at vertices $1'=A'$ and $1''=A''$.)
\end{compactitem}
Since the connected sum has $5+4=9$ edges, it is entirely possible that a different configuration has smaller folded ribbonlength.  This example motivates the following theorem.

\begin{theorem}\label{thm:knot+n}
Suppose a knot has a polygonal knot diagram $\mk$, and assume the corresponding folded ribbon knot $\kwf$ has ribbon linking number $\Lk(\kwf)=m$ and folded ribbonlength $\Rib(\kwf)=M$. Then there is a polygonal knot diagram $\mk'$ such that $\mk$ and $\mk'$ have the same knot type, and for any $n\in\Z$ the corresponding folded ribbon knot $\kpwf$ has 
\begin{compactenum}
\item the same topological type as $\kwf$,
\item ribbon linking number $\Lk(\kpwf)=m+n$.
\end{compactenum}
The minimum folded ribbonlength is bounded from above by $\Rib([\kpwf])\leq M+2|n| + L$, where
$$ L=\begin{cases} 2\cot(\frac{\alpha}{4}) - \cot(\frac{\alpha}{2}) & \text{when $\alpha$ is acute},\\ 2\cot(\frac{\alpha}{4}) - \cot(\frac{\pi}{2}-\frac{\alpha}{2}) & \text{when $\alpha$ is obtuse}.
\end{cases}
$$
Here, $\alpha$ is the fold angle of the vertex of $\mk$ used to create $\mk'$.
\end{theorem}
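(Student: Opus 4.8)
The plan is to generalize the trefoil computation that precedes the statement. Choose a vertex $v$ of $\mk$ with fold angle $\alpha$ (so $0<\alpha<\pi$), and form the connected sum $\mk' = \mk \# \um$, where $\um$ is the folded ribbon unknot produced by Theorem~\ref{thm:linking-n}: a topological annulus with $2|n|+2$ sticks, ribbon linking number $\pm n$, and folded ribbonlength $2|n|$. Concretely, I would break $\mk$ at $v$, splitting it into two nearby vertices $v'$ and $v''$, break $\um$ at one of its vertices, and splice the resulting arcs together so that the inserted $\um$ sits in a small neighbourhood of $v$. For $n<0$ I would use the mirror of $\um$ (swap every overfold and underfold, as in Theorem~\ref{thm:linking-n}), and for $n=0$ no insertion is needed; thus a single family of constructions handles all $n\in\Z$.

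Two of the three conclusions are then essentially formal. Since $\um$ is an unknot, $\mk\#\um$ has the same knot type as $\mk$, which gives the first claim. For the topological type I would invoke the parity criterion recalled in Section~\ref{sect:ribbon-equivalence}: the ribbon is an annulus or a M\"obius band according as the number of sticks is even or odd. Because $\um$ contributes $2|n|+2$ sticks, an even number, the connected sum has the same stick-count parity as $\mk$, hence $\kpwf$ has the same topological type as $\kwf$.

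For the ribbon linking number I would track signs through the splice using Lemma~\ref{lem:fold-compute}, exactly as in the trefoil example. The $2|n|$ folds of the inserted $\um$ all carry the same sign and are arranged, via the choice of folding information, to contribute $n$; the two folds created at the splice point $v',v''$ are then chosen, together with the over/under data there, to reproduce the contribution of the deleted fold at $v$, so that no net change is introduced beyond the inserted unknot. This gives $\Lk(\kpwf)=m+n$. I expect this bookkeeping to be the main obstacle. The delicate point is that the per-fold contribution to the linking number depends on the number of boundary components of the \emph{whole} ribbon (Remark~\ref{rmk:link-compute}): a fold of the standalone annulus $\um$ contributes $\pm\tfrac12$, but once $\um$ is spliced into a ribbon $\kpwf$ that may be a M\"obius band its folds are traversed by the single boundary and contribute $\pm 1$. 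One must therefore verify — by following the boundary through the splice — that the signs of the inserted folds and of the two splice folds combine to change the linking number by exactly $n$ in the actual topological type of $\kpwf$, with the splice folds supplying the freedom to absorb any discrepancy. The annulus case is transparent; the M\"obius case is where the careful arrangement really earns its keep.

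Finally, the ribbonlength bound is a direct computation with Proposition~\ref{prop:extended-fold}. Keeping the configuration of $\mk$ fixed away from $v$, the splice deletes the single extended fold at $v$, whose ribbonlength is $\cot(\tfrac{\alpha}{2})$ when $\alpha$ is acute and $\cot(\tfrac{\pi}{2}-\tfrac{\alpha}{2})$ when $\alpha$ is obtuse, and replaces it by two extended folds of angle $\tfrac{\alpha}{2}$. Since $0<\alpha<\pi$ forces $\tfrac{\alpha}{2}<\tfrac{\pi}{2}$, each new fold is acute with ribbonlength $\cot(\tfrac{\alpha}{4})$, for a total of $2\cot(\tfrac{\alpha}{4})$; the inserted unknot contributes its $2|n|$ folds, each of ribbonlength $1$ by Corollary~\ref{cor:triangles}. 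Hence the constructed $\mk'$ realizes folded ribbonlength $M+2|n|+L$ with $L$ the stated difference $2\cot(\tfrac{\alpha}{4})-(\text{deleted extended fold})$, and since $\Rib([\kpwf])$ is an infimum, $\Rib([\kpwf])\le M+2|n|+L$. The one point needing care is that the splice can be localized at $v$ without forcing new crossings or more than an arbitrarily small lengthening of the remaining edges, so that the displayed value is attained in the limit.
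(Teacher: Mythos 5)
You follow the same route as the paper: cut $\mk$ at a vertex of fold angle $\alpha$, splice in the unknot $\um$ of Theorem~\ref{thm:linking-n}, get the knot type from the connected sum with an unknot, the topological type from the parity of the stick count, and the ribbonlength bound from Proposition~\ref{prop:extended-fold} and Corollary~\ref{cor:triangles}. One geometric slip in the ribbonlength step: the paper chooses the vertex $w_k$ on the convex hull of $\mk$ and attaches $\um$ along the outward bisector, perpendicular to the fold line at $w_k$, so the two new fold angles are $\theta=\pi-\frac{\alpha}{2}$ (obtuse), not $\frac{\alpha}{2}$ as you claim. Your value corresponds to sending $\um$ inward, which is exactly what would produce the unwanted crossings you defer to the end of your proposal; the outward, convex-hull placement is how the paper guarantees there are none, with no limiting argument needed. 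The slip is numerically invisible, since by Proposition~\ref{prop:extended-fold} supplementary fold angles have equal extended-fold ribbonlength, namely $\cot(\frac{\alpha}{4})$, so your formula for $L$ coincides with the paper's.

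The genuine gap is the linking-number step, precisely where you flagged it, and your proposed repair cannot work. If $\kwf$ (hence $\kpwf$) is a M\"obius band, each of the $2|n|$ same-signed folds of the inserted $\um$ contributes $\pm 1$ rather than $\pm\frac{1}{2}$ (Remark~\ref{rmk:link-compute}), so the insert shifts the linking number by $\pm 2n$; the resulting discrepancy has size $|n|$, is unbounded, and cannot be ``absorbed'' by the two splice folds, which are already committed (in your sketch as in the paper) to reproducing the contribution of the deleted fold at $w_k$. Concretely, take $\mk$ an equilateral triangle with all left underfolds ($m=3$) and $n=1$: after the splice, the six nonzero-angle folds each contribute $+1$ and the two splice crossings contribute $-\frac{1}{2}$ each, giving $\Lk(\kpwf)=5=m+2n$ rather than $m+n=4$. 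You should also be aware that the paper's own proof verifies only the splice-local matching (its Cases 1(a) and 1(b), in both topological types) and then asserts $\Lk(\kpwf)=\Lk(\kwf)+\Lk(\uwf)$, which silently assumes the inserted folds contribute what they contributed in the standalone annulus; so the subtlety you identified is real, and neither your sketch nor the paper's argument disposes of it in the M\"obius case. The annulus case of your proposal is complete once the outward placement is specified; for the M\"obius case one needs a modified insert (enough same-signed folds counted at $\pm 1$ each, with stick-count parity preserved by zero-angle folds) or a separate argument.
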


\begin{proof}  Theorem~\ref{thm:linking-n} says there is a $(2n+2)$-stick unknot diagram $\um$ whose corresponding folded ribbon unknot $\uwf$ has ribbon linking number $\Lk(\uwf)=n$ and folded ribbonlength $\Rib(\uwf)=2|n|$. In this proof, we will carefully construct the connected sum $\mk'=\mk\#\um$ so that the corresponding folded ribbon knot $\kpwf$ has the desired properties. We first observe that since $\um$ is an unknot, then $\mk'$ is of the same knot type as $\mk$. We next assume that $\kwf$ and $\uwf$ have been appropriately scaled so they have the same ribbon width. 

To get started with our connected sum, we choose a vertex $w_k$ of $\mk$ which is in the convex hull of the knot diagram $\mk$. We cut the knot diagram at $w_k$ creating two points $w_k'$ and $w_k''$. We assume that the orientation of the (broken) knot diagram is $\dots w_{k-1}, w_k', w_k'', w_{k+1}\dots$.  On the right in Figure~\ref{fig:connect-sum-over}, we see vertex $w_k$ in the knot diagram for $\mk$. The dotted line $\ell$ indicates the position of the fold line of the corresponding folded ribbon $\uwf$, and without loss of generality, we have assumed $\mk$ turns to the left at $w_k$.

\begin{figure}[htbp]
      \begin{overpic}{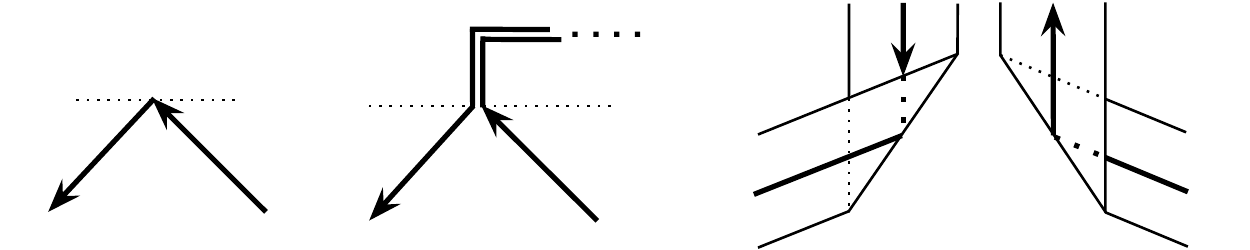}
       \put(11,13){$w_k$}
       \put(2,1.5){$w_{k+1}$}
       \put(19,1.5){$w_{k-1}$}
       \put(11, 9){$\alpha$}
       \put(4,7){$\mk$}
       \put(4,11.5){$\ell$}
       \put(42,9){$w_k'=A'$}
       \put(23,9){$A''=w_k''$}
       \put(39.5,14.5){$v_1$}
       \put(33.5,18.5){$v_{2n}$}
       \put(37,6){$\mk$}
       \put(45,18){$\um$}
       \put(32,0){$\mk'=\mk\#\um$}
       \put(79.5,9){$w_k'$}
       \put(73.5,9){$w_k''$}
       \put(65,7.75){$A$}
       \put(69,13.75){$B$}
       \put(85,14){$C$}
       \put(89,7.75){$D$}
   
       \end{overpic}
       \caption{ On the left and center, we see the detail of the join in the connected sum between $\mk$ and $\um$ from Theorem~\ref{thm:knot+n}. On the right, we see the new folds at vertices $w_k'$ and $w_k''$ as discussed in Case~1(a).}
    \label{fig:connect-sum-over}
\end{figure}

{\bf Case~1:} Assume $n$ is odd. Take the unknot diagram $\um$ with $(2n+2)$ edges constructed in Case~1 of the proof of Theorem~\ref{thm:linking-n}. Then cut $\um$ at vertex $A$, creating two vertices labeled $A'$ and $A''$. The orientation of the (broken) unknot is $A',v_1, \dots,v_n, C, v_{n+1},\dots v_{2n},  A''$.  

To create the connected sum of $\mk$ and $\um$, we join $w_k'$ to $A'$ and $w_k''$ to $A''$ and relabel the vertices $w_k'$ and $w_k''$. This creates the knot $\mk'=\mk\#\um$. The join between $\um$ and $\mk$ is shown in the middle in Figure~\ref{fig:connect-sum-over}, where we have moved vertices $v_1$ and $v_{2n}$ apart for clarity. When we make the connected sum, we arrange the line segments $A'v_1$ and $v_{2n}A''$ to be perpendicular to the fold line at $w_k$.  By construction,  the unknot diagram lies on one side of the line $\ell$,  and so $\um$ lies in the exterior of the convex hull of $\mk$.   We assumed that $\mk$ turned to the left at $w_k$. This means as we traverse $\mk'$, we turn to the right at $w_k'$ and at $w_k''$. If necessary, we can extend the length of edges containing $w_k'$ and $w_k''$ to allow room for the new folds at $w_k'$ and $w_k''$. We discuss these details at the end of the proof.

{\em How does the number of edges change?} If the knot diagram $\mk$ has $p$ edges, then $\mk'=\mk\#\um$ has $p+(2n+2)$ edges. This means the corresponding folded ribbon knot $\kpwf$ is the same topological type as $\kwf$. 

{\em How does the ribbon linking number change?}  By construction, the only places that $\uwf$ and $\kwf$ have changed or interact with one another are are in a neighborhood of $w_k'$ and $w_k''$. Since the fold at vertex $A$ of $\uwf$ has fold angle $0$, this fold does not contribute to the ribbon linking number of $\uwf$.  
However, the fold at vertex $w_k$ does contribute to the ribbon linking number of $\kwf$.  We will arrange the folds and crossings at $w_k'$ and $w_k''$ so their contributions to the ribbon linking number match. There are two sub-cases to consider here.

{\bf Case~1(a)} Assume that in $\kwf$, there is a left underfold at $w_k$ which has sign $+1$.    When making the connected sum, we replace the left underfold at $w_k$ with two right overfolds at $w_k'$ and $w_k''$, and both of these have sign $+1$. This is illustrated on the right in Figure~\ref{fig:connect-sum-over}, where we have separated the folded ribbons at $w_k'$ and $w_k''$ to make it easier to see what is going on. When $w_k$ has a left underfold, we choose that vertex $w_k''$ is over $w_k'$, and that the unknot follows (so vertex $v_{2n}$ is over vertex $v_1$, etc). In this set up, there are two other places that where the knot diagram $\mk'$ interacts with the boundary of the folded ribbon $\kpwf$. The first is where the knot diagram at point $C$ crosses under the ribbon boundary at point $B$. The second is where the ribbon boundary at point $C$ crosses under the knot diagram at point $B$.  Both of these crossings have sign $-1$.  We note that the piece of folded ribbon at point $A$ does not interact with the piece of folded ribbon at point $D$.

When $\kwf$ (and hence $\kpwf$) is a topological annulus, then the folded ribbon has two boundary components.  The two right overfolds at $w_k'$ and $w_k''$ each contribute $+\frac{1}{2}$ to $\Lk(\kpwf)$. By following one boundary component, we see just one of the crossings at the points $B$ and $C$ between the knot diagram and the ribbon's boundary contribute $-\frac{1}{2}$ to $\Lk(\kpwf)$. Altogether, this matches the contribution to the ribbon linking number of $+\frac{1}{2}$ to $\Lk(\kwf)$ by the left underfold at vertex $w_k$.

When $\kwf$ (and hence $\kpwf$) is a M\"obius strip, then the folded ribbon has just one boundary component.  The two right overfolds at $w_k'$ and $w_k''$ each contribute  $+1$ to $\Lk(\kpwf)$, and both of the crossings at the points $B$ and $C$ between the knot diagram and the ribbon's boundary contribute $-\frac{1}{2}$ to $\Lk(\kpwf)$. Altogether, this matches the contribution of $+1$ to $\Lk(\kwf)$ by the left underfold at vertex $w_k$.

 {\bf Case~1(b)} Assume that in $\kwf$, there is a left overfold at $w_k$ which has sign $-1$. When making the connected sum, we replace the left overfold at $w_k$ with two right underfolds at $w_k'$ and $w_k''$, which both have sign $-1$. Next,  we assume that $w_k'$ is over $w_k''$, and this carries over to the unknot $\um$.  This means $v_1$ is over $v_{2n}$, etc. As in Case~1(a), the knot diagram $\mk'$ interacts with the boundary of the folded ribbon $\kpwf$ at the equivalent of points $B$ and $C$ on the right of Figure~\ref{fig:connect-sum-over}. Our assumption that $w_k'$ is over $w_k''$ means that these two crossings have sign $+1$. We follow the same argument as in Case~1(a) to deduce that the local contributions to the ribbon linking number are identical.
In both cases 1(a) and 1(b), we see that
$$\Lk(\kpwf)=\Lk(\kwf)+\Lk(\uwf) = m+n.$$

{\bf Case~2:} Assume $n$ is even. We construct the connected sum between $\mk$ and $\um$ in almost the same way as in Case~1. Take the unknot diagram $\um$ with $(2n+2)$ edges constructed in Case~2 of the proof of Theorem~\ref{thm:linking-n}. In this case, we cut $\um$ at vertex $C$ creating two vertices labeled $C'$ and $C''$. We assume that order of vertices as we travel along the (broken) unknot is $C', v_2,\dots,v_{n+1}, D, v_{n+2}, \dots, v_{2n}, v_1, C''$. To create the connected sum of $\mk$ and $\um$, we join $C'$ to $w_k'$, and $C''$ to $w_k''$.   The argument then follows in an identical fashion to the Case~1, with two sub-cases depending on whether there is a left underfold or left overfold at vertex $w_k$.

{\em How does the ribbonlength change?} When making the connected sum, the ribbonlength of $\uwf$ and $\kwf$ only increases if we need to extend the edges adjacent to vertices $w_k'$ and $w_k''$ to create room for the new folds.  What do we mean? Using the language and formulas from Proposition~\ref{prop:extended-fold}, we replace the extended fold at $w_k$ with two new extended folds at $w_k'$ and $w_k''$.  Suppose the fold angle at $w_k$ is $\alpha$, then the new fold angles at $w_k'$ and $w_k''$ are both $\theta=\pi-\frac{\alpha}{2}$. Since $0<\alpha< \pi$, then $\frac{\pi}{2}<\theta<\pi$. It's also useful to note that $\frac{\pi}{2}-\frac{\theta}{2} = \frac{\pi}{2}-\frac{1}{2}(\pi-\frac{\alpha}{2}) = \frac{\alpha}{4}$.

 If $\alpha$ is acute, then the extended fold at $w_k$ has folded ribbonlength $\cot(\frac{\alpha}{2})$. 
 If $\alpha$ is obtuse, then the extended fold at $w_k$ has folded ribbonlength $\cot(\frac{\pi}{2}-\frac{\alpha}{2})$.
In either case, the new extended folds at $w_k'$ and $w_k''$  have folded ribbonlength $2\cot(\frac{\pi}{2}-\frac{\theta}{2})= 2\cot(\frac{\alpha}{4})$.
We let $L$ represent the change in ribbonlength, and we obtain
$$ L=\begin{cases} 2\cot(\frac{\alpha}{4}) - \cot(\frac{\alpha}{2}) & \text{when $\alpha$ is acute},\\ 2\cot(\frac{\alpha}{4}) - \cot(\frac{\pi}{2}-\frac{\alpha}{2}) & \text{when $\alpha$ is obtuse}.
\end{cases}
$$
This means that 
$$\Rib(\kpwf)\leq \Rib(\uwf)+\Rib(\kwf)+L\leq M+2|n| + L.$$
\end{proof}

\begin{remark} 
We do not expect that the upper bound in Theorem~\ref{thm:knot+n} to be sharp. Since the number of edges in the new knot $\mk'$ is greater than the number of edges in $\mk$, we expect the flexibility given by the extra edges means that the folded ribbonlength of $\kpwf$ will be much smaller than that of $\kwf$. 

The simplest example of this phenomenon is found in the work of Kennedy {\em et al.} \cite{KMRT}. They give two examples of a $(5,2)$ torus knot. One is made with 5 edges, and the other is made with 7 edges, and the latter has smaller folded ribbonlength than the former. In \cite{DKTZ}, we show that the $(5,2)$ torus knot made with 7 edges differs to the one made with 5 edges by one full twist. This means the ribbon linking number differs by $\pm 1$.  

As another example, consider $(2,p)$ torus knots. Kennedy {\em et al.} show that for $p\geq 7$ there are $(2,p)$ torus knots with $p$ edges and with folded ribbonlength $p\cot(\frac{p}{2})$. In \cite{Den-FRF}, we give two different constructions. The first uses $2p+2$ edges to construct a $(2,p)$ torus knot, the second uses $4p$ edges to construct a $(p,2)$ torus knot. However, both constructions bound the folded ribbonlength above by $2p$, which is much smaller than $p\cot(\frac{p}{2})$. We did not try and compute the ribbon linking number of any of these constructions of folded ribbon $(2,p)$ torus knots. Instead, these examples show that adding in extra edges allowed us to greatly decrease the folded ribbonlength.
\end{remark}

%%%%%%%%%%%%%%%%%%%%%%%
\subsection{Summary}
In this paper we have looked at one problem: finding upper and lower bounds on the minimum folded ribbonlength of folded ribbon unknots with respect to ribbon equivalence. We have given upper bounds on the minimum folded ribbonlength for folded ribbon unknots which are topological annuli and M\"obius bands for any ribbon linking number. We proved that the minimum folded ribbonlength for $3$-stick unknots occurs when the unknot is an equilateral triangle. We generalized this proof and showed that for folded ribbon unknots whose diagrams have obtuse fold angles, the minimum folded ribbonlength occurs when the unknot is a regular $n$-gon. Finally, we proved the minimum folded ribbonlength is $2n$ for folded ribbon unknots which are topological annuli, have writhe 0, and with ribbon linking number $\pm n$.  This work is far from complete. Along the way we made many conjectures. For example, what about folded ribbon unknots with nonzero writhe? Finally, very little, if anything, is known about the folded ribbonlength of non-trivial folded ribbon knots with any given ribbon linking number.

%%%%%%%%%%%%%%%%%%%%%%%%
%%%%%%%%%%%%%%%%%%%%%%%%

\section{Acknowledgments}
We wish to thank Jason Cantarella for many helpful conversations about folded ribbon knots.

Denne's research has been funded over many summers by Lenfest Grants from Washington \& Lee University, most recently in 2018, 2019, 2020, and 2022. Larsen's research was funded by Washington \& Lee Summer Research Scholars program in 2020.

The work on folded ribbon knots has been developed with Denne's undergraduate students over many years. We wish to thank the students who have contributed to our understanding of folded ribbon knots.
\begin{compactitem}
\item Shivani Aryal and Shorena Kalandarishvili: funded by Smith College's 2009 Summer Undergraduate Research Fellowship program.
\item Eleanor Conley, Emily Meehan and Rebecca Terry: funded by the Center for Women in Mathematics at Smith College Fall 2011, which was funded by NSF grant DMS 0611020. 
\item Mary Kamp and Catherine (Xichen) Zhu: funded by 2015 Washington \& Lee Summer Research Scholars program.
\item Corinne Joireman and Allison Young: funded by 2018 Washington \& Lee Summer Research Scholars program.
\item John Carr Haden and Troy Larsen: funded by 2020 Washington \& Lee Summer Research Scholars program.
\end{compactitem}

%%%%%%%%%%%%%%%%%%%%%%%%%%%%%%%%%
%%%%%%%%%%%%%%%%%%%%%%%%%%%%%%%%%

\bibliography{folded-ribbons}{}
\bibliographystyle{plain}

%%%%%%%%%%%%%%%%%%%%%%%%%%%%%%%%%
%%%%%%%%%%%%%%%%%%%%%%%%%%%%%%%%%
\appendix
\section{Detail of Proposition~\ref{prop:n-linking} }\label{append:n-linking}
In this section, we summarize the details in the proof of Proposition~\ref{prop:n-linking} (which proved that ribbon linking number of the corresponding folded ribbon unknot $\uwf$ is a complete invariant). Here, we assume that $\um$ be a non-degenerate convex $n$-gon for $n\geq 3$.  Proposition~\ref{prop:n-linking} proved that the only possible ribbon linking numbers are as follows:
\begin{compactenum}
\item when $n\geq 3$ and is odd,  then $\Lk(\uwf) = \pm 1, \pm 3, \dots , \pm n$;
\item when $n\geq 4$ and is even, then $\Lk(\uwf) = 0, \pm 1, \pm 2, \dots , \pm\frac{n}{2}$.
\end{compactenum}
Moreover, up to permutation among the vertices, the folding information determines the ribbon linking number and the ribbon linking number determines the folding information.  We now state precisely what this last sentence means. In both of the following cases, we assume that $\um$ is oriented in the counterclockwise direction.

Assume that $n\geq 3$ is odd. Then
\begin{itemize}
\item  $Lk(\uwf)= + n$ if and only if there are $n$ underfolds,
\item    $Lk(\uwf)= + (n-2)$ if and only if there are $n-1$ underfolds and $1$ overfold,
 \\
 \vdots

\item  $Lk(\uwf)= +1$ if and only if there are $\lceil\frac{n}{2}\rceil$ 
underfolds and $\lfloor\frac{n}{2}\rfloor$ overfolds,
\item  $Lk(\uwf)= -1$ if and only if there are $\lfloor\frac{n}{2}\rfloor$ underfolds and $\lceil\frac{n}{2}\rceil$ 
overfolds,
\\
\vdots
\item  $Lk(\uwf)= - (n-2)$ if and only if there are  $n-1$ overfolds and $1$ underfold,
\item $Lk(\uwf)= - n$ if and only if there are $n$ overfolds. 
\end{itemize}

Assume that $n\geq 4$ is even. Then
\begin{itemize}
\item  $Lk(\uwf)= + \frac{n}{2}$ if and only if there are $n$ underfolds,
\item    $Lk(\uwf)= + (\frac{n}{2}-1)$ if and only if there are $n-1$ underfolds and 1 overfold,
 \\
 \vdots

\item  $Lk(\uwf)= +1$ if and only if there are $\frac{n}{2}+1$ underfolds and $\frac{n}{2}-1$ overfolds,
\item $Lk(\uwf)= 0$ if and only if there are $\frac{n}{2}$ underfolds and $\frac{n}{2}$ overfolds,
\item  $Lk(\uwf)= -1$ if and only if there are $\frac{n}{2}-1$ underfolds and $\frac{n}{2}+1$ overfolds,\\
\vdots
\item  $Lk(\uwf)= - (\frac{n}{2}-1)$ if and only if there are $n-1$ overfolds and $1$ underfold,
\item $Lk(\uwf)= - \frac{n}{2}$ if and only if there are $n$ overfolds. 
\end{itemize}

%%%%%%%%%%%%%%%%%%%%%%%%%%%%%%%%%
%%%%%%%%%%%%%%%%%%%%%%%%%%%%%%%%%

\section{Proof of Proposition~\ref{prop:equiangular}}\label{append:equiangular}

In this section we prove Proposition~\ref{prop:equiangular}:
Suppose $\um$ is a non-degenerate $n$-gon for $n\geq 4$ whose  interior angles satisfy $\frac{\pi}{2}\leq \alpha_i<\pi$. Then the folded ribbon unknot $\uwf$ has ribbon width bounded from above by $w\leq \frac{1}{n\cot(\frac{\pi}{n})}$, and $w$ achieves its maximum when $\um$ is equiangular.

Just as with Theorem~\ref{thm:equilateral-3-stick}, we need to set up notation to properly describe $\um$ and its corresponding folded ribbon knot $\uw$. Referring to Figure~\ref{fig:obtuse-angles1}, we let  $\um$ have vertices denoted by $v_1, v_2, \dots, v_n$, and corresponding interior angles (the fold angles) denoted by $\alpha_1,\alpha_2,\dots, \alpha_n$. For a ribbon of width $w$, denote the fold lines at each vertex  $v_i$ by $v_i'v_i''$. Without loss of generality, we encounter $v_1', v_1, v_1''$; $v_2', v_2, v_2''$; \dots; $v_n', v_n, v_n''$ when we traverse $\um$ in a counterclockwise direction. As before, we extend the fold lines until they intersect, labeling the point of intersection of the extended fold lines at $v_i$ and $v_{i+1}$ by $w_{i,i+1}$. The exterior boundary of the folded ribbon consists of lines $v_1''v_2', v_2''v_3', \dots, v_n''v_1'$, and we denote the lengths of these lines by  $x_1=|v_1''v_2'|,$ $x_2=|v_2''v_3'|$, \dots, $x_n=|v_n''v_1'|$.   
 As we proved in Lemma~\ref{lem:fold-meet},  the maximum ribbon width occurs when the fold lines intersect in the exterior region of the knot diagram. That is, when one of $x_1,\dots x_n$ is $0$. We can use this to work out which angles $\alpha_1,\dots,\alpha_n$ give the maximum width.

\begin{figure}[htbp]
    \begin{overpic}{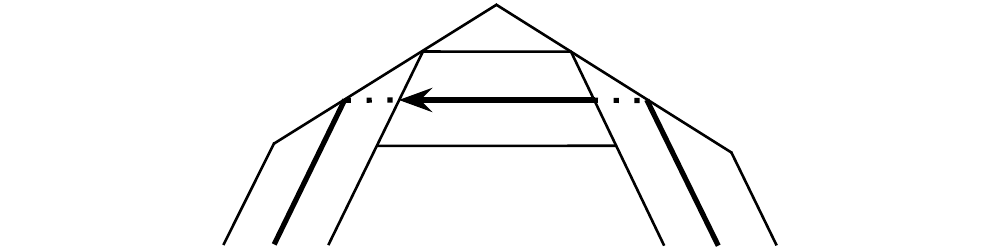}
     \put(73,10){$v_1'$}
     \put(65,15){$v_1$}
     \put(57.5,20){$v_1''$}
     \put(61,12.5){$\alpha_1$}
     \put(49,17){$x_1$}
     \put(48, 25){$w_{1,2}$}
     \put(23,10){$v_2''$}
     \put(31,16){$v_2$}
     \put(38.5,21){$v_2'$}
     \put(34.5,12.5){$\alpha_2$}
     \put(30.5, 4){$x_2$}
     \put(65.5,4){$x_n$}
    \end{overpic}
    \caption{Part of a folded ribbon $n$-stick unknot showing the labels of the vertices, angles, fold lines, and lengths.}
    \label{fig:obtuse-angles1}
\end{figure}

\begin{proof} We start by using the same geometric argument from Theorem~\ref{thm:equilateral-3-stick} to see that 
$$ x_i =|v_iv_{i+1}| - \frac{w}{2} \tan\frac{\alpha_i}{2}-\frac{w}{2} \tan\frac{\alpha_{i+1}}{2} < |v_iv_{i+1}|, $$ where the indices are taken mod $n$
We also assume that the length of $\um$ is fixed and that $|v_1v_2| + |v_2v_3|+ \dots +|v_nv_1| =1$. This allows us to deduce that $0\leq x_1+\dots +x_n <1$.  As in Theorem~\ref{thm:equilateral-3-stick}, we substitute and rearrange, to see that
\begin{equation}
   0\le \sum_{i=1}^n \tan(\frac{\alpha_i}{2})\le\frac{1}{w}. 
 \label{eq:n-inequality}
\end{equation}

Thus in order to maximize $w$, we need to minimize $\sum_{i=1}^n \tan(\frac{\alpha_i}{2})$ remembering that 
the sum of the interior angles of an $n$-gon is given by $\pi(n-2)$. This means we use the method of Lagrange multipliers to minimize the function $f(\alpha_1,\dots, \alpha_n)= \sum_{i=1}^n \tan(\frac{\alpha_i}{2}) $ subject to the constraint function $g(\alpha_1,\dots, \alpha_n)=\alpha_1+\alpha_2+\dots +\alpha_n- \pi(n-2)=0 $. 
We first set the gradient Lagrangian function $\mathcal{L}(\alpha_1,\dots,\alpha_n, \lambda)=f(\alpha_1,\dots,\alpha_n)-\lambda g(\alpha_1,\dots, \alpha_n$ equal to $0$. That is,
\begin{align*}
    0&=\nabla \sum_{i=1}^n \tan(\frac{\alpha_i}{2}) - \lambda\nabla(-\pi(n-2) + \sum_{i=1}^n \alpha_i ),\\
    0&=\langle\frac{1}{2}\sec^2(\frac{\alpha_1}{2}),\dots, \frac{1}{2}\sec^2(\frac{\alpha_n}{2})\rangle-\lambda\langle1, \dots,  1\rangle.
\end{align*}
We next solve the following system of equations:
\begin{align}
\lambda &=\frac{1}{2}\sec^2(\frac{\alpha_1}{2}), \label{eq:n-1} \\
&\vdots \\
\lambda &= \frac{1}{2}\sec^2(\frac{\alpha_n}{2}),\label{eq:n-n} \\
\alpha_1+\dots + \alpha_n &=\pi(n-2).\label{eq:n-sum}
\end{align}
From the $n$ Equations~\ref{eq:n-1} through \ref{eq:n-n}, we see
$$\sec^2(\frac{\alpha_1}{2})=\sec^2(\frac{\alpha_2}{2})=\dots=\sec^2(\frac{\alpha_n}{2}),$$
or
$$\cos^2(\frac{\alpha_1}{2})=\cos^2(\frac{\alpha_2}{2})=\dots=\cos^2(\frac{\alpha_n}{2}).$$
This implies that 
\begin{equation} \cos(\frac{\alpha_1}{2})=\pm\cos(\frac{\alpha_2}{2})=\dots=\pm\cos(\frac{\alpha_n}{2}).
\label{eq:cases}
\end{equation}
Recall that we assumed that $\frac{\pi}{2}\leq \alpha_i<\pi$. This means that if $\cos\alpha_i = \cos\alpha_j$ then $\alpha_i=\alpha_j$. In addition, we have $-\cos(\theta)=\cos(\pi-\theta)$. In order to solve Equations~\ref{eq:n-sum} and \ref{eq:cases}, we see there are a number of cases to consider. Without loss of generality, we reduce these cases to just two.

\begin{enumerate}
    \item[Case~1.] Assume that $\cos(\frac{\alpha_1}{2})=\dots=\cos(\frac{\alpha_n}{2})$, then $\alpha_1=\dots = \alpha_n$. Combining this with Equation~\ref{eq:n-sum} gives
      \begin{align*}
    \pi(n-2) &=\alpha_1+\alpha_2+\dots + \alpha_n=n\alpha_1,   \\
    \pi(\frac{n-2}{n}) &=\alpha_1=\alpha_2=\dots = \alpha_n.
    \end{align*}
    \item[Case~2.] Without loss of generality, assume that  $-\cos\Ai = \cos\Aii = \dots =\cos(\frac{\alpha_n}{2}) $. Then rewriting, 
    $$\cos(\pi-\frac{\alpha_1}{2}) = \cos\Aii =  \dots =\cos(\frac{\alpha_n}{2}).$$ 
    We then see that $2\pi-\alpha_1=\alpha_2=\dots = \alpha_n$, and so $2\pi=\alpha_1+\alpha_2$. However, we assumed that $0<\alpha_1,\alpha_2<\pi$, which means $\alpha_1+\alpha_2<2\pi$ and so we have the desired contradiction.
\end{enumerate}

In summary, we have found that $ \pi(\frac{n-2}{n}) = \pi(1 - \frac{2}{n}) =\alpha_1=\alpha_2=\dots = \alpha_n$ is the only extreme value of our function $f$ subject to the constraint $g$. To check whether this solution yields a minimum or maximum value for $f$, we compare the values of $f$ at a nearby point. We first see that 
$$f( \pi(1 - \frac{2}{n}), \dots,  \pi(1 - \frac{2}{n})) = n\tan(\frac{\pi}{2} - \frac{\pi}{n}) =n\cot(\frac{\pi}{n}).$$
Second, we note that as $\alpha_i\rightarrow \pi$, then $\tan(\frac{\alpha_i}{2})\rightarrow\infty$. Third, we set $\alpha_1=\pi-\epsilon$ for $\epsilon <\frac{2}{n}$ and set $\alpha_2=\dots=\alpha_n=\frac{1}{n-1}[(n-2)\pi-(\pi-\epsilon)]$. Then as $\epsilon\rightarrow 0$, we see that $f(\alpha_1,\dots,\alpha_n)$ can be made as large as we like. Finally, this means that 
that the function $f$ is minimized when  $\pi(1 - \frac{2}{n}) =\alpha_1=\alpha_2=\dots = \alpha_n$, and in this case, $\um$ is an equiangular $n$-gon.   

When we substitute this result into Equation~\ref{eq:n-inequality} and rearrange, we find the maximum ribbon width:
$$   0\le n\cot(\frac{\pi}{n}) \le\frac{1}{w} \quad \text{hence} \quad w\leq\frac{1}{n\cot(\frac{\pi}{n})}.$$
\end{proof}

\end{document}